\title[Torelli group, Johnson kernel and 
invariants of homology spheres]
{
Torelli group, Johnson kernel and 
invariants of homology spheres}
\author{Shigeyuki Morita}
\address{Graduate School of Mathematical Sciences, 
The University of Tokyo, 
3-8-1 Komaba, 
Meguro-ku, Tokyo, 153-8914, Japan}
\email{morita@ms.u-tokyo.ac.jp}
\author{Takuya Sakasai}
\address{Graduate School of Mathematical Sciences, 
The University of Tokyo, 
3-8-1 Komaba, 
Meguro-ku, Tokyo, 153-8914, Japan}
\email{sakasai@ms.u-tokyo.ac.jp}
\author{Masaaki Suzuki}
\address{Department of Frontier Media Science, 
Meiji University, 
4-21-1 Nakano, Nakano-ku, Tokyo, 164-8525, Japan}
\email{macky@fms.meiji.ac.jp}
\thanks{
The authors were partially supported by KAKENHI 
(No.15H03618, No.15H03619, No.16H03931, No. 16K05159 and No. 19G01785), 
Japan Society for the Promotion of Science, 
Japan.
}
\subjclass[2000]{Primary~17B56 , Secondary~20F14, 20F34, 57M27}
\keywords{Torelli group, Johnson kernel, Johnson filtration, lower central series, 
homology $3$-sphere, Casson invariant, Ohtsuki invariant, finite type invariant}
\newtheorem{thm}{Theorem}[section]
\newtheorem{prop}[thm]{Proposition}
\newtheorem{cor}[thm]{Corollary}
\theoremstyle{definition}
\newtheorem{remark}[thm]{Remark}
\newtheorem{problem}[thm]{Problem}
\newtheorem{conj}[thm]{Conjecture}
\begin{document}

\newcommand{\Mg}{\mathcal{M}_g}
\newcommand{\Mgp}{\mathcal{M}_{g,\ast}}
\newcommand{\Mgb}{\mathcal{M}_{g,1}}

\newcommand{\hg}{\mathfrak{h}_{g,1}}
\newcommand{\ag}{\mathfrak{a}_g}
\newcommand{\Ln}{\mathcal{L}_n}

\newcommand{\Sg}{\Sigma_g}
\newcommand{\Sgb}{\Sigma_{g,1}}
\newcommand{\la}{\lambda}

\newcommand{\Symp}[1]{Sp(2g,\mathbb{#1})}
\newcommand{\symp}[1]{\mathfrak{sp}(2g,\mathbb{#1})}
\newcommand{\gl}[1]{\mathfrak{gl}(n,\mathbb{#1})}

\newcommand{\At}[1]{\mathcal{A}_{#1}^t (H)}
\newcommand{\Hq}{H_{\mathbb{Q}}}

\newcommand{\Ker}{\mathop{\mathrm{Ker}}\nolimits}
\newcommand{\Hom}{\mathop{\mathrm{Hom}}\nolimits}
\renewcommand{\Im}{\mathop{\mathrm{Im}}\nolimits}

\newcommand{\Der}{\mathop{\mathrm{Der}}\nolimits}
\newcommand{\Out}{\mathop{\mathrm{Out}}\nolimits}
\newcommand{\Aut}{\mathop{\mathrm{Aut}}\nolimits}
\newcommand{\Q}{\mathbb{Q}}
\newcommand{\Z}{\mathbb{Z}}
\newcommand{\R}{\mathbb{R}}
\newcommand{\C}{\mathbb{C}}

\begin{abstract}
In the late $1980$'s, 
it was shown 
that the Casson invariant
appears in the difference between the two filtrations of the Torelli group:
the lower central series and the Johnson filtration, 
and 
its core part was identified with the secondary characteristic class
$d_1$ associated with the fact that
the first $\mathrm{MMM}$ class vanishes on the Torelli group
(however it turned out that Johnson 
proved the former part 
highly likely
prior to the above,
see Remark \ref{rem:j}).
This secondary class $d_1$ is a rational generator of $H^1(\mathcal{K}_g;\Z)^{\mathcal{M}_g}\cong\Z$ where $\mathcal{K}_g$ denotes
the Johnson subgroup of the  mapping class group $\mathcal{M}_g$.
Hain proved,
as a particular case of his fundamental result, that
this is the only difference in degree $2$. In this paper, we prove that
no other invariant than the above gives rise to new rational difference between the two filtrations
up to degree $6$.
We apply this to determine $H_1(\mathcal{K}_g;\Q)$ explicitly
by computing the description given by Dimca, Hain and Papadima.
We also show that any finite type rational invariant of homology $3$-spheres
of degrees up to $6$,
including the second and the third Ohtsuki invariants,
can be expressed by 
$d_1$ and lifts of Johnson homomorphisms.

\end{abstract}

\renewcommand\baselinestretch{1.1}
\setlength{\baselineskip}{16pt}

\newcounter{fig}
\setcounter{fig}{0}

\maketitle

\section{Introduction and statements of the main results}\label{sec:intro}
Let $\mathcal{M}_g$ be the mapping class group of a closed oriented 
surface $\Sigma_g$ of genus $g$ and let $\mathcal{I}_g\subset \mathcal{M}_g$ be the Torelli
subgroup. Namely, it is the subgroup of $\mathcal{M}_g$ consisting of all the elements
which act on the homology $H:=H_1(\Sigma_g;\Z)$ trivially.

There exist two filtrations of the Torelli group. One is the lower central series
which we denote by
$\mathcal{I}_g(k)\ (k=1,2,\ldots)$ where $\mathcal{I}_g(1)=\mathcal{I}_g$,
$\mathcal{I}_g(2)=[\mathcal{I}_g, \mathcal{I}_g]$ and
$\mathcal{I}_g(k+1)=[\mathcal{I}_g(k), \mathcal{I}_g]$ for $k\geq 1$.
The other is called the Johnson filtration 
$\mathcal{M}_g(k)\ (k=1,2,\ldots)$ of the mapping class group
where $\mathcal{M}_g(k)$ is defined to be the kernel of the natural
homomorphism
$$
\rho_k: \mathcal{M}_g\rightarrow \mathrm{Out} (N_k(\pi_1\Sigma_g)).
$$
Here $N_k(\pi_1\Sigma_g)$ denotes the $k$-th nilpotent 
quotient of the fundamental group of $\Sigma_g$ and 
$\mathrm{Out} (N_k(\pi_1\Sigma_g))$ denotes its outer automorphism group.
$\mathcal{M}_g(1)$ is nothing other than the Torelli group $\mathcal{I}_g$
so that $\mathcal{M}_g(k)\ (k=1,2,\ldots)$ is a filtration of $\mathcal{I}_g$.
This filtration was originally introduced by Johnson \cite{johnson}
for the case of a genus $g$ surface with one boundary component.
The above is the one adapted to the case of a closed surface
(see \cite{morita99} for details).
It can be shown that $\mathcal{I}_g(k)\subset \mathcal{M}_g(k)$ for all $k\geq 1$.
Johnson showed in \cite{j3} that $\mathcal{I}_g(2)$ is a finite index subgroup of $\mathcal{M}_g(2)$
and asked whether this will continue to hold for the pair $\mathcal{I}_g(k)\subset \mathcal{M}_g(k)\ (k\geq 3)$.
He also showed in \cite{j2} that $\mathcal{M}_g(2)$ is equal to the subgroup $\mathcal{K}_g$,
called the Johnson subgroup or Johnson kernel, 
consisting of all the Dehn twists along separating simple closed curves on $\Sigma_g$.

The above question was answered negatively for the case $k=3$ in \cite{morita87}\cite{morita89}
(however,  it turned out that Johnson proved this fact
highly likely prior to the above
, see Remark \ref{rem:j} below).
More precisely, 
a homomorphism $d_1: \mathcal{K}_g\rightarrow \Z$ was constructed 
which is non-trivial on $\mathcal{M}_g(3)$ while it vanishes on $\mathcal{I}_g(3)$
so that the index of the pair $\mathcal{I}_g(3)\subset \mathcal{M}_g(3)$ was proved to be infinite.
Furthermore it was shown in \cite{morita91} that there exists an isomorphism
$$
H^1(\mathcal{K}_g;\Z)^{\mathcal{M}_g}\cong \Z\quad (g\geq 2)
$$
where the homomorphism $d_1$ serves as a rational generator.
It is characterized by the fact that its value on a separating simple closed curve
on $\Sigma_g$ of type $(h,g-h)$ is $h(g-h)$ up to non-zero constants.
This homomorphism was defined as the secondary characteristic class associated with the
fact that the first $\mathrm{MMM}$ class, which is an element of $H^2(\mathcal{M}_g;\Z)$, vanishes in
$H^2(\mathcal{I}_g;\Z)$. It was also interpreted as a manifestation of the Casson invariant $\lambda$, 
which is an invariant defined for homology $3$-spheres, in the structure of the Torelli group.

\begin{remark}
In a note of Johnson
opened as \cite{jnote},
he studied the influence of the Casson invariant on the structure of the Johnson kernel.
It turned out that he proved a negative answer for the case $k=3$ 
to his question mentioned above  
and also obtained a large part of those results of \cite{morita87}\cite{morita89} concerning the
above topic.
Although his note is undated, it seems 
highly likely that
his work was done 
prior 
to what was published in the 
above papers. 
This is very surprising. On the other hand, 
the main result of the above papers is not covered, which expresses the Casson invariant as the
secondary invariant associated to the fact that the first MMM class vanishes on the Torelli group.
\label{rem:j}
\end{remark}

Now let us consider the following two graded Lie algebras
\begin{align*}
\mathrm{Gr}\,\mathfrak{t}_g&=\bigoplus_{k=1}^\infty \mathfrak{t}_g(k),\quad \mathfrak{t}_g(k)
=(\mathcal{I}_g(k)/\mathcal{I}_g(k+1))\otimes\Q\\
\mathfrak{m}_g&=\bigoplus_{k=1}^\infty \mathfrak{m}_g(k),\quad 
\mathfrak{m}_g(k)=(\mathcal{M}_g(k)/\mathcal{M}_g(k+1))\otimes\Q
\end{align*}
associated to the above two filtrations of the Torelli group.
Here $\mathfrak{t}_g$ denotes the Malcev Lie algebra of the Torelli group
and $\mathrm{Gr}\,\mathfrak{t}_g$ denotes its associated graded Lie algebra.
Hain \cite{hain} obtained fundamental results about the structure of these Lie algebras.
He gave an explicit finite presentation of them (see Theorem \ref{th:hain} below)
which implies that $\mathrm{Ker}(\mathfrak{t}_g(2)\rightarrow \mathfrak{m}_g(2))\cong\Q$.
Furthermore, he proved that
the natural homomorphism
\begin{equation}
\mathfrak{t}_g(k)\rightarrow \mathfrak{m}_g(k)
\label{eq:nh}
\end{equation}
is surjective for any $k$
which implies 
that the index of the pair $\mathcal{I}_g(k)\subset \mathcal{M}_g(k)$ 
remains infinite for any $k\geq 4$ extending the above mentioned result.
He also showed that all the higher Massey products
of the Torelli group vanish for $g\geq 4$. 

On the other hand, Ohtsuki \cite{ohtsukip} defined a series of invariants
$\lambda_k$ for homology $3$-spheres the first one being the Casson invariant $\lambda$.
He also initiated a theory of finite type invariants for homology $3$-spheres in \cite{ohtsuki}.
Then Garoufalidis and Levine \cite{gl} studied the relation between this theory and the structure
of the Torelli group extending the case of the Casson invariant mentioned above extensively.

In these situations, it would be natural to ask whether there exists any other
difference between the two filtrations of the Torelli group than the Casson invariant,
in particular whether any finite type rational invariant of homology $3$-spheres of degree
greater than $2$ appears there or not. This is equivalent to asking whether the
natural homomorphism \eqref{eq:nh} is an isomorphism for $k=3,4,\ldots$ or not.

Now it was proved in \cite{morita99} that $\mathfrak{t}_g(3)\cong \mathfrak{m}_g(3)$.
The main theorem of the present paper is the following.
\begin{thm}
For any $k=4,5,6,$ we have
$$
\mathfrak{t}_g(k)\cong \mathfrak{m}_g(k).
$$
\label{th:main}
\end{thm}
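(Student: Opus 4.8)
The plan is to reduce the theorem to the vanishing of the kernel of the surjection produced by Hain, and then to verify that vanishing by an explicit comparison of $\Symp{Q}$-module structures. Write $\phi_k\colon\mathfrak{t}_g(k)\to\mathfrak{m}_g(k)$ for the natural map, which is surjective by Hain's theorem, and set $K=\bigoplus_k K_k$ with $K_k=\Ker\phi_k$. Then $K$ is a graded ideal of $\mathrm{Gr}\,\mathfrak{t}_g$, and the results recalled above give $K_1=0$ (as $\phi_1$ is an isomorphism onto the first Johnson image), $K_2\cong\Q$ spanned by the Casson class $c$, and $K_3=0$ by $\mathfrak{t}_g(3)\cong\mathfrak{m}_g(3)$. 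The first observation I would make is that $c$ is central in $\mathrm{Gr}\,\mathfrak{t}_g$: since $K$ is an ideal, $[c,\mathfrak{t}_g(1)]\subseteq K_3=0$, so the degree-$2$ derivation $\mathrm{ad}(c)$ kills the degree-$1$ generators and hence vanishes identically. It follows that the ideal generated by $K_{\le 3}$ is zero above degree $2$, so $[\mathrm{Gr}\,\mathfrak{t}_g,K]_4=0$ and $K_4$ consists entirely of \emph{new} (minimal) relations of $\mathfrak{m}_g$ in degree $4$; once $K_4=0$ is known the same argument gives $[\mathrm{Gr}\,\mathfrak{t}_g,K]_5=0$, and so on for degrees $5,6$. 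Equivalently, setting $\mathfrak{q}=\mathrm{Gr}\,\mathfrak{t}_g/\Q c$ (the quadratic Lie algebra carrying exactly the degree-$2$ relations of $\mathfrak{m}_g$), one has $\mathfrak{q}_k=\mathfrak{t}_g(k)$ for $k\ge 3$, and the theorem becomes the assertion that $\mathfrak{m}_g$ is quadratically presented through degree $6$.

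To carry this out, I would compute $\mathfrak{t}_g(k)$ and $\mathfrak{m}_g(k)$ separately as $\Symp{Q}$-modules for $k\le 6$ and match them. For the left-hand side, Hain's presentation (Theorem \ref{th:hain}), which is quadratic in the stable range, identifies $\mathrm{Gr}\,\mathfrak{t}_g$ with $\mathbb{L}/\langle R\rangle$, where $\mathbb{L}=\mathbb{L}(\mathfrak{t}_g(1))$ is the free Lie algebra on the known $\Symp{Q}$-module $\mathfrak{t}_g(1)$ and $R\subset\wedge^2\mathfrak{t}_g(1)$ is Hain's explicit relation module. Then $\mathfrak{t}_g(k)=\mathbb{L}_k/\langle R\rangle_k$, and the ideal part is computed by the recursion $\langle R\rangle_2=R$, $\langle R\rangle_k=[\mathfrak{t}_g(1),\langle R\rangle_{k-1}]$ for $k\ge 3$; decomposing the free Lie algebra pieces (via the classical Witt and plethysm formulas) and these bracket images into irreducibles yields $\mathfrak{t}_g(k)$ for $k\le 6$.

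For the right-hand side I would use the Johnson embedding $\mathfrak{m}_g(k)\hookrightarrow\mathfrak{h}_g(k)$ into the target derivation Lie algebra of the Johnson homomorphisms, so that $\mathfrak{m}_g(k)$ is the complement of the Johnson cokernel $\mathfrak{h}_g(k)/\mathfrak{m}_g(k)$ inside $\mathfrak{h}_g(k)$. The $\Symp{Q}$-module structure of $\mathfrak{h}_g(k)$ is given by its standard description, and the Johnson cokernel (including its trace components) has been determined in the literature through low degrees; combining the two gives $\mathfrak{m}_g(k)$ as an $\Symp{Q}$-module for $k\le 6$. Comparing this with the $\mathfrak{t}_g(k)$ found above, and invoking surjectivity of $\phi_k$ to promote an equality of dimensions to an isomorphism, yields $K_4=0$ and then, inductively, $K_5=K_6=0$.

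The hard part, I expect, is not conceptual but the sheer size of the representation-theoretic computation in degrees $5$ and $6$, where many $\Symp{Q}$-irreducibles occur and the bracket maps must be analysed carefully; in particular the decisive and most delicate input is the exact $\Symp{Q}$-module structure of the Johnson cokernel up to degree $6$, which is presumably what pins the range of the theorem at $k=6$. Two further points need attention: one must stay within the stable range, taking $g$ large enough that the irreducible decompositions are $g$-independent and that Hain's presentation is genuinely quadratic, and one must make sure no left-hand relations beyond the quadratic ones enter, so that the recursion above computes $\mathfrak{t}_g(k)$ correctly.
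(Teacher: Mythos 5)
Your reduction is sound and rests on the same pillars as the paper's argument: Hain's quadratic presentation, the surjectivity of $\mathfrak{t}_g(k)\to\mathfrak{m}_g(k)$, the centrality of the Casson class (so that the kernel $\mathfrak{i}_g$ feeds nothing into higher degrees by bracketing; compare Corollary \ref{cor:h2} and Proposition \ref{prop:iw}), and the known $\mathrm{Sp}(2g,\Q)$-decompositions of the Johnson images up to degree $6$. Where you genuinely diverge is in the decisive verification. The paper never computes $\mathfrak{t}_g(k)$ for $k\geq 4$: it converts the statement into $H_2(\mathfrak{m}_g)_w=0$ for $w=4,5,6$ via the five-term exact sequence of $0\to\mathfrak{i}_g\to\mathrm{Gr}\,\mathfrak{t}_g\to\mathfrak{m}_g\to 0$, transfers the problem to the one-boundary algebra $\mathfrak{m}_{g,1}$, and checks with explicit contraction/projection detectors that every weight-$w$ $2$-cycle in $\wedge^2\mathfrak{m}_{g,1}$ is a boundary; everything then lives inside $H_\Q^{\otimes(w+4)}$ and $H_\Q^{\otimes(w+6)}$ via the Johnson embedding. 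You instead propose to compute $\mathfrak{t}_g(k)=\mathcal{L}([1^3])_k/\langle R\rangle_k$ by the ideal recursion $\langle R\rangle_k=[\,[1^3],\langle R\rangle_{k-1}]$ and match dimensions against $\mathfrak{m}_g(k)$; this is logically valid (the recursion follows from the Jacobi identity since the free Lie algebra is generated in degree one, and surjectivity upgrades equal dimensions to an isomorphism), but determining the image of the bracket maps requires rank computations on multiplicity spaces inside $\mathcal{L}([1^3])_k\subset H_\Q^{\otimes 3k}$ — that is, $H_\Q^{\otimes 18}$ at $k=6$ versus the paper's $H_\Q^{\otimes 12}$, and the paper already reports its weight-$6$ computation to be roughly $1000$ times the size of the weight-$4$ one. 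What the paper's route buys is precisely that only the homology of the \emph{known} quotient $\mathfrak{m}_{g,1}$ must be computed; what yours would buy, if carried through, is the explicit $\mathrm{Sp}$-module structure of $\mathfrak{t}_g(k)$ itself. One practical caveat: the published decompositions are for $\mathfrak{m}_{g,1}(k)\subset\mathfrak{h}_{g,1}(k)$, so to compare with Hain's closed-surface presentation you must either quotient by $\mathrm{Gr}\,\mathfrak{p}_{g,1}$ as in Proposition \ref{prop:ii} or move the whole comparison to the one-boundary setting, as the paper does.
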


As a corollary to the above theorem, we obtain the cases $k=5,6,7$ of the following result.
The case $k=3$ follows from Hain's theorem (Theorem \ref{th:hain}) 
combined with a result of \cite{morita89} and
the case $k=4$ follows from a result of \cite{morita99} mentioned above.

\begin{cor}
For any $k=3,4,5,6,7$, the $k$-th group
$
\mathcal{I}_g(k)
$
in the lower central series of the Torelli group
is a finite index subgroup of the kernel of the
non-trivial homomorphism
$$
d_1: \mathcal{M}_g(k)\rightarrow \Z.
$$
\label{cor:fi}
\end{cor}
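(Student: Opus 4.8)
The plan is to deduce the corollary from the main theorem by translating the statement about the graded Lie algebras back into a statement about the groups themselves. The corollary asserts, for each $k=3,4,5,6,7$, that $\mathcal{I}_g(k)$ has finite index in $\ker(d_1:\mathcal{M}_g(k)\to\Z)$. The key observation is that the claim $\mathfrak{t}_g(k)\cong\mathfrak{m}_g(k)$ (equivalently, surjectivity of \eqref{eq:nh} being an isomorphism) says precisely that the natural inclusion $\mathcal{I}_g(k)\hookrightarrow\mathcal{M}_g(k)$ induces an isomorphism on the rational graded quotients $\mathcal{I}_g(k)/\mathcal{I}_g(k+1)\to\mathcal{M}_g(k)/\mathcal{M}_g(k+1)$ in the relevant degree, so that rationally the two filtrations agree; the only obstruction to $\mathcal{I}_g(k)$ being finite index in $\mathcal{M}_g(k)$ itself is the one-dimensional discrepancy in degree $2$ detected by $d_1$.

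First I would make precise the dictionary between indices of subgroups and the kernels of the maps \eqref{eq:nh}. For a pair $\mathcal{I}_g(k)\subset\mathcal{M}_g(k)$, the inclusion has finite index relative to a homomorphism vanishing on $\mathcal{I}_g(k)$ exactly when the corresponding cokernel of $\mathfrak{t}_g(k)\to\mathfrak{m}_g(k)$ is accounted for by that homomorphism. Since $d_1$ is the rational generator of $H^1(\mathcal{K}_g;\Z)^{\mathcal{M}_g}\cong\Z$ and is non-trivial on $\mathcal{M}_g(3)$ while vanishing on $\mathcal{I}_g(3)$, the single surviving difference between the two filtrations is carried by $d_1$; restricting $d_1$ to each $\mathcal{M}_g(k)$ for $k\geq 3$ gives the non-trivial homomorphism $d_1:\mathcal{M}_g(k)\to\Z$ in the statement. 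The point is then that once $\mathfrak{t}_g(k)\cong\mathfrak{m}_g(k)$, the filtration $\mathcal{I}_g(k)$ fills up $\ker(d_1:\mathcal{M}_g(k)\to\Z)$ up to finite index.

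Second I would organize the induction by degree, using Theorem \ref{th:main} together with the earlier established cases. For $k=3$, the statement follows from Hain's presentation (Theorem \ref{th:hain}) combined with the result of \cite{morita89}; for $k=4$ it follows from $\mathfrak{t}_g(4)\cong\mathfrak{m}_g(4)$, which is the result of \cite{morita99}. For $k=5,6,7$ I would invoke Theorem \ref{th:main} in degrees $k-1=4,5,6$ respectively: the isomorphism $\mathfrak{t}_g(k-1)\cong\mathfrak{m}_g(k-1)$ controls how $\mathcal{I}_g(k)$ sits inside $\mathcal{M}_g(k)$, since the graded quotient in degree $k-1$ being unchanged, together with the surjectivity statements in all lower degrees coming from Hain's theorem, forces the inclusion of $\ker d_1$-kernels to be of finite index. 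The bookkeeping here is the step that must be handled carefully: one must track that the finite-index conclusion in degree $k$ uses the Lie algebra isomorphism in degree $k-1$, because the relevant quotient for the index computation is one degree below.

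The main obstacle will be passing from the rational isomorphism of graded Lie algebras to an integral finite-index statement for the groups. The isomorphism $\mathfrak{t}_g(k)\cong\mathfrak{m}_g(k)$ is a statement after tensoring with $\Q$, so the natural map $\mathcal{I}_g(k)/\mathcal{I}_g(k+1)\to\mathcal{M}_g(k)/\mathcal{M}_g(k+1)$ is only guaranteed to have finite cokernel, and one must argue that finite cokernel in each graded piece accumulates to a genuine finite index of $\mathcal{I}_g(k)$ inside $\ker d_1$. I would handle this by a standard filtration argument: the abelian quotients being finite index in each degree, together with the fact that both $\mathcal{M}_g(k)$ and $\mathcal{I}_g(k)$ are finitely generated, yields the finite-index conclusion. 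I expect this integrality argument, rather than the Lie-theoretic input, to be the delicate part, since one is asserting something about the groups that is strictly stronger than the rational Lie algebra statement furnished by Theorem \ref{th:main}.
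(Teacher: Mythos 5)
Your overall strategy coincides with the paper's: reduce the case $k$ of the corollary to the statement that the finitely generated nilpotent group $\mathcal{M}_g(k)/\mathcal{I}_g(k)$ has rational rank one, with the single $\Q$ detected by $d_1$, using the Lie algebra isomorphism one degree below; and you correctly identify the degree shift (case $k$ uses $\mathfrak{t}_g(k-1)\cong\mathfrak{m}_g(k-1)$, which is why the corollary reaches $k=7$ while Theorem \ref{th:main} stops at $6$). However, there is one genuine gap: you simply assert that $d_1:\mathcal{M}_g(k)\to\Z$ is non-trivial for all $k\leq 7$. What is known from \cite{morita89} is non-triviality on $\mathcal{M}_g(3)$ only, and non-vanishing of a homomorphism does not pass to the smaller subgroups $\mathcal{M}_g(k)$, $k\geq 4$. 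This is not a cosmetic point: the rank computation only shows $(\mathcal{M}_g(k)/\mathcal{I}_g(k))\otimes\Q\cong\Q$, and if $d_1$ happened to vanish on $\mathcal{M}_g(k)$ its kernel would be all of $\mathcal{M}_g(k)$, in which $\mathcal{I}_g(k)$ has \emph{infinite} index. The paper closes this by an argument you do not supply: if $m\geq 4$ were minimal with $d_1|_{\mathcal{M}_g(m)}=0$, then $d_1$ would descend to a non-zero functional on $\mathfrak{m}_g(m-1)=(\mathcal{M}_g(m-1)/\mathcal{M}_g(m))\otimes\Q$ annihilating the image of $\mathfrak{t}_g(m-1)$ (since $d_1$ vanishes on $\mathcal{I}_g(3)$), contradicting Hain's surjectivity of $\mathfrak{t}_g(m-1)\to\mathfrak{m}_g(m-1)$.

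Two smaller corrections. The integrality step you worry about is handled not by finite generation of $\mathcal{M}_g(k)$ and $\mathcal{I}_g(k)$ themselves (that would import the much harder results of \cite{eh}\cite{cep} and restrict the genus unnecessarily), but by Johnson's finite generation of $\mathcal{I}_g$ \cite{jfg}: each $\mathcal{M}_g(k)/\mathcal{I}_g(\ell)$ is then a finitely generated nilpotent group, so trivial rationalization implies finiteness, and one computes ranks through the two exact sequences relating $(\mathcal{M}_g(k-1)/\mathcal{I}_g(k))\otimes\Q$ to $\mathfrak{t}_g(k-1)$, $\mathfrak{m}_g(k-1)$ and $(\mathcal{M}_g(k-1)/\mathcal{I}_g(k-1))\otimes\Q$; note that this makes the induction use the conclusion at stage $k-1$ as well, which your sketch leaves implicit. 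Finally, the case $k=4$ rests on $\mathfrak{t}_g(3)\cong\mathfrak{m}_g(3)$ (the result of \cite{morita99}), consistent with your own degree-shift rule, not on $\mathfrak{t}_g(4)\cong\mathfrak{m}_g(4)$, which is part of Theorem \ref{th:main} of the present paper.
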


Recall here that Johnson \cite{j3} proved that $\mathcal{I}_g(2)=[\mathcal{I}_g,\mathcal{I}_g]$
is a finite index subgroup of $\mathcal{M}_g(2)=\mathcal{K}_g$.

Next we present two applications of Theorem \ref{th:main}.
First, we give the explicit form of the rational abelianization 
$H_1(\mathcal{K}_g;\Q)$ of the Johnson subgroup.  Dimca and Papadima \cite{dp} proved
that $H_1(\mathcal{K}_g;\Q)$ is finite dimensional for $g\geq 4$. Then Dimca, Hain and Papadima
\cite{dhp} gave a description of it. However they did not give the final explicit form.
Here we compute their description by combining the case $k=4$ of Theorem \ref{th:main}
and former results concerning the Johnson homomorphisms
to obtain the following result.

\begin{thm}
The secondary class $d_1$ together with the refinement $\tilde{\tau}_2$ of the second 
Johnson homomorphism gives the following isomorphism for $g\geq 6$.
$$
H_1(\mathcal{K}_g;\Q)\cong \Q\oplus [2^2]\oplus [31^2].
$$
\label{th:kab}
\end{thm}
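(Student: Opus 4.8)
The plan is to show that the map $(d_1,\tilde{\tau}_2)\colon H_1(\mathcal{K}_g;\Q)\to\Q\oplus[2^2]\oplus[31^2]$ is an isomorphism by proving surjectivity essentially formally and reserving the real work for injectivity, which is where the case $k=4$ of Theorem \ref{th:main} enters. Since $\mathcal{K}_g=\mathcal{M}_g(2)$ is normal in $\mathcal{M}_g$, the space $H_1(\mathcal{K}_g;\Q)$ is an $Sp(2g,\Q)$-module, and the whole argument takes place in the representation ring of $Sp(2g,\Q)$ in the stable range $g\geq 6$.

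For surjectivity, note first that $[\mathcal{M}_g(2),\mathcal{M}_g(2)]\subseteq\mathcal{M}_g(4)$, so $\mathcal{M}_g(2)/\mathcal{M}_g(4)$ is abelian and the projection factors through $\mathcal{K}_g^{\mathrm{ab}}$, giving a surjection $H_1(\mathcal{K}_g;\Q)\twoheadrightarrow\mathfrak{m}_g(2)\oplus\mathfrak{m}_g(3)$. This is precisely the refined second Johnson homomorphism $\tilde{\tau}_2$, which is defined on all of $\mathcal{K}_g$ and records degrees $2$ and $3$ at once. By Hain's presentation (Theorem \ref{th:hain}) one has $\mathfrak{m}_g(2)=[2^2]$, and by the identification $\mathfrak{t}_g(3)\cong\mathfrak{m}_g(3)$ from \cite{morita99} together with former results on the Johnson homomorphisms one has $\mathfrak{m}_g(3)=[31^2]$, so the image of $\tilde{\tau}_2$ is $[2^2]\oplus[31^2]$. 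The remaining summand is produced by $d_1$: since $H^1(\mathcal{K}_g;\Z)^{\mathcal{M}_g}\cong\Z$ is rationally generated by $d_1$ while $[2^2]\oplus[31^2]$ has no trivial constituent, the class $d_1$ is nonzero and independent of the $\tilde{\tau}_2$-part, which makes $(d_1,\tilde{\tau}_2)$ onto.

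The substance is injectivity, equivalently that $H_1(\mathcal{K}_g;\Q)$ is no larger than $\Q\oplus[2^2]\oplus[31^2]$. The kernel of the surjection $\tilde{\tau}_2$ above is the part of $\mathcal{K}_g^{\mathrm{ab}}$ supported in degrees $\geq 4$, namely $\mathcal{M}_g(4)$ modulo the commutators $[\mathcal{K}_g,\mathcal{K}_g]$; its leading piece is the cokernel of the bracket $[\cdot,\cdot]\colon\mathfrak{m}_g(2)\otimes\mathfrak{m}_g(2)\to\mathfrak{m}_g(4)$. Here I would invoke the description of Dimca, Hain and Papadima \cite{dhp}, which presents $H_1(\mathcal{K}_g;\Q)$ as an $Sp(2g,\Q)$-module built from exactly these low-degree brackets of $\mathrm{Gr}\,\mathfrak{t}_g$ and, crucially, guarantees finiteness so that no deeper degrees contribute; the only ingredient their formula leaves undetermined is the degree-$4$ graded piece together with the image of $\mathfrak{m}_g(2)\otimes\mathfrak{m}_g(2)$ inside it.

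The key step is then to substitute $\mathfrak{t}_g(4)\cong\mathfrak{m}_g(4)$ from the case $k=4$ of Theorem \ref{th:main}. Because Hain's quadratic presentation computes $\mathfrak{t}_g(4)$ explicitly as a quotient of iterated brackets of $\mathfrak{t}_g(1)=[1^3]$, this pins down $\mathfrak{m}_g(4)$ as an $Sp(2g,\Q)$-module and makes the cokernel above computable. I would finish by decomposing $\wedge^2[1^3]$, the tensor products $[1^3]\otimes\mathfrak{m}_g(2)$ and $\mathfrak{m}_g(2)\otimes\mathfrak{m}_g(2)$, and the resulting degree-$4$ relations into $Sp(2g,\Q)$-irreducibles in the stable range, tracking which constituents lie in the image of the bracket and checking that the cokernel collapses to a single trivial representation $\Q$, matched with $d_1$. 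I expect the main obstacle to be precisely this last cancellation: the plethysm $\wedge^2[1^3]$ and the degree-$4$ tensor products carry many irreducible constituents, and the delicate point is to determine exactly which survive in the Dimca--Hain--Papadima cokernel so that all the a priori possible summands cancel and leave only $\Q$. Keeping these multiplicities honest is what forces the hypothesis $g\geq 6$, since low-genus coincidences among the $Sp(2g,\Q)$-irreducibles would otherwise corrupt the count.
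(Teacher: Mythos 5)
Your outline correctly isolates the two essential inputs (the Dimca--Hain--Papadima description and the case $k=4$ of Theorem \ref{th:main}), and your surjectivity argument is fine, but the injectivity half contains a bookkeeping error that would derail the computation. In the identification $H_1(\mathcal{K}_g;\Q)\cong H_1([\mathrm{Gr}\,\mathfrak{t}_g,\mathrm{Gr}\,\mathfrak{t}_g])$ the grading is by the lower central series of $\mathcal{I}_g$, so the weight--$2$ piece is $\mathfrak{t}_g(2)\cong\Q\oplus[2^2]$ (Hain), not $\mathfrak{m}_g(2)=[2^2]$: the trivial summand is the central $\Q=\mathfrak{i}_g(2)$, and that is already where $d_1$ lives. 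Your plan instead places the $\Q$ in weight $4$, as the cokernel of $[\,\cdot,\cdot\,]\colon\wedge^2\mathfrak{m}_g(2)\to\mathfrak{m}_g(4)$. This cannot happen: $\mathfrak{m}_g(4)$ contains no trivial constituent, and in fact that bracket is \emph{surjective} by \cite{sakasai2}; combined with $\mathfrak{t}_g(4)\cong\mathfrak{m}_g(4)$ this gives $H_1([\mathrm{Gr}\,\mathfrak{t}_g,\mathrm{Gr}\,\mathfrak{t}_g])_4=0$ (so no large plethysm computation is needed there). Had you carried out your degree--$4$ computation you would have found $0$ and your total would be missing the $\Q$. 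The root of the confusion is grading $\ker\tilde{\tau}_2=\mathcal{M}_g(4)$ by the Johnson filtration: since $d_1$ is nonzero on \emph{every} $\mathcal{M}_g(k)$, the $d_1$-class is invisible in the associated graded of that filtration, which is exactly why one must pass to the lower central series picture as Dimca--Hain--Papadima do.

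The second gap is your treatment of weights $\geq 5$: finite-dimensionality of $H_1(\mathcal{K}_g;\Q)$ (Dimca--Papadima) does not tell you in which weights the nonzero pieces sit, so it cannot by itself rule out contributions in degree $\geq 5$. One must prove that the bracket
$\bigoplus_{i+j=k,\ i,j>1}\mathfrak{t}_g(i)\otimes\mathfrak{t}_g(j)\to\mathfrak{t}_g(k)$
is surjective for all $k\geq 5$. This is done by induction on $k$, with the degree--$4$ surjectivity above as the base case: since $\mathrm{Gr}\,\mathfrak{t}_g$ is generated in degree $1$, it suffices to absorb elements $[\alpha,\xi]$ with $\alpha\in\mathfrak{t}_g(1)$, and writing $\xi$ as a sum of brackets of elements of degree $\geq 2$ (induction hypothesis) and applying the Jacobi identity re-expresses $[\alpha,\xi]$ in the required form. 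With the $\Q$ restored to weight $2$ and this induction supplied, your argument closes up and coincides with the intended proof.
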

Here and henceforth, for a given Young diagram
$\lambda=[\lambda_1\cdots \lambda_h]$, we denote the 
irreducible representation of $\mathrm{Sp}(2g,\Q)$ corresponding to
$\lambda$ simply by $[\lambda_1\cdots \lambda_h]$. As for the refinements $\tilde{\tau}_k$ of
Johnson homomorphisms, see Section \ref{sec:ft} for details.

By making use of recent remarkable results of Ershov-He \cite{eh} and Church-Ershov-Putman \cite{cep}, we obtain
the following.
\begin{cor}
$\mathrm{(i)}\ $
The two subgroups $[\mathcal{K}_g,\mathcal{K}_g]$ and $\mathcal{I}_g(4)$ of the Torelli group
$\mathcal{I}_g$ are commensurable for $g\geq 6$.

$\mathrm{(ii)}\ $ $[\mathcal{K}_g,\mathcal{K}_g]$ is finitely generated for $g\geq 7$.
\label{cor:ki}
\end{cor}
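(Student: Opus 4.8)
The plan is to derive both statements from Theorem~\ref{th:kab}, Johnson's finite-index theorem \cite{j3}, and the finite generation results of Ershov--He \cite{eh} and Church--Ershov--Putman \cite{cep}. I work inside the Malcev Lie algebra $\mathfrak{t}_g$ of $\mathcal{I}_g$, writing $L^k$ for the $k$-th term of its lower central series, so that $\mathcal{I}_g(k)$ has Malcev Lie algebra $L^k$ and $\mathfrak{t}_g(k)=L^k/L^{k+1}$. For (i), I would first invoke Johnson's theorem \cite{j3} that $\mathcal{I}_g(2)=[\mathcal{I}_g,\mathcal{I}_g]$ is finite index in $\mathcal{K}_g=\mathcal{M}_g(2)$: the two groups then share the Malcev Lie algebra $L^2=[\mathfrak{t}_g,\mathfrak{t}_g]$, and $H_1(\mathcal{K}_g;\Q)$ is the abelianization $L^2/[L^2,L^2]$ underlying the Dimca--Hain--Papadima description. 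The subgroup $[\mathcal{I}_g(2),\mathcal{I}_g(2)]$ is contained in both $[\mathcal{K}_g,\mathcal{K}_g]$ and $\mathcal{I}_g(4)$ (using $[\mathcal{I}_g(2),\mathcal{I}_g(2)]\subseteq\mathcal{I}_g(4)$ and $\mathcal{I}_g(2)\subseteq\mathcal{K}_g$), and all three groups correspond rationally to the Lie subalgebras $[L^2,L^2]\subseteq L^4$ of $\mathfrak{t}_g$. Thus I would reduce (i) to the equality $[L^2,L^2]=L^4$, which is equivalent to injectivity of the canonical graded surjection
\[
L^2/[L^2,L^2]=H_1(\mathcal{K}_g;\Q)\twoheadrightarrow L^2/L^4=\mathfrak{t}_g(2)\oplus\mathfrak{t}_g(3),
\]
that is, to $H_1(\mathcal{K}_g;\Q)$ being concentrated in lower central series degrees $2$ and $3$.

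The second step matches gradings. Because $[L^2,L^2]$ lies in degrees $\geq 4$, the degree $2$ and degree $3$ parts of $H_1(\mathcal{K}_g;\Q)$ are exactly $\mathfrak{t}_g(2)=\Q\oplus[2^2]$ and $\mathfrak{t}_g(3)\cong\mathfrak{m}_g(3)$ (the latter isomorphism coming from \cite{morita99}, with expected value $[31^2]$). Since Theorem~\ref{th:kab} gives $H_1(\mathcal{K}_g;\Q)=\Q\oplus[2^2]\oplus[31^2]$, the degree $2$ and $3$ parts already exhaust the whole group; hence no component survives in degree $\geq 4$, the displayed surjection is an isomorphism, and $[L^2,L^2]=L^4$. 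To pass from this rational equality to integral commensurability I would observe that $[\mathcal{I}_g(2),\mathcal{I}_g(2)]$ is a finitely generated subgroup of the residually torsion-free nilpotent group $\mathcal{I}_g$ having the same Malcev Lie algebra as each of $[\mathcal{K}_g,\mathcal{K}_g]$ and $\mathcal{I}_g(4)$ in which it sits, so it has finite index in both; this proves (i) for $g\geq 6$.

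For (ii), I would combine (i) with finite generation: a group commensurable with a finitely generated one is itself finitely generated, and by (i) the group $[\mathcal{K}_g,\mathcal{K}_g]$ is commensurable with $\mathcal{I}_g(4)$. The results of \cite{eh} and \cite{cep} supply finite generation of the relevant term $\mathcal{I}_g(4)$ of the lower central series of the Torelli group precisely in the range $g\geq 7$ (which accounts for the one extra unit of genus over (i)), and feeding this into the commensurability of (i) yields that $[\mathcal{K}_g,\mathcal{K}_g]$ is finitely generated for $g\geq 7$.

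I expect the genuine difficulty to sit entirely inside Theorem~\ref{th:kab}: the crux of (i) is the vanishing of $H_1(\mathcal{K}_g;\Q)$ in degrees $\geq 4$, equivalently the surjectivity of the bracket $\mathfrak{t}_g(2)\otimes\mathfrak{t}_g(2)\to\mathfrak{t}_g(4)$ together with its higher analogues, and this is exactly what the representation-theoretic computation behind Theorem~\ref{th:kab} establishes. Once that input is available, the remaining step of (i) and all of (ii) are formal, the only other essential ingredient being the external finite generation theorems of \cite{eh} and \cite{cep}.
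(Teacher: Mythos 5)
Your reduction of part (i) to the vanishing of $H_1(\mathcal{K}_g;\Q)$ in weights $\geq 4$ correctly locates where Theorem \ref{th:kab} enters, and your derivation of (ii) from (i) together with the finite generation of $\mathcal{I}_g(4)$ from \cite{cep} is exactly the paper's. The gap is in the bridge you build from the rational Lie algebra equality $[L^2,L^2]=L^4$ back to an integral finite-index statement. The principle you invoke --- a finitely generated subgroup of a residually torsion-free nilpotent group inducing an isomorphism on Malcev Lie algebras has finite index --- is false: for the free group $F_2=\langle a,b\rangle$, the subgroup $\langle a,b^2\rangle$ is free of rank two, induces an isomorphism on Malcev Lie algebras (both are free on two degree-one generators and the map is an isomorphism in degree one), yet has infinite index. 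Likewise, the claim that $[\mathcal{I}_g(2),\mathcal{I}_g(2)]$ has finite index in $[\mathcal{K}_g,\mathcal{K}_g]$ because $\mathcal{I}_g(2)$ has finite index in $\mathcal{K}_g$ has no general justification: for the infinite dihedral group the commutator subgroup of the index-two abelian subgroup is trivial while the commutator subgroup of the whole group is infinite cyclic. Finally, you assert without proof that $[\mathcal{I}_g(2),\mathcal{I}_g(2)]$ is finitely generated; that is essentially as hard as part (ii) itself, so this comes close to circularity.

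What closes these gaps in the paper is a different mechanism: one exhibits a single explicit subgroup, $\mathrm{Ker}(d_1,\tilde{\tau}_g(2))\subset\mathcal{K}_g$, and shows that \emph{both} $[\mathcal{K}_g,\mathcal{K}_g]$ and $\mathcal{I}_g(4)$ sit inside it with finite index. For $[\mathcal{K}_g,\mathcal{K}_g]$ the quotient is identified, via Theorem \ref{th:kab}, with $\mathrm{Torsion}(H_1(\mathcal{K}_g;\Z))$, and its finiteness requires the finite generation of $\mathcal{K}_g$ from \cite{eh}\cite{cep} --- note that this input is already indispensable in part (i), not only in part (ii), since otherwise the torsion of $H_1(\mathcal{K}_g;\Z)$ could a priori be infinite. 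For $\mathcal{I}_g(4)$ one uses Heap's theorem $\mathrm{Ker}\,\tilde{\tau}_g(2)=\mathcal{M}_g(4)$ together with the case $k=4$ of Corollary \ref{cor:fi}; neither of these ingredients appears in your argument, and without them (or a careful substitute handling the exactness of Malcev completion for these non-nilpotent quotients) the passage from the rational computation to commensurability does not go through.
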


\begin{remark}
Johnson \cite{j3} determined the abelianization $H_1(\mathcal{I}_g;\Z)$ of the Torelli group completely, 
where the Birman-Craggs homomorphisms introduced in \cite{bc} played an
essential role in describing its torsion part. Although some of the Birman-Craggs homomorphisms 
restrict non-trivially on $\mathcal{K}_g$, they are mod $2$ reductions of integral ones
because the Casson invariant defines an integer valued homomorphism on $\mathcal{K}_g$.
Therefore, no $2$-torsion class in $H_1(\mathcal{I}_g;\Z)$ can be lifted to $H_1(\mathcal{K}_g;\Z)$
as a torsion class. Thus,
at present, there is no known information about the
torsion part of $H_1(\mathcal{K}_g;\Z)$. It should be an important problem to determine it.
\end{remark}

Another application of our main theorem is the following.
\begin{thm}
Any finite type rational invariant of homology $3$-spheres of degrees $4$ and $6$, 
including the Ohtsuki invariants
$\lambda_2$ and $\lambda_3$, 
can be expressed
by $d_1$ and (lifts of) Johnson homomorphisms.
\label{th:fti}
\end{thm}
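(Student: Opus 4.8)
The plan is to run the Garoufalidis--Levine correspondence \cite{gl}, which ties finite type invariants of homology $3$-spheres to the graded structure of the Torelli group, and then to feed in the identifications provided by Theorem \ref{th:main} together with the theory of Johnson homomorphisms. Concretely, every homology $3$-sphere is obtained by regluing the standard genus-$g$ Heegaard splitting of $S^3$ along an element of $\mathcal{I}_g$, so a rational finite type invariant $v$ of degree $d$ restricts to a function on $\mathcal{I}_g$; the decisive input from \cite{gl} is that, modulo invariants of strictly lower degree, $v$ is pinned down by its \emph{symbol}, an $\mathrm{Sp}(2g,\Q)$-invariant functional on a space built from the associated graded Lie algebra $\mathrm{Gr}\,\mathfrak{t}_g$. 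Since a connected trivalent Jacobi diagram has an even number of vertices, only even $d$ occur, and after the degree-$2$ case (the Casson invariant) the first genuinely new cases are $d=4$ and $d=6$. I would therefore first locate the graded pieces of $\mathrm{Gr}\,\mathfrak{t}_g$ in which these symbols and their lower-degree corrections live, all of which lie in the range $k\leq 6$ covered by Theorem \ref{th:main} and the earlier cases $k=2,3$.

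Next I would invoke Theorem \ref{th:main} to replace each relevant $\mathfrak{t}_g(k)$ by $\mathfrak{m}_g(k)$. By the definition of the Johnson filtration, $\mathfrak{m}_g(k)$ is identified through the Johnson homomorphism $\tau_k$ with its image in the symplectic derivation algebra $\hg$, so each symbol becomes an $\mathrm{Sp}$-invariant expression in the images of Johnson homomorphisms. The one exceptional contribution is $\Ker(\mathfrak{t}_g(2)\rightarrow\mathfrak{m}_g(2))\cong\Q$, which by Hain's theorem (Theorem \ref{th:hain}) is the only discrepancy in degree $2$ and is exactly detected by $d_1$; this accounts for all $d_1$ terms, and concretely for the Casson contribution contained in $\lambda_2$ and $\lambda_3$. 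Comparing the dimension of the space of degree-$d$ symbols, read off from Jacobi diagram combinatorics, with the dimension of the $\mathrm{Sp}$-invariant expressions that $d_1$ and the Johnson homomorphisms generate (computed from the known $\mathrm{Sp}$-decompositions of $\mathfrak{m}_g(k)$), I would conclude that the latter already exhaust the former in degrees $4$ and $6$.

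Finally I would upgrade this statement about symbols to one about the invariants themselves by downward induction on $d$: the symbol determines the leading graded term, and the resulting error is a finite type invariant of smaller even degree, hence expressible by the inductive hypothesis, with base cases $d=2$ (Casson $=d_1$ up to a constant), $\mathfrak{t}_g(3)\cong\mathfrak{m}_g(3)$ from \cite{morita99}, and Theorem \ref{th:main}. Because the actual invariant is a global function on $\mathcal{I}_g$ rather than a single graded piece, this reassembly is exactly where the \emph{lifts} $\tilde{\tau}_k$ of the Johnson homomorphisms --- twisted cocycles defined on the whole mapping class group rather than on one filtration quotient --- must enter. That $\lambda_2$ and $\lambda_3$ are genuinely finite type of degrees $4$ and $6$ is due to Ohtsuki \cite{ohtsuki}, so they are covered by the general conclusion.

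The step I expect to be the main obstacle is this passage from symbols to invariants, since a finite type invariant is recovered from its symbol only up to lower-degree indeterminacy and one must control the whole tower of corrections simultaneously rather than just the top term. This is precisely why the full strength of Theorem \ref{th:main} across all of $k=4,5,6$ --- not merely in the top degree --- is needed, and why one must also confirm that no finite type invariant in these degrees escapes the Garoufalidis--Levine description, i.e.\ that the symbol map is injective on the relevant spaces; this injectivity ultimately rests on Hain's presentation and the surjectivity of the natural homomorphism \eqref{eq:nh}.
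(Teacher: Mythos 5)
Your proposal assembles the right ingredients (the Garoufalidis--Levine framework, Theorem \ref{th:main}, and the fact that $d_1$ detects $\Ker(\mathfrak{t}_g(2)\to\mathfrak{m}_g(2))\cong\Q$), but the step you yourself flag as the main obstacle --- passing from symbols back to the invariant --- is a genuine gap, and the dimension count you propose does not close it. Showing that expressions in $d_1$ and the Johnson homomorphisms span the space of degree-$d$ symbols is a surjectivity statement; what must be proved is that an \emph{arbitrary} finite type $v$ factors through $(d_1,\tilde{\tau}_g(k))$ as a function on the group, and for that you would need injectivity of the symbol map modulo lower-degree invariants together with control of \emph{all} lower-degree invariants (a priori including odd degrees) in order to run your downward induction. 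None of this is supplied, and it is considerably more than the theorem requires.

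The paper avoids the issue entirely with one input you do not use: Levine's additivity lemma (\cite[Lemma 5.5]{levine}, recorded as Corollary \ref{cor:levine}), which says that for $v$ of finite type $k$ one has $v(M_{\varphi\psi})=v(M_{\varphi})+v(M_{\psi})$ whenever $\varphi\in\mathcal{I}_g(k_1)$, $\psi\in\mathcal{I}_g(k_2)$ with $k_1+k_2>k$. For $k=4$ this makes $v^*$ an honest homomorphism on $\mathcal{I}_g(3)$ (since $2\cdot 3>4$) and shows that $v^*$ on $\mathcal{M}_g(3)$ descends, as a set map, to $\mathcal{M}_g(3)/\mathcal{I}_g(5)$ because $v^*$ vanishes on $\mathcal{I}_g(5)$. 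The only remaining input is Proposition \ref{prop:tildet}, i.e.\ the identification $(\mathcal{M}_g(3)/\mathcal{I}_g(5))\otimes\Q\cong\Q\oplus\mathfrak{m}_g(3)\oplus\mathfrak{m}_g(4)$ realized by $(d_1,p\circ\tilde{\tau}_g(3))$, and this is exactly where Theorem \ref{th:main} enters, through Corollary \ref{cor:fi} (finiteness of the index of $\mathcal{I}_g(5)$ in $\Ker(d_1|_{\mathcal{M}_g(5)})$). The degree $6$ case is identical with $\mathcal{I}_g(4)$, $\mathcal{M}_g(4)/\mathcal{I}_g(7)$ and $\tilde{\tau}_g(4)$. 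No symbols, no Jacobi-diagram dimension counts, and no induction on degree are needed; to repair your argument, replace the symbol apparatus by Levine's lemma and identify the explicit quotient groups on which $v^*$ factors.
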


In Section \ref{sec:ft}, we give more detailed statements Theorem \ref{th:type4} and Theorem \ref{th:type6}.

Based on the above result, we would like to propose the following conjecture
(see Problem 6.2 of \cite{morita99}).

\begin{conj} For any $k\not=2$, the equality
$
\mathfrak{t}_g(k)\cong \mathfrak{m}_g(k)
$
holds so that
$$
\mathrm{Ker}(\mathrm{Gr}\,\mathfrak{t}_g\twoheadrightarrow \mathfrak{m}_g)\cong \Q.
$$
\end{conj}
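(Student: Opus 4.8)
The plan is to recast the conjecture as a single statement about the homology of the Johnson-filtration Lie algebra. Write $L=\mathrm{Gr}\,\mathfrak{t}_g$ and let $K=\Ker(L\twoheadrightarrow\mathfrak{m}_g)$, a graded ideal; surjectivity is Hain's result \eqref{eq:nh}. As the associated graded of a lower central series, $L$ is generated in degree $1$, and we already know $K_1=0$ while $K_2\cong\Q$ is spanned by the Casson class $c$ (the generator detected by $d_1$), by Theorem \ref{th:hain}. The first observation is that $c$ is central. Indeed, the case $k=3$ (proved in \cite{morita99}) gives $K_3=0$, so $[L_1,c]\subset[L_1,K_2]\subset K_3=0$; since $L$ is generated by $L_1$, a Jacobi induction shows $[c,L]=0$. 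Hence the ideal generated by $c$ is exactly $\Q c$, and the conjecture is equivalent to the single assertion
\[
K_k=0\quad (k\geq 3).
\]

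For the central extension $0\to K\to L\to\mathfrak{m}_g\to 0$ of graded Lie algebras, the five-term exact sequence in homology reads
\[
H_2(L)\to H_2(\mathfrak{m}_g)\to K\to H_1(L)\to H_1(\mathfrak{m}_g)\to 0.
\]
Since the natural map is an isomorphism in degree $1$, the arrow $H_1(L)\to H_1(\mathfrak{m}_g)$ is an isomorphism, so $K\cong\mathrm{coker}\bigl(H_2(L)\to H_2(\mathfrak{m}_g)\bigr)$. By Hain's theorem (Theorem \ref{th:hain}) the Torelli Lie algebra $L$ is quadratically presented for $g$ large, so $H_2(L)$ is concentrated in degree $2$; consequently $K_k\cong H_2(\mathfrak{m}_g)_k$ for every $k\geq 3$. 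Therefore the conjecture is equivalent to the statement that $\mathfrak{m}_g$ is \emph{itself} quadratically presented, i.e. that $H_2(\mathfrak{m}_g)$ is concentrated in degree $2$, the only defining relation beyond Hain's quadratic ones being $c=0$. In degree $2$ the cokernel is precisely this extra relation, recovering $K_2\cong\Q c$ and confirming the reduction is consistent with the known low-degree data.

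This reduces the problem to a concrete, if difficult, target: $\mathfrak{m}_g$ is the graded Lie subalgebra of the derivation algebra $\hg$ (the stable target of the total Johnson homomorphism) generated by its degree-$1$ piece $[1^3]$, and one must show it has no defining relations in degrees $\geq 3$. I would attack this by combining the $\mathrm{Sp}(2g,\Q)$-module description of $\hg$ with an attempt to prove Koszulness: if $L$ is Koszul (as is expected but open), its quadratic dual controls all of $H_*(L)$, and one can hope to transport the degree-$2$ central extension structure to all degrees, forcing $H_2(\mathfrak{m}_g)$ into degree $2$. Failing a Koszulness proof, one could try to bound $H_2(\mathfrak{m}_g)_k$ directly by analyzing the relation module of the subalgebra of $\hg$ generated in degree $1$.

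The main obstacle is precisely this last step. Neither $\mathfrak{t}_g(k)$ nor $\mathfrak{m}_g(k)$ is known in closed form as an $\mathrm{Sp}$-representation for general $k$, their decompositions grow rapidly, and the degree-by-degree matching that succeeds through degree $6$ in Theorem \ref{th:main} cannot terminate. A structural input is indispensable, and the natural candidate---Koszulness of the Torelli Lie algebra, equivalently a proof that the Johnson-image subalgebra of $\hg$ is quadratically presented---is itself a hard open problem. I therefore expect the genuine difficulty of the conjecture to be entirely concentrated in establishing the quadratic presentation of $\mathfrak{m}_g$, with everything else being the formal homological reduction above.
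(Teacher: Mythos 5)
The statement you were handed is a \emph{conjecture} in the paper: the authors do not prove it, and they establish only the cases $k\le 6$ (Theorem \ref{th:main}, via Propositions \ref{prop:w4}--\ref{prop:w6} and large computer computations), so no complete proof was possible and you rightly did not claim one. What you did produce is a reduction, and it coincides with the paper's own framework in Section \ref{sec:h2}: identifying the conjecture with the vanishing of $H_2(\mathfrak{m}_g)_w$ for all $w\ge 3$ is exactly the content of Proposition \ref{prop:mi}, Corollary \ref{cor:h2} and Proposition \ref{prop:iw}, using the same five-term exact sequence and the same inputs (Hain's quadratic presentation, i.e.\ Corollary \ref{cor:22}, plus $\mathfrak{i}_g(3)=0$ from \cite{morita99}). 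Your reformulation --- that the only relation beyond Hain's quadratic ones should be the vanishing of the Casson class --- is precisely the paper's second formulation of the conjecture (the central extension of $\mathfrak{m}_g$ by the infinitesimal first $\mathrm{MMM}$ class; see also Remark \ref{rem:aftercor16}). Your closing admission that the genuine difficulty is the quadratic presentation of $\mathfrak{m}_g$ is accurate: that \emph{is} the open conjecture, and the paper's degree-by-degree method cannot settle it.

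One technical overstatement should be fixed. The five-term sequence of $0\to \mathfrak{i}_g\to \mathrm{Gr}\,\mathfrak{t}_g\to\mathfrak{m}_g\to 0$ ends in the coinvariants $H_1(\mathfrak{i}_g)_{\mathfrak{m}_g}=\mathfrak{i}_g/[\mathrm{Gr}\,\mathfrak{t}_g,\mathfrak{i}_g]$, not in $K=\mathfrak{i}_g$ itself. Your centrality argument for $c$ is fine ($[L_1,c]\subset K_3=0$ plus generation in degree $1$), but centrality of $c$ does not make the whole ideal $K$ central before you know $K=\Q c$, which is the thing being proved. So the clean claim ``$K_k\cong H_2(\mathfrak{m}_g)_k$ for every $k\ge 3$'' is unconditional only in the form of the exact sequence of Corollary \ref{cor:h2}, with the correction term $\sum_{j=3}^{k-1}[\mathfrak{i}_g(j),\mathfrak{t}_g(k-j)]$; it becomes an isomorphism only under the inductive hypothesis $K_j=0$ for $3\le j<k$, which is Proposition \ref{prop:iw}. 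Your asserted equivalence between the conjecture and concentration of $H_2(\mathfrak{m}_g)$ in weight $2$ does survive --- run the induction on weight using centrality of $c$ --- but as written your middle paragraph extracts it from the five-term sequence more directly than the sequence allows. Finally, on the speculative strategy: Hain's vanishing of higher Massey products for $g\ge 4$ is indeed the natural evidence for Koszulness of $\mathrm{Gr}\,\mathfrak{t}_g$, but even granting Koszulness, transporting $H_2$-concentration across the central extension to the quotient $\mathfrak{m}_g$ is not a formal step; that is exactly where the open content of the conjecture is concentrated, in agreement with your own assessment.
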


\begin{remark}\label{rem:aftercor16}
This is equivalent to the statement that Corollary \ref{cor:fi} continues to hold for all $k \geq 3$. 
\end{remark}

In the context of characteristic classes of the mapping class group, the above conjecture
can be translated as follows.
\begin{conj}[another formulation]
The Lie algebra
$\mathfrak{t}_g$ is isomorphic to the completion of the central extension of $\mathfrak{m}_g$
associated to the infinitesimal first $\mathrm{MMM}$ class defined in
$H^2(\mathfrak{m}_g)$.
\end{conj}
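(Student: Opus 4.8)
The plan is to prove the asserted translation by establishing that this second formulation is \emph{equivalent} to the preceding conjecture, and then to indicate how one would attack the statement they share. First I would analyze the graded ideal
$$
K:=\Ker\bigl(\mathrm{Gr}\,\mathfrak{t}_g\twoheadrightarrow \mathfrak{m}_g\bigr)=\bigoplus_{k\geq 1}\Ker\bigl(\mathfrak{t}_g(k)\rightarrow \mathfrak{m}_g(k)\bigr),
$$
where the decomposition is legitimate because \eqref{eq:nh} is surjective in every degree. Hain's presentation gives $\Ker(\mathfrak{t}_g(2)\rightarrow \mathfrak{m}_g(2))\cong\Q$, and the preceding conjecture is precisely the vanishing of the degree-$k$ component of $K$ for every $k\neq 2$; hence that conjecture holds if and only if $K\cong\Q$, concentrated in degree $2$. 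This is the reformulation I must leverage.

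Next I would observe that such a $K$ is automatically central. Since $K$ is an ideal, for $x\in\mathfrak{t}_g(j)$ with $j\geq 1$ and $\kappa\in K\subset \mathfrak{t}_g(2)$ we have $[x,\kappa]\in\mathfrak{t}_g(j+2)\cap K$, and this lies in degrees $\geq 3$ while $K$ is supported in degree $2$, so $[\mathrm{Gr}\,\mathfrak{t}_g,K]=0$. Granting $K\cong\Q$, the graded Lie algebra $\mathrm{Gr}\,\mathfrak{t}_g$ is therefore a central extension
$$
0\rightarrow \Q\rightarrow \mathrm{Gr}\,\mathfrak{t}_g\rightarrow \mathfrak{m}_g\rightarrow 0
$$
with $\Q$ in degree $2$, classified by a degree-$2$ class in $H^2(\mathfrak{m}_g)$.

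The key step is then to identify this extension class with the infinitesimal first $\mathrm{MMM}$ class. The extension corresponds to a $2$-cocycle $\omega\colon \Lambda^2\mathfrak{m}_g\rightarrow \Q$ which, by the degree constraint, is supported on $\mathfrak{m}_g(1)\wedge\mathfrak{m}_g(1)$ and is $\mathrm{Sp}(2g,\Q)$-invariant; it is nonzero because the generator of $K$ arises as a bracket of degree-$1$ elements that dies in $\mathfrak{m}_g(2)$, which is exactly the content of $\Ker(\mathfrak{t}_g(2)\rightarrow\mathfrak{m}_g(2))\cong\Q$. Since $H^1(\mathcal{K}_g;\Z)^{\mathcal{M}_g}\cong\Z$ is rationally generated by $d_1$, the secondary class attached to the vanishing of the first $\mathrm{MMM}$ class on $\mathcal{I}_g$, the relevant invariant part of $H^2(\mathfrak{m}_g)$ is one-dimensional and spanned by the infinitesimal avatar of that class; comparing brackets on $\mathfrak{t}_g(1)\wedge\mathfrak{t}_g(1)$ with their images in $\mathfrak{m}_g(2)$ pins $\omega$ down to a nonzero multiple of it. Finally, passing from $\mathrm{Gr}\,\mathfrak{t}_g$ to the Malcev Lie algebra $\mathfrak{t}_g$, which is complete with associated graded $\mathrm{Gr}\,\mathfrak{t}_g$, the extension lifts to the completed central extension of $\mathfrak{m}_g$ by the same class, yielding the second formulation.

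I expect the genuinely hard part to be not this translation but the statement the two formulations share, namely $K\cong\Q$. Theorem \ref{th:main} settles the degree-$k$ vanishing for $k=4,5,6$ by an explicit $\mathrm{Sp}$-representation-theoretic comparison, using Hain's finite presentation on the Torelli side and the Johnson-image description on the $\mathfrak{m}_g$ side. Pushing this to all $k$ requires controlling, in every degree simultaneously, both the relations propagated from degree $2$ of Hain's presentation and the cokernel of the Johnson homomorphism, the latter governed by subtle trace-type invariants whose full structure is unknown. This is the main obstacle, and the reason the assertion remains conjectural despite the equivalence above being within reach.
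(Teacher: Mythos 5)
The statement you are addressing is a \emph{conjecture}: the paper offers no proof of it, only the one-line assertion that it is a translation of the preceding conjecture into the language of characteristic classes. So the only provable content here is the equivalence of the two formulations, and that is exactly what your proposal supplies — correctly, in my view, and in more detail than the paper gives. Your reduction to $K=\mathfrak{i}_g\cong\Q$ concentrated in degree $2$, the observation that such a kernel is automatically central for degree reasons (the paper itself uses this in the proof of Corollary \ref{cor:h2}, where it notes that $\mathfrak{i}_g(2)\cong\Q$ lies in the center of $\mathfrak{t}_g$), and the classification of the resulting extension by a weight-$2$ class in $H^2(\mathfrak{m}_g)$ are all sound. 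The one step I would tighten is the identification of the extension class with the infinitesimal first $\mathrm{MMM}$ class: rather than routing through $H^1(\mathcal{K}_g;\Z)^{\mathcal{M}_g}\cong\Z$, the cleaner argument is internal to the paper. Proposition \ref{prop:mi} gives the exact sequence $0\rightarrow H_2(\mathrm{Gr}\,\mathfrak{t}_g)_2\rightarrow H_2(\mathfrak{m}_g)_2\rightarrow \Q\rightarrow 0$, and by Corollary \ref{cor:22} the subspace $H_2(\mathrm{Gr}\,\mathfrak{t}_g)_2=[1^6]+[1^4]+[1^2]+[2^21^2]$ contains no trivial summand, while $\wedge^2[1^3]$ contains exactly one copy of $[0]$; hence the $\mathrm{Sp}$-invariant part of $H^2(\mathfrak{m}_g)_2$ is one-dimensional, and the extension cocycle, being invariant and nonzero, is forced to be a nonzero multiple of the infinitesimal first $\mathrm{MMM}$ class once one knows that class is nonzero and of weight $2$. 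Your closing assessment is also accurate: the genuinely open content is the vanishing of $\mathfrak{i}_g(k)$ for all $k\geq 3$, of which the paper settles only $k\leq 6$, so no proof of the conjecture itself should be expected from either you or the paper.
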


Hain \cite{hain} considered the relative completion of the mapping class group 
with respect to the classical homomorphism $\mathcal{M}_g\rightarrow\mathrm{Sp}(2g,\Z)$.
He proved that the kernel of the natural surjective homomorphism $\mathfrak{t}_g\rightarrow \mathfrak{u}_g$
is isomorphic to $\Q$ where $\mathfrak{u}_g$ denotes the graded Lie algebra associated to
the Lie algebra of his relative completion. In this terminology,
the above conjecture is also equivalent to saying that the natural homomorphism
$$
\mathrm{Gr}\,\mathfrak{u}_g\rightarrow \mathfrak{m}_g,
$$
which exists because of the universality of $\mathfrak{u}_g$,
is an isomorphism.

The content of the present paper is roughly as follows. In Section $2$, we relate the difference
between the two filtrations of the Torelli group to the second homology group
$H_2(\mathfrak{m}_g)$
of the Lie algebra $\mathfrak{m}_g$. In Section $3$, we explain the method of proving the main Theorem \ref{th:main}.
Then in Sections $4, 5$ and  $6$, we prove the vanishing of the weight $4$, $5$ and $6$
parts of $H_2(\mathfrak{m}_g)$, respectively. Finally in Section $7$, we prove the main results.

In this paper, whenever we mention groups like
$\mathcal{M}_g, \mathcal{I}_g,\mathcal{M}_g(k),\mathcal{I}_g(k)$,
modules like 
$\mathfrak{t}_g(k), \mathfrak{m}_g(k)$
and also homomorhpsims like $\tau_g(k)$, which depend on
the genus $g$, we always assume that it is sufficiently large,
more precisely in a stable range with respect to
the property we consider, unless we describe the range of $g$
explicitly.

\section{The second homology groups of the Lie algebras $\mathfrak{t}_g, \mathfrak{m}_g$}\label{sec:h2}

In this section, we reduce the problem of determining the kernel of
the surjective homomorphism 
$\mathrm{Gr}\,\mathfrak{t}_g\twoheadrightarrow \mathfrak{m}_g$
to the computation of the second homology group $H_2(\mathfrak{m}_g)$ of the Lie algebra $\mathfrak{m}_g$.
Let us denote $\mathrm{Ker}(\mathrm{Gr}\,\mathfrak{t}_g\rightarrow \mathfrak{m}_g)$ by $\mathfrak{i}_g$ so that
we have a short exact sequence
\begin{equation}
0\rightarrow \mathfrak{i}_g
\rightarrow \mathrm{Gr}\,\mathfrak{t}_g\rightarrow\mathfrak{m}_g\rightarrow 0
\label{eq:itm}
\end{equation}
of the three graded Lie algebras
$$
\mathfrak{i}_g=\bigoplus_{k=1}^\infty\mathfrak{i}_g(k),\
\mathrm{Gr}\,\mathfrak{t}_g=\bigoplus_{k=1}^\infty\mathfrak{t}_g(k),\
\mathfrak{m}_g=\bigoplus_{k=1}^\infty\mathfrak{m}_g(k).
$$

Now it is a classical result of Johnson \cite{ja}\cite{j3} that 
$\mathfrak{t}_g(1)\cong \mathfrak{m}_g(1)\cong\wedge^3 H_\Q/H_\Q$
and hence $\mathfrak{i}_g(1)=0$. The module $\wedge^3 H_\Q/H_\Q$ is an
irreducible representation of $\mathrm{Sp}(2g,\Q)$ corresponding to the
Young diagram $[1^3]$. 

Hain proved the following fundamental result.

\begin{thm}[Hain \cite{hain}]
The Lie algebra $\mathfrak{t}_g\ (g\geq 6)$ is isomorphic to its associated
graded $\mathrm{Gr}\,\mathfrak{t}_g$ which has presentation
$$
\mathrm{Gr}\,\mathfrak{t}_g=\mathcal{L}(
[1^3])/\langle[1^6]+[1^4]+[1^2]+[2^21^2]\rangle
$$
where $\mathcal{L}([1^3])$ denotes the free Lie algebra generated by $[1^3]$ and
$\langle[1^6]+[1^4]+[1^2]+[2^21^2]\rangle$ is the ideal generated by
$$
[1^6]+[1^4]+[1^2]+[2^21^2]\subset \wedge^2[1^3].
$$
\label{th:hain}
\end{thm}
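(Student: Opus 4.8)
The plan is to realize $\mathfrak{t}_g$ through the prounipotent part of the relative (Malcev) completion of $\mathcal{M}_g$ with respect to the symplectic representation $\mathcal{M}_g\to \mathrm{Sp}(2g,\Q)$, and then to exploit the mixed Hodge structure that this completion carries because the moduli space is a smooth quasi-projective orbifold. Concretely, I would first form the extension $1\to \mathcal{U}_g\to \mathcal{G}_g\to \mathrm{Sp}(2g,\Q)\to 1$ of proalgebraic groups, let $\mathfrak{u}_g=\mathrm{Lie}\,\mathcal{U}_g$, and recall the comparison (the kernel of $\mathfrak{t}_g\twoheadrightarrow \mathfrak{u}_g$ being $\Q$) that relates $\mathfrak{t}_g$ to this relative picture. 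The point of passing to the relative completion is that it is functorial for the structure of $\mathcal{M}_g$ as an orbifold fundamental group, so that Deligne--Morgan--Hain Hodge theory endows $\mathfrak{t}_g$ with a mixed Hodge structure whose bracket is a morphism of MHS and whose weight grading is compatible with the filtration by the lower central series.

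The first substantial step is \emph{formality}, i.e. the assertion $\mathfrak{t}_g\cong \mathrm{Gr}\,\mathfrak{t}_g$. I would deduce this from a purity argument: by Johnson's computation the abelianization $H_1(\mathfrak{t}_g)=\mathfrak{t}_g(1)\cong[1^3]$ is pure of weight $-1$. Once the generators are concentrated in a single weight and the bracket preserves weights, the weight filtration must coincide with the filtration by the lower central series; an extension of mixed Hodge structures that is ``generated in weight $-1$'' then splits canonically, which is precisely the statement that $\mathfrak{t}_g$ is isomorphic to its associated graded. This is the mechanism by which Hodge theory forces formality, and it is where the hypothesis that $g$ be large (here $g\geq 6$) enters, to guarantee the purity of $H_1$ and the stable cohomological vanishings that the argument needs.

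Next I would show that the presentation is quadratic, namely $\mathrm{Gr}\,\mathfrak{t}_g=\mathcal{L}([1^3])/\langle R\rangle$ with $R\subset \wedge^2[1^3]$, and identify $R$. Generation in degree $1$ by $[1^3]$ is again Johnson's theorem; to see that all relations live in degree $2$, I would show that $H_2(\mathfrak{t}_g)$ is pure of weight $-2$, once more a consequence of the weight structure, since for a Lie algebra generated in weight $-1$ the relation module is $H_2$ in weight $-2$ and the absence of lower weights in $H_2$ means no relations in degrees $\geq 3$. The relation module is then the kernel of the bracket $\wedge^2[1^3]\to \mathfrak{t}_g(2)$, so it suffices to decompose $\wedge^2[1^3]$ into $\mathrm{Sp}(2g,\Q)$-irreducibles, determine $\mathfrak{t}_g(2)$ independently from the second Johnson homomorphism and the known second graded quotient, and read off the complementary summands, yielding $R\cong[1^6]+[1^4]+[1^2]+[2^21^2]$.

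I expect the main obstacle to be the precise computation of $R$, equivalently of $H_2$ of the relative completion in weight $-2$. This requires genuine input beyond formal manipulation: the low-degree cohomology of $\mathcal{M}_g$ with coefficients in the symplectic local systems $[1^3]$ and $\wedge^2[1^3]$, stability results in the sense of Harer, and the plethysm decomposition of $\wedge^2[1^3]$, all carried out in the stable range. Verifying that the listed summands generate the full relation ideal, and that no spurious relations appear in low degree, is the delicate point; the Hodge-theoretic purity statements reduce this to a finite representation-theoretic calculation, but that calculation, together with pinning down the exact genus bound, is where the real work resides.
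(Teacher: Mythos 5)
This statement is not proved in the paper at all: it is imported verbatim from Hain's work and cited as \cite{hain}, so there is no ``paper's own proof'' to compare against. Measured instead against Hain's actual argument, your outline is a faithful summary of his strategy: relative Malcev completion of $\mathcal{M}_g$ over $\mathrm{Sp}(2g,\Q)$, the mixed Hodge structure induced by the quasi-projectivity of moduli space, formality from weight considerations, quadratic presentation from the concentration of $H_2$ in weight $-2$, and identification of the relation module as the complement of $\mathfrak{t}_g(2)\cong[0]+[2^2]$ inside $\wedge^2[1^3]$. Two points deserve correction. First, your claim that a mixed Hodge structure ``generated in weight $-1$'' splits \emph{canonically} is overstated: the weight filtration of a rational MHS does not split canonically in general, and the splitting $\mathfrak{t}_g\cong\mathrm{Gr}\,\mathfrak{t}_g$ is precisely the content of Hain's formality theorem for pronilpotent Lie algebras in the category of MHS whose lower central series agrees with the weight filtration; this step cannot be waved through as formal nonsense. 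Second, the hypothesis $g\geq 6$ has nothing to do with the purity of $H_1$ (Johnson's computation $H_1(\mathcal{I}_g;\Q)\cong[1^3]$ already holds for $g\geq 3$); it enters in the cohomological computations with symplectic local systems needed to rule out cubic relations, and indeed Hain's presentation in lower genus is more complicated. With those caveats, your proposal is an accurate blueprint, but the two steps you explicitly defer---the splitting mechanism and the determination of the relations---constitute essentially all of the work, so this is a sketch of Hain's proof rather than a proof.
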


Here we recall a few facts about the homology of graded Lie algebras briefly.
Let us consider a graded Lie algebra
$$
\mathfrak{g}=\bigoplus_{k=1}^\infty \mathfrak{g}(k)
$$
satisfying the condition that
$
H_1(\mathfrak{g})\cong \mathfrak{g}(1).
$
Namely, we assume that $\mathfrak{g}$ is generated by the degree $1$ part $\mathfrak{g}(1)$
as a Lie algebra. Both the Lie algebras $\mathfrak{t}_g, \mathfrak{m}_g$ satisfy this condition.
The homology group of a graded Lie algebra is bigraded, where the first grading is
the usual homology degree while the second grading is induced from the
grading of the graded Lie algebra. We call the latter grading by weight, denoted simply by the
subscript $w$.
In particular, the second homology group has the following direct sum decomposition.
$$
H_2(\mathfrak{g})=\bigoplus_{w=2}^\infty H_2(\mathfrak{g})_w
$$
where
\begin{equation}
H_2(\mathfrak{g})_w=
\frac{\mathrm{Ker}\left(\partial: (\wedge^{2}\mathfrak{g})_w
\twoheadrightarrow \mathfrak{g}(w)\right)}{\mathrm{Im}\left(\partial: (\wedge^3\mathfrak{g})_w\rightarrow 
(\wedge^{2}\mathfrak{g})_w\right)}.
\label{eq:hw}
\end{equation}

In this terminology, the following is an immediate consequence of 
Theorem \ref{th:hain}.

\begin{cor}\label{cor:22}
$H_2(\mathrm{Gr}\,\mathfrak{t}_g)_2\cong [1^6]+[1^4]+[1^2]+[2^21^2]$ and
for any $w\geq 3$, $H_2(\mathrm{Gr}\,\mathfrak{t}_g)_w=0$.
\label{cor:hain}
\end{cor}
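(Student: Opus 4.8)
The plan is to read the corollary off Hain's presentation in Theorem \ref{th:hain} directly, via the Hopf formula (five-term exact sequence) for Lie algebra homology, using the acyclicity of a free Lie algebra in homological degrees $\geq 2$. First I would set $V=[1^3]$, let $\mathfrak{f}=\mathcal{L}(V)$ be the free Lie algebra, and let $\mathfrak{r}\subset\mathfrak{f}$ be the ideal generated by the relation space $R=[1^6]+[1^4]+[1^2]+[2^21^2]\subset\wedge^2 V$. Theorem \ref{th:hain} then gives a short exact sequence of graded Lie algebras
$$
0\to\mathfrak{r}\to\mathfrak{f}\to\mathrm{Gr}\,\mathfrak{t}_g\to 0,
$$
in which every map is $\mathrm{Sp}(2g,\Q)$-equivariant and weight-preserving, and both $\mathfrak{f}$ and $\mathrm{Gr}\,\mathfrak{t}_g$ are generated in weight $1$, so that the grading conventions of \eqref{eq:hw} apply.

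Next I would invoke the five-term exact sequence attached to this extension,
$$
H_2(\mathfrak{f})\to H_2(\mathrm{Gr}\,\mathfrak{t}_g)\to \mathfrak{r}/[\mathfrak{f},\mathfrak{r}]\to H_1(\mathfrak{f})\to H_1(\mathrm{Gr}\,\mathfrak{t}_g)\to 0.
$$
Since $\mathfrak{f}$ is free one has $H_2(\mathfrak{f})=0$, while $H_1(\mathfrak{f})\cong V\cong H_1(\mathrm{Gr}\,\mathfrak{t}_g)$ with the last arrow an isomorphism (both abelianizations are the weight-$1$ generating module). Exactness then produces an $\mathrm{Sp}(2g,\Q)$-equivariant, weight-preserving isomorphism $H_2(\mathrm{Gr}\,\mathfrak{t}_g)\cong\mathfrak{r}/[\mathfrak{f},\mathfrak{r}]$, identifying $H_2$ with the module of minimal generators of the defining ideal.

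Then I would compute $\mathfrak{r}/[\mathfrak{f},\mathfrak{r}]$ weight by weight. Writing $\mathfrak{r}=R+[\mathfrak{f},\mathfrak{r}]$ shows that $\mathfrak{r}/[\mathfrak{f},\mathfrak{r}]$ is a quotient of $R$, hence vanishes in every weight $\neq 2$. In weight $2$ one has $[\mathfrak{f},\mathfrak{r}]_2=0$, since the bracket strictly raises weight and $\mathfrak{r}$ lives in weights $\geq 2$, while $\mathfrak{r}_2=R$; thus the weight-$2$ quotient is exactly $R$. This yields $H_2(\mathrm{Gr}\,\mathfrak{t}_g)_2\cong R$ and $H_2(\mathrm{Gr}\,\mathfrak{t}_g)_w=0$ for all $w\geq 3$, which is the assertion. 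As an independent check of the weight-$2$ statement, formula \eqref{eq:hw} gives it at once: $(\wedge^3\mathrm{Gr}\,\mathfrak{t}_g)_2=0$, so $H_2(\mathrm{Gr}\,\mathfrak{t}_g)_2=\mathrm{Ker}\bigl(\wedge^2 V\to\mathfrak{t}_g(2)\bigr)=R$.

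The only step carrying genuine content is the vanishing in weights $\geq 3$: it is precisely the statement that $\mathrm{Gr}\,\mathfrak{t}_g$ admits no minimal relations beyond the quadratic ones, and this is exactly guaranteed by the fact that Hain's ideal is generated in weight $2$. Everything else is formal, relying only on the acyclicity $H_{\geq 2}(\mathfrak{f})=0$ of the free Lie algebra and on the naturality of the five-term sequence in the graded, $\mathrm{Sp}(2g,\Q)$-equivariant category; so I expect no real obstacle, which is consistent with the corollary being an immediate consequence of Theorem \ref{th:hain}.
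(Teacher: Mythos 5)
Your argument is correct and is exactly the standard deduction that the paper leaves implicit when it calls the corollary ``an immediate consequence'' of Theorem \ref{th:hain}: Hopf's formula $H_2(\mathfrak{f}/\mathfrak{r})\cong\mathfrak{r}/[\mathfrak{f},\mathfrak{r}]$ for a presented graded Lie algebra, together with $H_2(\mathcal{L}(V))=0$ and the fact that the relation ideal is generated in weight $2$. Nothing further is needed.
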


Now we consider the short exact sequence \eqref{eq:itm}.
\begin{prop}
The following short exact sequence
$$
0\rightarrow H_2(\mathrm{Gr}\,\mathfrak{t}_g)_2\rightarrow H_2(\mathfrak{m}_g)_2\rightarrow 
(H_1(\mathfrak{i}_g)_{\mathfrak{m}_g})_2\cong\Q
\rightarrow
0
$$
holds and for any $w\geq 3$
$$
H_2(\mathfrak{m}_g)_w\cong (H_1(\mathfrak{i}_g)_{\mathfrak{m}_g})_w.
$$
\label{prop:mi}
\end{prop}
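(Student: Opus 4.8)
The plan is to apply the Lyndon--Hochschild--Serre spectral sequence in Lie algebra homology to the short exact sequence \eqref{eq:itm}, keeping track of the weight grading throughout. This spectral sequence takes the form
$$
E^2_{p,q}=H_p(\mathfrak{m}_g;\,H_q(\mathfrak{i}_g))\ \Longrightarrow\ H_{p+q}(\mathrm{Gr}\,\mathfrak{t}_g),
$$
and since every map in \eqref{eq:itm} preserves weights, each page inherits a weight grading respected by the differentials. I would first extract the five-term exact sequence
$$
H_2(\mathrm{Gr}\,\mathfrak{t}_g)\xrightarrow{\ \alpha\ } H_2(\mathfrak{m}_g)\xrightarrow{\ \beta\ } \left(H_1(\mathfrak{i}_g)_{\mathfrak{m}_g}\right)\xrightarrow{\ \gamma\ } H_1(\mathrm{Gr}\,\mathfrak{t}_g)\xrightarrow{\ \delta\ } H_1(\mathfrak{m}_g)\rightarrow 0,
$$
in which $\beta$ is the transgression $d_2\colon E^2_{2,0}\to E^2_{0,1}$. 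Because $\mathfrak{i}_g(1)=0$, the weight $1$ parts of $\mathrm{Gr}\,\mathfrak{t}_g$ and $\mathfrak{m}_g$ agree; as both Lie algebras are generated in weight $1$, we have $H_1(\mathrm{Gr}\,\mathfrak{t}_g)\cong\mathfrak{t}_g(1)$ and $H_1(\mathfrak{m}_g)\cong\mathfrak{m}_g(1)$, so $\delta$ is an isomorphism. Hence $\gamma=0$, and exactness shows $\beta$ is surjective in every weight.

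Next I would pin down the two sides of $\beta$ in weight $2$. Since $\mathfrak{i}_g$ is concentrated in weights $\geq 2$, there are no decomposable brackets in weight $2$, so $H_1(\mathfrak{i}_g)_2=\mathfrak{i}_g(2)$; passing to coinvariants changes nothing in this lowest weight, as the image of the $\mathfrak{m}_g$-action lies in weights $\geq 3$. Hain's presentation (Theorem \ref{th:hain}) gives $\mathfrak{i}_g(2)=\mathrm{Ker}(\mathfrak{t}_g(2)\to\mathfrak{m}_g(2))\cong\Q$, which accounts for the claimed term $\left(H_1(\mathfrak{i}_g)_{\mathfrak{m}_g}\right)_2\cong\Q$.

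The decisive step is the injectivity of $\alpha$ in weight $2$, equivalently the vanishing there of the two remaining associated graded contributions to $H_2(\mathrm{Gr}\,\mathfrak{t}_g)$, namely $E^\infty_{1,1}$ and $E^\infty_{0,2}$. I would deduce this directly from $\mathfrak{i}_g(1)=0$: in weight $2$ the group $E^2_{1,1}=H_1(\mathfrak{m}_g;\,H_1(\mathfrak{i}_g))$ vanishes because any contributing tensor $\mathfrak{m}_g(a)\otimes H_1(\mathfrak{i}_g)(b)$ has total weight $a+b\geq 3$, while $E^2_{0,2}=\left(H_2(\mathfrak{i}_g)\right)_{\mathfrak{m}_g}$ vanishes because $\wedge^2\mathfrak{i}_g$ lives in weights $\geq 4$. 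Thus the filtration on $H_2(\mathrm{Gr}\,\mathfrak{t}_g)_2$ collapses onto $E^\infty_{2,0}=\mathrm{Ker}(\beta)$, so $\alpha$ is injective with image $\mathrm{Ker}(\beta)$ in weight $2$; together with the surjectivity of $\beta$ this yields the asserted short exact sequence. I expect this vanishing of spectral sequence terms to be the only genuinely load-bearing point, everything else being formal.

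Finally, for $w\geq 3$ I would invoke Corollary \ref{cor:hain}: since $H_2(\mathrm{Gr}\,\mathfrak{t}_g)_w=0$, all of its associated graded pieces vanish in weight $w$, in particular $E^\infty_{2,0}=\mathrm{Ker}(\beta)$. As $\beta$ is already surjective, it is then an isomorphism $H_2(\mathfrak{m}_g)_w\cong\left(H_1(\mathfrak{i}_g)_{\mathfrak{m}_g}\right)_w$, which completes the proof.
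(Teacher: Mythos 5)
Your argument is correct and follows essentially the same route as the paper: the paper simply writes down the five-term exact sequence of the extension \eqref{eq:itm}, uses Corollary \ref{cor:22} to see that $H_2(\mathrm{Gr}\,\mathfrak{t}_g)$ is concentrated in weight $2$, and notes that $H_1(\mathrm{Gr}\,\mathfrak{t}_g)\rightarrow H_1(\mathfrak{m}_g)$ is an isomorphism. Your extra spectral-sequence step --- checking that $E^\infty_{1,1}$ and $E^\infty_{0,2}$ vanish in weight $2$ because $\mathfrak{i}_g$ is concentrated in weights $\geq 2$, so that $H_2(\mathrm{Gr}\,\mathfrak{t}_g)_2\rightarrow H_2(\mathfrak{m}_g)_2$ is injective --- is precisely the point the paper leaves implicit behind ``the result follows,'' since the five-term sequence alone does not give injectivity of its leftmost map.
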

\begin{proof}
The $5$-term exact sequence of the Lie algebra extension \eqref{eq:itm}
is given by
$$
H_2(\mathrm{Gr}\,\mathfrak{t}_g)\overset{\text{Cor. \ref{cor:22}}}{=}H_2(\mathrm{Gr}\,\mathfrak{t}_g)_2
\rightarrow H_2(\mathfrak{m}_g)\rightarrow 
H_1(\mathfrak{i}_g)_{\mathfrak{m}_g}
\rightarrow
H_1(\mathrm{Gr}\,\mathfrak{t}_g)
\overset{\cong}{\rightarrow} H_1(\mathfrak{m}_g).
$$
The result follows from this.
\end{proof}

\begin{cor}
$$
H_2(\mathfrak{m}_g)_3=0 ,\quad
\mathfrak{i}_g(4)\cong H_2(\mathfrak{m}_g)_4
$$
and
for any $w\geq 4$, we have the following exact sequence.
$$
0\rightarrow  \sum_{k=3}^{w-1}\ [\mathfrak{i}_g(k), \mathfrak{t}_g(w-k)]
\rightarrow\mathfrak{i}_g(w)\rightarrow H_2(\mathfrak{m}_g)_w\rightarrow 0.
$$
\label{cor:h2}
\end{cor}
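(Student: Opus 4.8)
The plan is to feed the identification of Proposition \ref{prop:mi}, namely $H_2(\mathfrak{m}_g)_w\cong(H_1(\mathfrak{i}_g)_{\mathfrak{m}_g})_w$ for $w\geq 3$, into an explicit computation of the coinvariant module $H_1(\mathfrak{i}_g)_{\mathfrak{m}_g}$ weight by weight, and to read off the three assertions as the cases $w=3$, $w=4$ and $w\geq 4$ of a single formula. Throughout I would use only the low-degree facts already available: $\mathfrak{i}_g(1)=0$ from $\mathfrak{t}_g(1)\cong\mathfrak{m}_g(1)$, the line $\mathfrak{i}_g(2)\cong\Q$ coming from Hain's presentation, and $\mathfrak{i}_g(3)=0$ from the isomorphism $\mathfrak{t}_g(3)\cong\mathfrak{m}_g(3)$ of \cite{morita99}.

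First I would unwind the coinvariants. Since the $\mathfrak{m}_g$-action on $H_1(\mathfrak{i}_g)=\mathfrak{i}_g/[\mathfrak{i}_g,\mathfrak{i}_g]$ is induced by the adjoint action of $\mathrm{Gr}\,\mathfrak{t}_g$ on its ideal $\mathfrak{i}_g$ (lifting $\mathfrak{m}_g(k)$ to $\mathfrak{t}_g(k)$ via the surjection \eqref{eq:itm}), the weight $w$ part of the coinvariants is
$$
(H_1(\mathfrak{i}_g)_{\mathfrak{m}_g})_w=\mathfrak{i}_g(w)\Big/\sum_{k=1}^{w-1}[\mathfrak{t}_g(k),\mathfrak{i}_g(w-k)],
$$
where the bracket terms already absorb the weight $w$ part of $[\mathfrak{i}_g,\mathfrak{i}_g]$ because $\mathfrak{i}_g(k)\subseteq\mathfrak{t}_g(k)$. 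Next I would trim this denominator: the summands whose $\mathfrak{i}_g$-factor has weight $1$ or $3$ vanish, so after antisymmetry and reindexing only the term $[\mathfrak{t}_g(w-2),\mathfrak{i}_g(2)]$ survives besides $\sum_{k=3}^{w-1}[\mathfrak{i}_g(k),\mathfrak{t}_g(w-k)]$ (the $k=3$ summand of the latter being itself zero). The second piece is exactly the subobject appearing in the desired exact sequence, so everything comes down to eliminating the first.

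The hard part, and the only genuine input beyond bookkeeping, is the claim that $\mathfrak{i}_g(2)$ is central in $\mathrm{Gr}\,\mathfrak{t}_g$, which forces $[\mathfrak{t}_g(w-2),\mathfrak{i}_g(2)]=0$. I would argue as follows: as $\mathfrak{i}_g$ is an ideal one has $[\mathfrak{t}_g(1),\mathfrak{i}_g(2)]\subseteq\mathfrak{i}_g(3)=0$, and the centralizer of the line $\mathfrak{i}_g(2)$ is a Lie subalgebra by the Jacobi identity; since it contains the degree $1$ generators $\mathfrak{t}_g(1)$ and $\mathrm{Gr}\,\mathfrak{t}_g$ is generated in degree $1$ (by Hain's presentation), this centralizer is all of $\mathrm{Gr}\,\mathfrak{t}_g$. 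This is precisely why the sum in the statement can be taken to start at $k=3$ rather than $k=2$.

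With the denominator collapsed to $\sum_{k=3}^{w-1}[\mathfrak{i}_g(k),\mathfrak{t}_g(w-k)]$, Proposition \ref{prop:mi} gives, for each $w\geq 4$, the short exact sequence
$$
0\rightarrow\sum_{k=3}^{w-1}[\mathfrak{i}_g(k),\mathfrak{t}_g(w-k)]\rightarrow\mathfrak{i}_g(w)\rightarrow H_2(\mathfrak{m}_g)_w\rightarrow 0,
$$
the left map being the inclusion of the subspace and the surjection the projection to the coinvariants composed with the isomorphism of Proposition \ref{prop:mi}. Finally I would extract the two remaining assertions as degenerate cases of the same formula: at $w=3$ the numerator $\mathfrak{i}_g(3)$ already vanishes, giving $H_2(\mathfrak{m}_g)_3=0$; at $w=4$ the subobject reduces to $[\mathfrak{i}_g(3),\mathfrak{t}_g(1)]=0$, giving $\mathfrak{i}_g(4)\cong H_2(\mathfrak{m}_g)_4$.
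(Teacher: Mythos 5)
Your proposal is correct and follows essentially the same route as the paper: identify $H_2(\mathfrak{m}_g)_w$ with the weight-$w$ coinvariants via Proposition \ref{prop:mi}, compute the denominator $\sum_k[\mathfrak{t}_g(k),\mathfrak{i}_g(w-k)]$, and kill the unwanted terms using $\mathfrak{i}_g(1)=\mathfrak{i}_g(3)=0$ and the centrality of $\mathfrak{i}_g(2)$. The only (harmless) difference is that you rederive the centrality of $\mathfrak{i}_g(2)$ from $\mathfrak{i}_g(3)=0$ and degree-one generation, whereas the paper simply invokes the known fact that $\mathfrak{i}_g(2)\cong\Q$ lies in the center of $\mathfrak{t}_g$.
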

\begin{proof}
It was shown in \cite{morita99} (see also a related result of \cite{sakasai})
that $\mathfrak{t}_g(3)\cong\mathfrak{m}_g(3)$ and hence $\mathfrak{i}_g(3)=0$.
It follows that $H_2(\mathfrak{m}_g)_3\cong (H_1(\mathfrak{i}_g)_{\mathfrak{m}_g})_3=0$.
As for the case $w=4$, we have
$$
H_2(\mathfrak{m}_g)_4\cong (H_1(\mathfrak{i}_g)_{\mathfrak{m}_g})_4
=\mathfrak{i}_g(4)
$$
because $\mathfrak{i}_g(2)\cong\Q$ is contained in the center of $\mathfrak{t}_g$.
Finally, for any $w\geq 4$, we have
$$
H_2(\mathfrak{m}_g)_w\cong (H_1(\mathfrak{i}_g)_{\mathfrak{m}_g})_w
\cong \mathfrak{i}_g(w)/ \sum_{k=3}^{w-1}\ [\mathfrak{i}_g(k), \mathfrak{t}_g(w-k)].
$$
This completes the proof.
\end{proof}

In general, we have the following

\begin{prop}
Assume $\mathfrak{i}_g(k)=0\ $ for $k=3,4,\ldots,w-1\ (w\geq 4)$. Then we have
$$
\mathfrak{i}_g(w)\cong H_2(\mathfrak{m}_g)_w.
$$
\label{prop:iw}
\end{prop}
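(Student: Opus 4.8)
The plan is to deduce this directly from the exact sequence already packaged in Corollary \ref{cor:h2}. For any $w\geq 4$ that corollary furnishes the short exact sequence
$$
0\rightarrow \sum_{k=3}^{w-1}\ [\mathfrak{i}_g(k), \mathfrak{t}_g(w-k)]
\rightarrow \mathfrak{i}_g(w)\rightarrow H_2(\mathfrak{m}_g)_w\rightarrow 0,
$$
so the only thing to establish is that, under the stated hypothesis, the leftmost term vanishes. Once that is done the sequence collapses to the asserted isomorphism $\mathfrak{i}_g(w)\cong H_2(\mathfrak{m}_g)_w$.

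First I would check the bookkeeping on the summation range. The index $k$ in the sum runs over $3,4,\ldots,w-1$, which is exactly the range in which we are assuming $\mathfrak{i}_g(k)=0$. Each summand is the image of the Lie bracket $[\mathfrak{i}_g(k),\mathfrak{t}_g(w-k)]$, and its first factor $\mathfrak{i}_g(k)$ is zero for every such $k$; hence each bracket $[\mathfrak{i}_g(k),\mathfrak{t}_g(w-k)]$ is the zero subspace of $\mathfrak{t}_g(w)$, regardless of the companion factor $\mathfrak{t}_g(w-k)$. Consequently the entire sum is $0$, and the three-term sequence above reduces to $\mathfrak{i}_g(w)\cong H_2(\mathfrak{m}_g)_w$.

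Honestly, there is no serious obstacle here: all the substantive content—identifying the quotient $\mathfrak{i}_g(w)\big/\sum_{k}[\mathfrak{i}_g(k),\mathfrak{t}_g(w-k)]$ with $(H_1(\mathfrak{i}_g)_{\mathfrak{m}_g})_w$, and the latter in turn with $H_2(\mathfrak{m}_g)_w$ via the five-term sequence of the extension \eqref{eq:itm}—has already been carried out in Proposition \ref{prop:mi} and Corollary \ref{cor:h2}. The present proposition is simply the inductive device one uses to climb the weight filtration: it records that once the kernel $\mathfrak{i}_g$ is known to vanish in all intermediate weights $3,\ldots,w-1$, the next weight $\mathfrak{i}_g(w)$ is computed purely by $H_2(\mathfrak{m}_g)_w$, with no bracket corrections surviving. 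The single point worth verifying is the match between the summation range and the vanishing range, which I have confirmed above.
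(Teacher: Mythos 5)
Your argument is correct and is exactly the paper's own proof: the authors simply declare Proposition \ref{prop:iw} an immediate consequence of Corollary \ref{cor:h2}, and your observation that the hypothesis $\mathfrak{i}_g(k)=0$ for $k=3,\ldots,w-1$ kills every summand $[\mathfrak{i}_g(k),\mathfrak{t}_g(w-k)]$ in that corollary's exact sequence is precisely the intended (and only) step. Nothing further is needed.
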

\begin{proof}
This is an immediate consequence of Corollary \ref{cor:h2}.
\end{proof}

\section{Method of proving 
Theorem \ref{th:main}}\label{sec:method}

To prove Theorem \ref{th:main}, it is enough to show that
$$
H_2(\mathfrak{m}_g)_w=0 \ (w=4,5,6)
$$
by Proposition \ref{prop:iw}. We prove this in 
Sections $4,5,6$.
To do so, 
for technical reasons regarding computer computations, 
we consider the following variants of our Lie algebra. 

Let $\mathcal{M}_{g,1}$ be the mapping class group of a genus $g$ compact oriented surface
with one boundary component and let $\mathcal{I}_{g,1}$ be its Torelli subgroup.
Then we have the Johnson filtration $\{\mathcal{M}_{g,1}(k)\}_k$ for the former and the lower central
series $\{\mathcal{I}_{g,1}(k)\}_k$ for the latter. We set
\begin{align*}
\mathrm{Gr}\,\mathfrak{t}_{g,1}&=\bigoplus_{k=1}^\infty \mathfrak{t}_{g,1}(k),\quad \mathfrak{t}_{g,1}(k)
=(\mathcal{I}_{g,1}(k)/\mathcal{I}_{g,1}(k+1))\otimes\Q\\
\mathfrak{m}_{g,1}&=\bigoplus_{k=1}^\infty \mathfrak{m}_{g,1}(k),\quad 
\mathfrak{m}_{g,1}(k)=(\mathcal{M}_{g,1}(k)/\mathcal{M}_{g,1}(k+1))\otimes\Q
\end{align*}
where $\mathfrak{t}_{g,1}$ denotes the Malcev Lie algebra of $\mathcal{I}_{g,1}$.
Define $\mathfrak{i}_{g,1}=\mathrm{Ker}(\mathrm{Gr}\,\mathfrak{t}_{g,1}\twoheadrightarrow\mathfrak{m}_{g,1})$
so that we have a short exact sequence
\begin{equation}
0\rightarrow \mathfrak{i}_{g,1}\rightarrow \mathrm{Gr}\,\mathfrak{t}_{g,1}\rightarrow\mathfrak{m}_{g,1}\rightarrow 0
\label{eq:itm1}
\end{equation}
of graded Lie algebras, where the surjectivity of the natural homomorphism 
$\mathrm{Gr}\,\mathfrak{t}_{g,1}\rightarrow\mathfrak{m}_{g,1}$ is again due to Hain.

\begin{prop}
The natural homomorphism $\mathfrak{i}_{g,1}\rightarrow \mathfrak{i}_{g}$
is an isomorphism so that
$\mathfrak{i}_{g,1}(k)\cong \mathfrak{i}_{g}(k)$ for any $k$.
\label{prop:ii}
\end{prop}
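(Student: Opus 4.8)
The plan is to compare the two defining short exact sequences \eqref{eq:itm} and \eqref{eq:itm1} by means of the capping homomorphism. Filling the boundary component of $\Sigma_{g,1}$ with a disc induces a surjection $\mathcal{I}_{g,1}\to\mathcal{I}_g$, whose kernel is generated by the point-pushing subgroup $\pi_1\Sigma_g$ and the boundary Dehn twist, both of which lie in the Torelli group; this surjection respects both the lower central series and the Johnson filtration. Passing to the associated graded Lie algebras gives a commutative diagram with exact rows
$$
\begin{CD}
0 @>>> \mathfrak{i}_{g,1} @>>> \mathrm{Gr}\,\mathfrak{t}_{g,1} @>q>> \mathfrak{m}_{g,1} @>>> 0\\
@. @VV{\alpha}V @VV{\beta}V @VV{\gamma}V @.\\
0 @>>> \mathfrak{i}_{g} @>>> \mathrm{Gr}\,\mathfrak{t}_{g} @>>> \mathfrak{m}_{g} @>>> 0
\end{CD}
$$
where $\alpha$ is the map in the statement. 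Here $\beta$ is surjective because $\mathcal{I}_{g,1}\to\mathcal{I}_g$ is, and $\gamma$ is then surjective as well, since the composite $\mathrm{Gr}\,\mathfrak{t}_{g,1}\to\mathfrak{m}_{g,1}\to\mathfrak{m}_g$ coincides with the surjection $\mathrm{Gr}\,\mathfrak{t}_{g,1}\to\mathrm{Gr}\,\mathfrak{t}_g\to\mathfrak{m}_g$.

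Applying the snake lemma and using $\mathrm{Coker}\,\beta=\mathrm{Coker}\,\gamma=0$ yields the exact sequence
$$
0\rightarrow \mathrm{Ker}\,\alpha\rightarrow \mathrm{Ker}\,\beta\rightarrow \mathrm{Ker}\,\gamma\rightarrow \mathrm{Coker}\,\alpha\rightarrow 0.
$$
Hence $\alpha$ is an isomorphism if and only if the induced map $\mathrm{Ker}\,\beta\to\mathrm{Ker}\,\gamma$ is. Equivalently, a direct diagram chase shows that $\alpha$ is always surjective, with $\mathrm{Ker}\,\alpha\cong\mathfrak{i}_{g,1}\cap\mathrm{Ker}\,\beta$; so the entire problem reduces to showing that the capping kernel $\mathrm{Ker}\,\beta$ is mapped \emph{injectively} by $q$ into $\mathfrak{m}_{g,1}$, i.e. that $\mathfrak{i}_{g,1}\cap\mathrm{Ker}\,\beta=0$.

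To analyze these kernels I would use the degree-$1$ identifications $\mathfrak{t}_{g,1}(1)=\mathfrak{m}_{g,1}(1)=\wedge^3 H_{\Q}$ and $\mathfrak{t}_g(1)=\mathfrak{m}_g(1)=\wedge^3 H_{\Q}/H_{\Q}$, under which both $\beta$ and $\gamma$ are, in degree $1$, the contraction $\wedge^3 H_{\Q}\to\wedge^3 H_{\Q}/H_{\Q}$; its kernel is the copy of $H_{\Q}\cong[1]$ spanned by the elements $x\wedge\omega$, where $\omega\in\wedge^2 H_{\Q}$ is the symplectic form. Since both graded Lie algebras are generated in degree $1$, comparing Hain's presentation (Theorem \ref{th:hain}) on the Torelli side with the derivation-algebra description of the Johnson images on the $\mathfrak{m}$-side lets one identify $\mathrm{Ker}\,\beta$ and $\mathrm{Ker}\,\gamma$ as the ideals generated by this degree-$1$ copy of $H_{\Q}$ (modulo the extra relations that capping introduces in the closed case). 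Because $\mathfrak{i}_{g,1}(1)=0$, the map $q$ is the identity in degree $1$, so $\mathrm{Ker}\,\beta\to\mathrm{Ker}\,\gamma$ is an isomorphism on the generators, and the task is to propagate this to all weights.

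The main obstacle is precisely this propagation: proving $q|_{\mathrm{Ker}\,\beta}$ is injective, i.e. that the discrepancy Lie algebra $\mathfrak{i}_{g,1}$ meets the capping kernel only in $0$. Conceptually this should hold because the difference between the two filtrations is an \emph{interior} phenomenon governed by the Casson invariant and the first $\mathrm{MMM}$ class, which are defined on the closed surface and pull back under capping, whereas $\mathrm{Ker}\,\beta$ is the boundary-supported ideal generated by the contraction classes $x\wedge\omega$; the two are expected to be transverse. To make this rigorous I would compare the two ideals weight by weight as $\mathrm{Sp}(2g,\Q)$-modules and exploit that $\mathfrak{i}_{g,1}$ is concentrated in low degrees, with $\mathfrak{i}_{g,1}(2)\cong\Q$ central in $\mathfrak{t}_{g,1}$. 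Verifying that this central class, together with the brackets generating $\mathfrak{i}_{g,1}$, avoids the $\omega$-contraction ideal is where the real content lies; once $\mathrm{Ker}\,\beta\cap\mathfrak{i}_{g,1}=0$ is established, the snake lemma immediately gives $\mathfrak{i}_{g,1}(k)\cong\mathfrak{i}_{g}(k)$ in every degree.
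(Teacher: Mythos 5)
Your setup is the right one --- the paper also compares the two defining short exact sequences via a commutative diagram and a diagram chase --- but the proposal stops exactly where the proof has to begin. Everything hinges on the map $\mathrm{Ker}\,\beta\rightarrow\mathrm{Ker}\,\gamma$ between the two capping kernels being an isomorphism, and you leave this as ``where the real content lies,'' offering only a heuristic about transversality of the boundary-supported ideal and the discrepancy algebra. The paper does not prove this by a weight-by-weight $\mathrm{Sp}$-module comparison; it quotes two structural theorems that identify both kernels with one and the same object. On the Torelli side, Hain proved that the group extension $1\rightarrow\pi_1 T_1\Sigma_g\rightarrow\mathcal{I}_{g,1}\rightarrow\mathcal{I}_g\rightarrow 1$ induces an exact sequence of Malcev Lie algebras $0\rightarrow\mathfrak{p}_{g,1}\rightarrow\mathfrak{t}_{g,1}\rightarrow\mathfrak{t}_g\rightarrow 0$, hence $\mathrm{Ker}\,\beta\cong\mathrm{Gr}\,\mathfrak{p}_{g,1}$; on the Johnson-filtration side, the theorem of Asada--Kaneko (and Labute) gives $\mathrm{Ker}(\mathfrak{m}_{g,1}\rightarrow\mathfrak{m}_g)\cong\mathrm{Gr}\,\mathfrak{p}_{g,1}$ as well, with the induced map between the two copies the identity. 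Without these two inputs your plan would essentially have to reprove them, and the ``$\omega$-contraction ideal'' picture you sketch does not obviously close up: it is not clear that the ideal generated by the degree-one copy of $H_{\Q}$ exhausts either kernel, nor that the central class in $\mathfrak{i}_{g,1}(2)$ avoids it, without already knowing the Hain and Asada--Kaneko descriptions.

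There is also a logical slip in the reduction itself. You assert that ``a direct diagram chase shows that $\alpha$ is always surjective'' and thereby reduce the whole problem to the injectivity statement $\mathfrak{i}_{g,1}\cap\mathrm{Ker}\,\beta=0$. This is not right: from your own snake-lemma sequence, $\mathrm{Coker}\,\alpha$ is the cokernel of $\mathrm{Ker}\,\beta\rightarrow\mathrm{Ker}\,\gamma$, so surjectivity of $\alpha$ is exactly as nontrivial as surjectivity of the map between the capping kernels. Concretely, a lift $x\in\mathrm{Gr}\,\mathfrak{t}_{g,1}$ of $y\in\mathfrak{i}_g$ satisfies $q(x)\in\mathrm{Ker}\,\gamma$, and to correct $x$ into $\mathfrak{i}_{g,1}$ you must realize $q(x)$ as $q(x')$ for some $x'\in\mathrm{Ker}\,\beta$ --- precisely the surjectivity you have discarded. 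Both halves of the isomorphism therefore come down to the same fact, that $\mathrm{Ker}\,\beta\rightarrow\mathrm{Ker}\,\gamma$ is the identity of $\mathrm{Gr}\,\mathfrak{p}_{g,1}$, and that fact is the missing ingredient.
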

\begin{proof}
Recall that the kernel of the natural surjection $\mathcal{I}_{g,1}\rightarrow \mathcal{I}_g$
is known to be isomorphic to $\pi_1 T_1\Sigma_g$ where $T_1\Sigma_g$ denotes the unit tangent bundle of
$\Sigma_g$. Thus we have a short exact sequence
$1\rightarrow \pi_1 T_1\Sigma_g\rightarrow \mathcal{I}_{g,1}\rightarrow \mathcal{I}_g\rightarrow 1$.
Let $\mathfrak{p}_{g,1}$ be the Malcev Lie algebra of $\pi_1 T_1\Sigma_g$
which is a central extension of the Malcev Lie algebra $\mathfrak{p}_g$ of $\pi_1\Sigma_g$. 
Hain \cite{hain} showed that this induces 
a short exact sequence 
$0\rightarrow \mathfrak{p}_{g,1}\rightarrow \mathfrak{t}_{g,1}\rightarrow \mathfrak{t}_{g}\rightarrow 0$
which in turn induces a short exact sequence of their associated graded
Lie algebras as depicted in 
\eqref{eq:cd}.
\begin{equation}
\begin{CD}
@. @.   0 @. 0@.\\
@. @. @VVV @VVV @.\\
@.   @. \mathfrak{i}_{g,1} @>{\cong}>> \mathfrak{i}_{g}
@.\\
@. @. @VVV @VVV @.\\
0 @>>> \mathrm{Gr}\,\mathfrak{p}_{g,1}   @>>> \mathrm{Gr}\,\mathfrak{t}_{g,1} @>>> \mathrm{Gr}\,\mathfrak{t}_{g}
@>>> 0\\
@. @| @VVV @VVV @.\\
0 @>>> \mathrm{Gr}\,\mathfrak{p}_{g,1} @>>> \mathfrak{m}_{g,1} @>>> 
\mathfrak{m}_{g} @>>> 0\\
@. @. @VVV @VVV @.\\
@.   @. 0 @. 0
\end{CD}
\label{eq:cd}
\end{equation}
On the other hand,
a result of Asada and Kaneko (and Labute) in \cite{ak} implies that
the kernel of the natural surjection $\mathfrak{m}_{g,1}\rightarrow\mathfrak{m}_{g}$
is isomorphic to $\mathrm{Gr}\,\mathfrak{p}_{g,1}$
(see \cite{morita99}\cite{mss4} for details).
Thus the two rows as well as the two columns of the commutative diagram \eqref{eq:cd} are
all short exact sequences. Then it is easy to see from this fact that the homomorphism
$\mathfrak{i}_{g,1}\rightarrow\mathfrak{i}_g$ is an isomorphism. This completes the proof.
\end{proof}

Now we consider the short exact sequence \eqref{eq:itm1}.
\begin{prop}
For any $w\geq 3$,
$$
H_2(\mathfrak{m}_{g,1})_w\cong (H_1(\mathfrak{i}_{g,1})_{\mathfrak{m}_{g,1}})_w.
$$
\label{prop:mi1}
\end{prop}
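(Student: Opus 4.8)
The plan is to mimic exactly the argument of Proposition~\ref{prop:mi}, but now applied to the bounded surface version \eqref{eq:itm1}. The statement to be proved, that for all $w\geq 3$ we have $H_2(\mathfrak{m}_{g,1})_w\cong (H_1(\mathfrak{i}_{g,1})_{\mathfrak{m}_{g,1}})_w$, is the analogue of the high-weight half of Proposition~\ref{prop:mi}, so the same $5$-term exact sequence machinery should carry over once I verify the two inputs that made the closed-surface argument work.

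First I would write down the $5$-term exact sequence in low degrees associated to the short exact sequence of graded Lie algebras \eqref{eq:itm1}, namely
$$
H_2(\mathrm{Gr}\,\mathfrak{t}_{g,1})\rightarrow H_2(\mathfrak{m}_{g,1})\rightarrow (H_1(\mathfrak{i}_{g,1}))_{\mathfrak{m}_{g,1}}\rightarrow H_1(\mathrm{Gr}\,\mathfrak{t}_{g,1})\overset{\cong}{\rightarrow} H_1(\mathfrak{m}_{g,1}).
$$
This is weight-graded, so I may read it off in each weight $w$ separately. The rightmost map is an isomorphism because both $\mathrm{Gr}\,\mathfrak{t}_{g,1}$ and $\mathfrak{m}_{g,1}$ are generated in degree $1$ with the same degree-$1$ part, exactly as in the closed case; hence the connecting map $H_2(\mathfrak{m}_{g,1})\to (H_1(\mathfrak{i}_{g,1}))_{\mathfrak{m}_{g,1}}$ is surjective. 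So everything reduces to controlling the term $H_2(\mathrm{Gr}\,\mathfrak{t}_{g,1})$.

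The key step, and I expect it to be the main obstacle, is establishing the analogue of Corollary~\ref{cor:22} for the bounded surface: that $H_2(\mathrm{Gr}\,\mathfrak{t}_{g,1})_w=0$ for all $w\geq 3$. Once this vanishing is in hand, exactness of the $5$-term sequence in each weight $w\geq 3$ forces $H_2(\mathfrak{m}_{g,1})_w\to (H_1(\mathfrak{i}_{g,1})_{\mathfrak{m}_{g,1}})_w$ to be an isomorphism (injectivity from the vanishing on the left, surjectivity from the isomorphism on the right), which is precisely the claim. For the vanishing, I would invoke Hain's presentation. The honest subtlety is that Theorem~\ref{th:hain} is stated for the closed surface $\mathfrak{t}_g$, whereas here we need the corresponding statement for $\mathfrak{t}_{g,1}$; the relation between the two is governed by the central extension $0\to\mathrm{Gr}\,\mathfrak{p}_{g,1}\to\mathrm{Gr}\,\mathfrak{t}_{g,1}\to\mathrm{Gr}\,\mathfrak{t}_g\to 0$ appearing as the middle row of \eqref{eq:cd}, together with Hain's own presentation result for the bounded case. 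The clean route is to use that $\mathrm{Gr}\,\mathfrak{t}_{g,1}$ is also quadratically presented by Hain, so that its second homology is concentrated in weight $2$, giving $H_2(\mathrm{Gr}\,\mathfrak{t}_{g,1})_w=0$ for $w\geq 3$ directly.

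Alternatively, and this is the cleaner way to avoid re-deriving a presentation, I would combine Proposition~\ref{prop:ii} with the already-established Proposition~\ref{prop:mi}. Since Proposition~\ref{prop:ii} gives $\mathfrak{i}_{g,1}(k)\cong\mathfrak{i}_g(k)$ for all $k$, and since the $\mathfrak{m}_{g,1}$-action on $\mathfrak{i}_{g,1}$ is compatible with the $\mathfrak{m}_g$-action on $\mathfrak{i}_g$ through the surjection $\mathfrak{m}_{g,1}\twoheadrightarrow\mathfrak{m}_g$, I would argue that the coinvariants $(H_1(\mathfrak{i}_{g,1})_{\mathfrak{m}_{g,1}})_w$ agree with $(H_1(\mathfrak{i}_g)_{\mathfrak{m}_g})_w$; combined with the column exactness of \eqref{eq:cd} relating $H_2(\mathfrak{m}_{g,1})_w$ and $H_2(\mathfrak{m}_g)_w$ in weights $w\geq 3$, the identification in Proposition~\ref{prop:mi} then transfers verbatim. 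Either way, the substantive work is confined to weight $2$ (where the extra $\mathrm{Gr}\,\mathfrak{p}_{g,1}$ contribution lives), while weights $w\geq 3$ follow formally from the vanishing of $H_2(\mathrm{Gr}\,\mathfrak{t}_{g,1})$ in those weights.
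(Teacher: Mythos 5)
Your primary argument is exactly the paper's proof: one invokes Hain's result that $\mathrm{Gr}\,\mathfrak{t}_{g,1}$ is quadratically presented to get $H_2(\mathrm{Gr}\,\mathfrak{t}_{g,1})\cong H_2(\mathrm{Gr}\,\mathfrak{t}_{g,1})_2$, and then reads off the weight-graded $5$-term exact sequence of \eqref{eq:itm1} in each weight $w\geq 3$. (Be aware that your suggested alternative route would be circular in the paper's logical order, since the comparison of $H_2(\mathfrak{m}_{g,1})_w$ with $H_2(\mathfrak{m}_g)_w$ in Proposition \ref{prop:h2g1} is itself deduced from the present proposition together with Proposition \ref{prop:mi}; your first, quadratic-presentation route is the correct one.)
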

\begin{proof}
In \cite[Corollary 7.8]{hain}, Hain showed that $\mathrm{Gr}\,\mathfrak{t}_{g,1}$ is quadratically presented. 
This implies that $H_2(\mathrm{Gr}\,\mathfrak{t}_{g,1}) \cong H_2(\mathrm{Gr}\,\mathfrak{t}_{g,1})_2 $. 
Then the $5$-term exact sequence of the Lie algebra extension \eqref{eq:itm1} is given by
\begin{align*}
H_2(\mathrm{Gr}\,\mathfrak{t}_{g,1})=H_2(\mathrm{Gr}\,\mathfrak{t}_{g,1})_2
\rightarrow H_2(\mathfrak{m}_{g,1})\rightarrow 
H_1(\mathfrak{i}_{g,1})_{\mathfrak{m}_{g,1}}
\rightarrow
H_1(\mathrm{Gr}\,\mathfrak{t}_{g,1})
\overset{\cong}{\rightarrow} H_1(\mathfrak{m}_{g,1}).
\end{align*}
The result follows from this.
\end{proof}

\begin{prop}
The natural homomorphism
$$
H_2(\mathfrak{m}_{g,1})_w\rightarrow H_2(\mathfrak{m}_{g})_w
$$
is an isomorphism for all $w\geq 3$.
\label{prop:h2g1}
\end{prop}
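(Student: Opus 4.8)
The plan is to deduce the statement from the coinvariant descriptions already established, combined with the structural fact, read off from the diagram \eqref{eq:cd}, that passing to the bordered surface does not alter the relevant $\mathfrak{m}$-module. First I would invoke Propositions \ref{prop:mi} and \ref{prop:mi1}, which for $w\geq 3$ identify $H_2(\mathfrak{m}_{g,1})_w\cong (H_1(\mathfrak{i}_{g,1})_{\mathfrak{m}_{g,1}})_w$ and $H_2(\mathfrak{m}_g)_w\cong (H_1(\mathfrak{i}_g)_{\mathfrak{m}_g})_w$ via the five-term exact sequences of the extensions \eqref{eq:itm1} and \eqref{eq:itm}. The projections $\mathrm{Gr}\,\mathfrak{t}_{g,1}\to\mathrm{Gr}\,\mathfrak{t}_g$ and $\mathfrak{m}_{g,1}\to\mathfrak{m}_g$ occurring in \eqref{eq:cd}, together with the isomorphism $\mathfrak{i}_{g,1}\xrightarrow{\cong}\mathfrak{i}_g$ of Proposition \ref{prop:ii}, form a morphism from the extension \eqref{eq:itm1} to the extension \eqref{eq:itm}. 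Since the five-term exact sequence of a Lie algebra extension is natural with respect to such morphisms, the natural map $H_2(\mathfrak{m}_{g,1})_w\to H_2(\mathfrak{m}_g)_w$ is identified, under the above isomorphisms, with the map on coinvariants induced by $\mathfrak{i}_{g,1}\xrightarrow{\cong}\mathfrak{i}_g$ and $\mathfrak{m}_{g,1}\to\mathfrak{m}_g$. It therefore suffices to prove that this map on coinvariants is an isomorphism for every $w\geq 3$.

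The key observation is that $\mathrm{Gr}\,\mathfrak{p}_{g,1}$ acts trivially on $H_1(\mathfrak{i}_{g,1})$. Indeed, both $\mathfrak{i}_{g,1}$ and $\mathrm{Gr}\,\mathfrak{p}_{g,1}$ are ideals of $\mathrm{Gr}\,\mathfrak{t}_{g,1}$, being the kernels of the two surjections $\mathrm{Gr}\,\mathfrak{t}_{g,1}\to\mathfrak{m}_{g,1}$ and $\mathrm{Gr}\,\mathfrak{t}_{g,1}\to\mathrm{Gr}\,\mathfrak{t}_g$ appearing in \eqref{eq:cd}. By Proposition \ref{prop:ii} the restriction of the latter projection to $\mathfrak{i}_{g,1}$ is the isomorphism $\mathfrak{i}_{g,1}\cong\mathfrak{i}_g$, in particular injective, so $\mathfrak{i}_{g,1}\cap\mathrm{Gr}\,\mathfrak{p}_{g,1}=0$. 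For two ideals with trivial intersection one has $[\mathrm{Gr}\,\mathfrak{p}_{g,1},\mathfrak{i}_{g,1}]\subseteq \mathrm{Gr}\,\mathfrak{p}_{g,1}\cap\mathfrak{i}_{g,1}=0$. Since $\mathrm{Gr}\,\mathfrak{p}_{g,1}$ is also the kernel of $\mathfrak{m}_{g,1}\to\mathfrak{m}_g$ and lifts into $\mathrm{Gr}\,\mathfrak{p}_{g,1}\subset\mathrm{Gr}\,\mathfrak{t}_{g,1}$, the adjoint action on $H_1(\mathfrak{i}_{g,1})$ of any element of $\mathrm{Gr}\,\mathfrak{p}_{g,1}$ is computed by precisely such a bracket and hence vanishes. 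Thus the $\mathfrak{m}_{g,1}$-action on $H_1(\mathfrak{i}_{g,1})$ factors through $\mathfrak{m}_{g,1}/\mathrm{Gr}\,\mathfrak{p}_{g,1}\cong\mathfrak{m}_g$.

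Finally, choosing lifts of elements of $\mathfrak{m}_g$ to $\mathfrak{m}_{g,1}$ and then to $\mathrm{Gr}\,\mathfrak{t}_{g,1}$ whose images in $\mathrm{Gr}\,\mathfrak{t}_g$ are compatible lifts, and using that $\mathrm{Gr}\,\mathfrak{t}_{g,1}\to\mathrm{Gr}\,\mathfrak{t}_g$ is a Lie algebra homomorphism, one checks that the isomorphism $H_1(\mathfrak{i}_{g,1})\cong H_1(\mathfrak{i}_g)$ is equivariant for the two $\mathfrak{m}_g$-actions. Taking $\mathfrak{m}_g$-coinvariants of an isomorphism of $\mathfrak{m}_g$-modules yields an isomorphism in each weight, which by the first paragraph is exactly the natural map in question; this establishes the claim for all $w\geq 3$. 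I expect the only genuinely delicate point to be the bookkeeping of the first paragraph: one must verify that the abstract isomorphism on coinvariants produced from $\mathfrak{i}_{g,1}\cong\mathfrak{i}_g$ is genuinely the map induced by $\mathfrak{m}_{g,1}\to\mathfrak{m}_g$, i.e.\ the natural homomorphism of the statement, rather than merely some abstract isomorphism of the two spaces. The Lie-theoretic input, the vanishing $[\mathrm{Gr}\,\mathfrak{p}_{g,1},\mathfrak{i}_{g,1}]=0$, is a one-line diagram chase in \eqref{eq:cd}.
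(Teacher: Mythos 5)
Your argument is correct and follows the same route as the paper's own proof: identify the map with $(H_1(\mathfrak{i}_{g,1})_{\mathfrak{m}_{g,1}})_w\rightarrow (H_1(\mathfrak{i}_{g})_{\mathfrak{m}_{g}})_w$ via Propositions \ref{prop:mi1} and \ref{prop:mi}, then conclude using Proposition \ref{prop:ii}. The only difference is that you spell out the step the paper dismisses as ``easy to see,'' namely that $[\mathrm{Gr}\,\mathfrak{p}_{g,1},\mathfrak{i}_{g,1}]\subseteq \mathrm{Gr}\,\mathfrak{p}_{g,1}\cap\mathfrak{i}_{g,1}=0$ forces the $\mathfrak{m}_{g,1}$-action to factor through $\mathfrak{m}_g$, which is a correct and welcome elaboration.
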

\begin{proof}
By Proposition \ref{prop:mi1} and Proposition \ref{prop:mi},
the natural homomorphism
$
H_2(\mathfrak{m}_{g,1})_w\rightarrow H_2(\mathfrak{m}_{g})_w
$
can be identified with the natural homomorphism
$$
(H_1(\mathfrak{i}_{g,1})_{\mathfrak{m}_{g,1}})_w\rightarrow 
(H_1(\mathfrak{i}_{g})_{\mathfrak{m}_{g}})_w
$$
for any $w\geq 3$. Then it is easy to see that this is an isomorphism
by Proposition \ref{prop:ii}.
\end{proof}

As mentioned in the beginning of this section, 
to prove Theorem \ref{th:main}, it is enough to show that
$
H_2(\mathfrak{m}_g)_w=0 \ (w=4,5,6).
$
By the above Proposition \ref{prop:h2g1}, this is equivalent to
showing that
$$
H_2(\mathfrak{m}_{g,1})_w=0 \ (w=4,5,6).
$$
We consider this case of one boundary component which is best suited 
for our computer computation because of the following reason.
By virtue of the Johnson homomorphisms (see \cite{ja}\cite{johnson}\cite{morita93}), the Lie algebra
$\mathfrak{m}_{g,1}$ can be embedded into  
the Lie algebra $\mathfrak{h}_{g,1}$ consisting of all the symplectic derivations
of the free Lie algebra generated by $H_\Q=H_1(\Sigma_g,\Q)$
as a Lie subalgebra. More precisely, we have an embedding
$$
\tau_{g,1}=\bigoplus_{k=1}^\infty \tau_{g,1}(k):
\mathfrak{m}_{g,1}=\bigoplus_{k=1}^\infty
\mathfrak{m}_{g,1}(k)\rightarrow
\mathfrak{h}_{g,1}=\bigoplus_{k=1}^\infty
\mathfrak{h}_{g,1}(k)
\subset \bigoplus_{k=1}^\infty H_\Q^{\otimes (k+2)}.
$$
Thus each graded piece $\mathfrak{m}_{g,1}(k)$ can be identified with $\mathrm{Im}\,\tau_{g,1}(k)$,
which is a submodule of $H_\Q^{\otimes (k+2)}$, so that
we can make explicit computer computations on this space of tensors.

\section{Proof of $H_2(\mathfrak{m}_g)_4=0$}\label{sec:w4}
In this section, we prove the following.
\begin{prop}
$H_2(\mathfrak{m}_g)_4\cong H_2(\mathfrak{m}_{g,1})_4=0.$
\label{prop:w4}
\end{prop}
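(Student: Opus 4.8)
The plan is to reduce the claim $H_2(\mathfrak{m}_g)_4=0$ to the bordered case $H_2(\mathfrak{m}_{g,1})_4=0$ via Proposition~\ref{prop:h2g1}, and then to compute the latter explicitly as the cohomology of the two-term complex appearing in \eqref{eq:hw} at weight $w=4$. Using the embedding $\tau_{g,1}\colon\mathfrak{m}_{g,1}\hookrightarrow\mathfrak{h}_{g,1}\subset\bigoplus_k H_\Q^{\otimes(k+2)}$, each graded piece $\mathfrak{m}_{g,1}(k)=\Im\tau_{g,1}(k)$ is a concrete $\mathrm{Sp}(2g,\Q)$-submodule of a tensor power of $H_\Q$, so the entire computation lives inside explicit spaces of tensors on which a computer can operate.

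\emph{Key steps, in order.} First I would write down, by definition \eqref{eq:hw}, that
$$
H_2(\mathfrak{m}_{g,1})_4=\frac{\Ker\bigl(\partial\colon(\wedge^2\mathfrak{m}_{g,1})_4\twoheadrightarrow\mathfrak{m}_{g,1}(4)\bigr)}{\Im\bigl(\partial\colon(\wedge^3\mathfrak{m}_{g,1})_4\rightarrow(\wedge^2\mathfrak{m}_{g,1})_4\bigr)}.
$$
The weight-$4$ part of $\wedge^2\mathfrak{m}_{g,1}$ decomposes as $\wedge^2\mathfrak{m}_{g,1}(2)\oplus(\mathfrak{m}_{g,1}(1)\otimes\mathfrak{m}_{g,1}(3))$, and the weight-$4$ part of $\wedge^3\mathfrak{m}_{g,1}$ is $\mathfrak{m}_{g,1}(1)\otimes\mathfrak{m}_{g,1}(1)\otimes\mathfrak{m}_{g,1}(2)$ suitably antisymmetrized, since $\mathfrak{m}_{g,1}(1)$ sits in degree $1$. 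Second, I would identify each $\mathfrak{m}_{g,1}(k)$ for $k=1,2,3,4$ as an explicit submodule of $H_\Q^{\otimes(k+2)}$ via $\tau_{g,1}(k)$; the degrees $1,2,3$ are classically known (e.g.\ $\mathfrak{m}_{g,1}(1)\cong[1^3]$), and $\mathfrak{m}_{g,1}(4)=\Im\tau_{g,1}(4)$ is accessible by the derivation structure of $\mathfrak{h}_{g,1}$. Third, I would realize the two boundary maps $\partial$ as the Lie-bracket operations on these tensor modules (the bracket in $\mathfrak{h}_{g,1}$ being given by the explicit symplectic-contraction formula) and set up the resulting linear maps as matrices. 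Finally, I would compute ranks to confirm that the cycle space equals the boundary space, yielding the vanishing.

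\emph{Main obstacle.} The genuine difficulty is twofold. On the structural side, one must pin down $\mathfrak{m}_{g,1}(4)=\Im\tau_{g,1}(4)$ precisely as a submodule of $H_\Q^{\otimes 6}$, i.e.\ determine exactly which irreducible $\mathrm{Sp}$-summands survive in the image of the Johnson homomorphism in degree $4$; getting the target of $\partial$ wrong would invalidate the rank count. On the computational side, the spaces $H_\Q^{\otimes 6}$ are large, so to make the linear algebra feasible I expect to work $\mathrm{Sp}(2g,\Q)$-equivariantly, decomposing each module into isotypic components and checking the relevant exactness one irreducible type at a time—so that the rank computations reduce to manageable sizes and the answer is uniform in $g$ throughout the stable range. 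Verifying that the computed boundary map is surjective onto every cycle in each isotypic piece is where the real work lies; once the matrices are assembled, the vanishing is a finite rank verification, which I would carry out by computer and then invoke Proposition~\ref{prop:h2g1} to transfer the conclusion back to $H_2(\mathfrak{m}_g)_4$.
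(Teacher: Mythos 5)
Your proposal follows essentially the same route as the paper: reduce to $H_2(\mathfrak{m}_{g,1})_4$ via Proposition~\ref{prop:h2g1}, identify the weight-$4$ chain groups as $\wedge^2\mathfrak{m}_{g,1}(1)\otimes\mathfrak{m}_{g,1}(2)\to(\mathfrak{m}_{g,1}(1)\otimes\mathfrak{m}_{g,1}(3))\oplus\wedge^2\mathfrak{m}_{g,1}(2)\to\mathfrak{m}_{g,1}(4)$, embed everything into tensor powers of $H_\Q$ via the Johnson homomorphisms, and verify $\mathrm{Sp}$-isotypic component by component that every $2$-cycle is a boundary by computer rank computations. The two obstacles you single out (determining $\mathrm{Im}\,\tau_{g,1}(4)$ and organizing the linear algebra equivariantly) are exactly what the paper resolves by citing its prior computations of $\mathfrak{m}_{g,1}(k)$ and by constructing explicit detectors from contractions and projections.
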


It is sufficient to show that $H_2(\mathfrak{m}_{g,1})_4=0$, by Proposition \ref{prop:h2g1}. 
By equality \eqref{eq:hw}, 
we have
$$
H_2(\mathfrak{m}_{g,1})_4=
\frac{\mathrm{Ker}\left((\mathfrak{m}_{g,1}(1)\otimes \mathfrak{m}_{g,1}(3))\oplus \wedge^2 \mathfrak{m}_{g,1}(2)
\twoheadrightarrow \mathfrak{m}_{g,1}(4)\right)}{\mathrm{Im}\left(\wedge^2\mathfrak{m}_{g,1}(1)\otimes \mathfrak{m}_{g,1}(2)\rightarrow 
(\mathfrak{m}_{g,1}(1)\otimes \mathfrak{m}_{g,1}(3))\oplus \wedge^2 \mathfrak{m}_{g,1}(2)\right)}.
$$
Here the boundary operator
\begin{equation*}
\partial : \wedge^2\mathfrak{m}_{g,1}(1)\otimes \mathfrak{m}_{g,1}(2)\rightarrow 
(\mathfrak{m}_{g,1}(1)\otimes \mathfrak{m}_{g,1}(3))\oplus \wedge^2 \mathfrak{m}_{g,1}(2)
\label{eq:b}
\end{equation*}
is given by
\begin{align*}
&\wedge^2\mathfrak{m}_{g,1}(1)\otimes \mathfrak{m}_{g,1}(2)\ni (u\land v)\otimes w\longmapsto\\
&(u\otimes [v,w]-v\otimes [u,w],-[u,v]\land w)\in (\mathfrak{m}_{g,1}(1)\otimes \mathfrak{m}_{g,1}(3))\oplus \wedge^2 \mathfrak{m}_{g,1}(2).
\end{align*}

In our paper \cite{mss4}, we gave the irreducible decompositions of 
$\mathfrak{m}_{g,1}(k)\cong \mathrm{Im}\,\tau_{g,1}(k)$
for all $k\leq 6$ as shown in Table \ref{tab:6}.

\begin{table}[h]
\caption{$\text{List of $
\mathfrak{m}_{g,1}(k)
$}$}
\begin{center}
\begin{tabular}{|l|l|}
\noalign{\hrule height0.8pt}
\hfil $k$ & $\mathfrak{m}_{g,1}(k)$  \\
\hline
$1$ &  $[1^3]\ [1]$ \\
\hline
$2$ & $[2^2]\ [1^2]\ [0]$ \\
\hline
$3$ &  $[31^2]\ [21]$ \\
\hline
$4$ & $[42][31^3][2^3]\ 2[31][21^2]\ 2[2]$ \\
\hline
$5$ & $[51^2][421][3^21][321^2][2^21^3]\ 2[41]2[32]2[31^2]2[2^21]2[21^3]\ [3]3[21]2[1^3]\ [1]$ \\
\hline
$6$ & $[62] [521][51^3][4^2][431]2[42^2][421^2][41^4]2[3^21^2][32^21][321^3][2^4]][2^21^4]$\\
{} & $3[51]3[42]4[41^2]3[3^2]7[321]3[31^3][2^3]5[2^21^2]2[21^4][1^6]$ \\
{} & $4[4]6[31]9[2^2]6[21^2]4[1^4]\ 3[2]6[1^2]\ 2[0]$ \\
\noalign{\hrule height0.8pt}
\end{tabular}
\end{center}
\label{tab:6}
\end{table}

By using this and applying our techniques described in \cite{mss2},
we determine the space 
$$
Z_2(4)=\mathrm{Ker}\left((\mathfrak{m}_{g,1}(1)\otimes \mathfrak{m}_{g,1}(3))\oplus 
\wedge^2 \mathfrak{m}_{g,1}(2)
\twoheadrightarrow \mathfrak{m}_{g,1}(4)\right)
$$
of $2$-cycles for the weight $4$ homology group
$H_2(\mathfrak{m}_{g,1})_4$ as shown in Table \ref{tab:z24}.
\begin{table}[h]
\caption{$\text{$\mathrm{Sp}$-irreducible decomposition of $Z_2(4)$}$}
\begin{center}
\begin{tabular}{|c|l|}
\noalign{\hrule height0.8pt}
$Z_2(4)$ & $[431][42^2][421^2][41^4]2[32^21][321^3][31^5]$ \\
{} & $[42]2[41^2][3^2]6[321]4[31^3][2^3]3[2^21^2]2[21^4]$\\
{} & $[4]5[31]5[2^2]7[21^2][1^4]\ 3[2]4[1^2]$\\
\hline
$\mathfrak{m}_{g,1}(1)\otimes \mathfrak{m}_{g,1}(3)$ &  
$[42^2][421^2][41^4][32^21][321^3][31^5]$ \\
{} & $[42]2[41^2]4[321]4[31^3][2^3]2[2^21^2]2[21^4]$ \\
{} & $[4]5[31]3[2^2]5[21^2][1^4]\ 3[2]2[1^2]$\\
\hline
$\wedge^2 \mathfrak{m}_{g,1}(2)$ & 
$[431][32^21]\ [42][3^2]2[321][31^3][2^3][2^21^2]\ 2[31]2[2^2]3[21^2]\ 2[2]2[1^2]$ \\
\hline
$\mathfrak{m}_{g,1}(4)$ &  $[42][31^3][2^3]\ 2[31][21^2]\ 2[2]$  \\
\noalign{\hrule height0.8pt}
\end{tabular}
\end{center}
\label{tab:z24}
\end{table}
Thus we can write 
$$
H_2(\mathfrak{m}_{g,1})_4\cong 
\mathrm{Coker}\, \left(
\wedge^2\mathfrak{m}_{g,1}(1)\otimes \mathfrak{m}_{g,1}(2)
\overset{\partial}{\rightarrow}
Z_2(4) \right).
$$

Now we show that the cokernel is trivial for each irreducible component. 
As an example, we pick up $[21^2]$. 
The multiplicity of $[21^2]$ in
$Z_2(4)$ is $7$, which is the biggest
(see Table \ref{tab:z24}). We have checked that all of these 
$[21^2]$ components are hit by the boundary operator 
$$
\partial : \wedge^2\mathfrak{m}_{g,1}(1)\otimes \mathfrak{m}_{g,1}(2)\rightarrow 
Z_2(4)\subset
(\mathfrak{m}_{g,1}(1)\otimes \mathfrak{m}_{g,1}(3))\oplus \wedge^2 \mathfrak{m}_{g,1}(2)
$$
as follows. 
First, 
$\mathfrak{m}_{g,1}(1)\otimes \mathfrak{m}_{g,1}(3)$ and $\wedge^2 \mathfrak{m}_{g,1}(2)$
are regarded 
as subspaces of $H_\Q^{\otimes 8}$ via the natural embeddings
\begin{align*}
\mathfrak{m}_{g,1}(1)\otimes \mathfrak{m}_{g,1}(3)
&\subset \mathfrak{h}_{g,1}(1)\otimes \mathfrak{h}_{g,1}(3)
\subset H_\Q^{\otimes 3}\otimes H_\Q^{\otimes 5}
\subset H_\Q^{\otimes 8}, \\
\wedge^2 \mathfrak{m}_{g,1}(2)
&\subset \wedge^2 \mathfrak{h}_{g,1}(2)
\subset \wedge^2 H_\Q^{\otimes 4}
\subset H_\Q^{\otimes 8}.
\end{align*}
Then we constructed $7$ $\mathrm{Sp}$-projections
$D_i: H_\Q^{\otimes 8}\rightarrow [21^2]\ (i=1,\ldots,7)$
which detect the $7$ copies of $[21^2]$ in $Z_2(4)$. 
Here we denote by $\mu_{(i_1,j_1)\cdots(i_k,j_k)}$ and $p_{(i_1,\ldots,i_k) \cdots (j_1,\ldots, j_l)}$ 
{\it multiple contractions} and {\it projections} respectively: 
\[
\mu_{(i_1,j_1)\cdots(i_k,j_k)} : H^{\otimes n}_\Q \to H^{\otimes (n-2k)}_\Q, \qquad
p_{(i_{1},\ldots,i_{k}) \cdots (j_1,\ldots, j_l)} : H^{\otimes n}_\Q \to \wedge^k H_\Q \otimes \cdots \otimes \wedge^l H_\Q.
\] 
For example, 
\begin{align*}
\mu_{(13)(25)} (x_1 \otimes x_2 \otimes x_3 \otimes x_4 \otimes x_5 \otimes x_6) 
&= \mu(x_1, x_3) \mu(x_2, x_5) x_4 \otimes x_6, \\
p_{(124)(35)(6)} (x_1 \otimes x_2 \otimes x_3 \otimes x_4 \otimes x_5 \otimes x_6) 
&= (x_1 \wedge x_2 \wedge x_4) \otimes (x_3 \wedge x_5) \otimes x_6 ,
\end{align*}
where $\mu : H_\Q \otimes H_\Q \to \Q$ is a natural non-degenerate antisymmetric bilinear form.
It is known that any subrepresentation of $H^{\otimes n}_\Q$ can be detected by a combination of 
these multiple contractions and projections.  
Such $\mathrm{Sp}$-projections are called {\it detectors}.
The following are detectors for this case $[21^2]$:
\begin{align*}
&p_{(123)(4)} \circ \mu_{(12)(34)} \circ \Phi_1, \quad 
p_{(123)(4)} \circ \mu_{(12)(45)} \circ \Phi_1, \quad
p_{(123)(4)} \circ \mu_{(14)(25)} \circ \Phi_1, \\
&p_{(123)(4)} \circ \mu_{(14)(56)} \circ \Phi_1, \quad
p_{(124)(3)} \circ \mu_{(14)(56)} \circ \Phi_1, \\
& p_{(123)(4)} \circ \mu_{(12)(35)} \circ \Phi_2, \quad
p_{(123)(4)} \circ \mu_{(12)(56)} \circ \Phi_2 , 
\end{align*}
by using projections 
\begin{align*}
\Phi_1 &: \mathfrak{m}_{g,1}(1)\otimes \mathfrak{m}_{g,1}(3) \oplus \wedge^2 \mathfrak{m}_{g,1}(2) 
\xrightarrow{pr_1} \mathfrak{m}_{g,1}(1)\otimes \mathfrak{m}_{g,1}(3) \subset H_\Q^{\otimes 8}, \\
\Phi_2 & : \mathfrak{m}_{g,1}(1)\otimes \mathfrak{m}_{g,1}(3) \oplus \wedge^2 \mathfrak{m}_{g,1}(2)
\xrightarrow{pr_2} \wedge^2 \mathfrak{m}_{g,1}(2) \subset H_\Q^{\otimes 8}, 
\end{align*}
where $pr_i : A_1 \oplus A_2 \to A_i$ is the projection to the $i$th component. 
Next, we take the following $7$ elements $v_j$ of 
$\wedge^2\mathfrak{m}_{g,1}(1)\otimes \mathfrak{m}_{g,1}(2)$. 
In general, $\mathfrak{m}_{g,1}(k)$ is contained in $ \mathfrak{h}_{g,1}(k)~ (\subset H^{\otimes (k+2)}_\Q)$. 
Furthermore, any element of $ \mathfrak{h}_{g,1}(k)$ can be expressed by using a Lie spider with $(k+2)$ legs 
\begin{align*}
&S(u_1, u_2, u_3, \ldots, u_{k+2}) =\\
&u_1 \otimes [u_2, [u_3,[\cdots [ u_{k+1},u_{k+2}]\cdots]]] 
+ u_2 \otimes [[u_3, [u_4,[\cdots [ u_{k+1},u_{k+2}]\cdots]]],u_1] \\
&+u_3 \otimes [[u_4, [u_5,[\cdots [ u_{k+1},u_{k+2}]\cdots]]],[u_1,u_2]] 
+ \cdots + u_{k+2} \otimes [[[\cdots [ u_1,u_2],\cdots],u_k],u_{k+1}] ,
\end{align*}
where $u_i \in H_\Q$. We fix  a symplectic basis $\{a_1, b_1, \ldots, a_g,b_g \}$ of $H_\Q$ 
with respect to $\mu$ so that  
\[
\mu(a_i,a_j) = \mu(b_i,b_j) = 0, \quad 
\mu(a_i, b_j) = \delta_{i,j} .
\]
Then we write $v_j \in \wedge^2\mathfrak{m}_{g,1}(1)\otimes \mathfrak{m}_{g,1}(2)$ as Lie spiders 
\begin{align*}
&v_1 = S(a_1,a_2,a_4) \wedge S(a_1,b_4,a_5) \otimes S(a_3,b_5,a_6,b_6), \\
&v_2 = S(a_1,a_2,a_4) \wedge S(a_1,a_5,b_5) \otimes S(a_3,b_4,a_6,b_6), \\ 
&v_3 = S(a_1,a_2,a_4) \wedge S(a_1,a_5,a_6) \otimes S(a_3,b_4,b_5,b_6), \\
&v_4 = S(a_1,a_2,a_4) \wedge S(a_3,b_4,a_5) \otimes S(a_1,b_5,a_6,b_6), \\
&v_5 = S(a_1,a_2,a_4) \wedge S(b_4,a_5,b_5) \otimes S(a_1,a_3,a_6,b_6), \\
&v_6 = S(a_1,a_2,a_4) \wedge S(b_4,a_5,a_6) \otimes S(a_1,a_3,b_5,b_6), \\
&v_7 = S(a_1,a_2,a_4) \wedge S(a_5,b_5,a_6) \otimes S(a_1,a_3,b_4,b_6).
\end{align*}
Finally, we computed the boundary operator $\partial$ on these vectors $v_j$ and 
applied the $7$ detectors $D_i$. 
Then we checked that the rank of the matrix $D_i(\partial (v_j))$ is $7$. 
This means that $\partial (v_j)$'s generate the space of the highest weight vectors 
corresponding to $7[21^2]$ in $Z_2(4)$. 
This is the process of proving that the $[21^2]$ component of $H_2(\mathfrak{m}_{g,1})_4$ vanishes.

In this way, we checked that all the $\mathrm{Sp}$-irreducible components of
$Z_2(4)$ are boundaries. This finishes the proof of 
$H_2(\mathfrak{m}_{g,1})_4=0$ and hence $H_2(\mathfrak{m}_{g})_4=0$.

\begin{remark}
All the other data are shown in the web \cite{szkHP}. 
\end{remark}

\section{Proof of $H_2(\mathfrak{m}_g)_5=0$}\label{sec:w5}

In this section, we prove the following.
\begin{prop}
$H_2(\mathfrak{m}_g)_5\cong H_2(\mathfrak{m}_{g,1})_5=0.$
\label{prop:w5}
\end{prop}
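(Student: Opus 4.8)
The plan is to reduce $H_2(\mathfrak{m}_g)_5$ to $H_2(\mathfrak{m}_{g,1})_5$ via Proposition \ref{prop:h2g1}, and then carry out the same computer-assisted strategy that worked in weight $4$. First I would write down the weight $5$ part of the chain complex computing $H_2(\mathfrak{m}_{g,1})_5$ using equation \eqref{eq:hw}. The relevant two-cycles $Z_2(5)$ sit inside
$$
(\mathfrak{m}_{g,1}(1)\otimes\mathfrak{m}_{g,1}(4))\oplus(\mathfrak{m}_{g,1}(2)\otimes\mathfrak{m}_{g,1}(3)),
$$
and the boundary from weight $5$ three-chains has three source summands, namely $\wedge^2\mathfrak{m}_{g,1}(1)\otimes\mathfrak{m}_{g,1}(3)$, $\mathfrak{m}_{g,1}(1)\otimes\wedge^2\mathfrak{m}_{g,1}(2)$ (coming from the $\wedge^3$ term split by weight as $1+2+2$ and $1+1+3$). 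Using the irreducible decompositions of the $\mathfrak{m}_{g,1}(k)$ for $k\le 6$ recorded in Table \ref{tab:6}, I would compute the $\mathrm{Sp}$-irreducible decomposition of $Z_2(5)$, exactly as Table \ref{tab:z24} does for weight $4$.

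Next, for each irreducible type $[\lambda]$ appearing in $Z_2(5)$ with multiplicity $m_\lambda$, the goal is to show that all $m_\lambda$ copies lie in the image of the boundary operator $\partial$. Following the weight $4$ model, I would realize the cycle space and the three-chain space concretely as subspaces of $H_\Q^{\otimes 9}$ via the Johnson embedding $\tau_{g,1}$ and the Lie spider description of elements of $\mathfrak{h}_{g,1}(k)$. For each $[\lambda]$ I would construct $m_\lambda$ detectors $D_i$ built from multiple contractions $\mu$ and projections $p$, and choose $m_\lambda$ explicit three-chain elements $v_j$ written as wedges and tensors of Lie spiders $S(\cdots)$ over the symplectic basis $\{a_i,b_i\}$. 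Computing $\partial(v_j)$, applying the detectors, and checking that the matrix $D_i(\partial(v_j))$ has full rank $m_\lambda$ certifies that the $[\lambda]$-component of the cokernel vanishes. Doing this for every irreducible component of $Z_2(5)$ yields $H_2(\mathfrak{m}_{g,1})_5=0$, whence $H_2(\mathfrak{m}_g)_5=0$ by Proposition \ref{prop:h2g1}.

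The main obstacle I expect is scale rather than conceptual novelty. Weight $5$ increases the tensor rank to $9$ and, more importantly, the multiplicities in $Z_2(5)$ will be substantially larger than the maximum of $7$ seen for $[21^2]$ in weight $4$; a component like $[21^3]$ or $[321]$ could appear with high multiplicity, demanding the construction of correspondingly many independent detectors and, harder, correspondingly many three-chains $v_j$ whose boundaries are verifiably independent after detection. Finding detectors that separate the copies is partly an art: one must ensure the chosen contractions do not collapse distinct highest-weight vectors, and one must guard against the detectors or the $v_j$ accidentally spanning a proper subspace, which would leave a spurious apparent nonzero cokernel even when the true one vanishes. The full-rank verification is a finite linear-algebra computation over $\Q$, but assembling the correct inputs and organizing the bookkeeping across all irreducible types is where the real work lies; as in the weight $4$ case, the bulk of the raw data would be relegated to the supplementary web reference \cite{szkHP} rather than printed in full.
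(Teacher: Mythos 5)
Your proposal follows essentially the same route as the paper: reduce to $H_2(\mathfrak{m}_{g,1})_5$ via Proposition \ref{prop:h2g1}, identify $Z_2(5)$ inside $(\mathfrak{m}_{g,1}(1)\otimes\mathfrak{m}_{g,1}(4))\oplus(\mathfrak{m}_{g,1}(2)\otimes\mathfrak{m}_{g,1}(3))$ with boundaries coming from $\wedge^2\mathfrak{m}_{g,1}(1)\otimes\mathfrak{m}_{g,1}(3)$ and $\mathfrak{m}_{g,1}(1)\otimes\wedge^2\mathfrak{m}_{g,1}(2)$, decompose into $\mathrm{Sp}$-irreducibles using Table \ref{tab:6}, and certify surjectivity of $\partial$ onto each isotypic piece by the detector/Lie-spider full-rank computation, which is exactly what the paper does (checking $35$ types of Young diagrams). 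The only quibble is that you announce ``three source summands'' for the three-chains but correctly list the two that actually occur.
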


By equality \eqref{eq:hw}, 
we have
$$
H_2(\mathfrak{m}_{g,1})_5=\frac{Z_2(5)}{B_2(5)}
$$
where
$$
Z_2(5)=
\mathrm{Ker}\left((\mathfrak{m}_{g,1}(1)\otimes \mathfrak{m}_{g,1}(4))\oplus 
(\mathfrak{m}_{g,1}(2)\otimes \mathfrak{m}_{g,1}(3))
\twoheadrightarrow \mathfrak{m}_{g,1}(5)\right)
$$
and
\begin{align*}
B_2(5)=
\mathrm{Im}
&(\wedge^2\mathfrak{m}_{g,1}(1)\otimes \mathfrak{m}_{g,1}(3))
\oplus (\mathfrak{m}_{g,1}(1)\otimes  \wedge^2 \mathfrak{m}_{g,1}(2)) \\
&
\overset{\partial}{\rightarrow}
(\mathfrak{m}_{g,1}(1)\otimes \mathfrak{m}_{g,1}(4))\oplus 
(\mathfrak{m}_{g,1}(2)\otimes \mathfrak{m}_{g,1}(3))
).
\end{align*}
Here the boundary operator
\begin{equation}
\begin{split}
\partial :(\wedge^2\mathfrak{m}_{g,1}(1)\otimes \mathfrak{m}_{g,1}(3))
& \oplus (\mathfrak{m}_{g,1}(1)\otimes  \wedge^2 \mathfrak{m}_{g,1}(2)) \\
&\rightarrow 
(\mathfrak{m}_{g,1}(1)\otimes \mathfrak{m}_{g,1}(4))\oplus 
(\mathfrak{m}_{g,1}(2)\otimes \mathfrak{m}_{g,1}(3))
\label{eq:b5}
\end{split}
\end{equation}
is given by
\begin{align*}
&\wedge^2\mathfrak{m}_{g,1}(1)\otimes \mathfrak{m}_{g,1}(3)\ni (u\land v)\otimes w\longmapsto\\
&(u\otimes [v,w]-v\otimes [u,w],-[u,v]\otimes w)\in (\mathfrak{m}_{g,1}(1)\otimes \mathfrak{m}_{g,1}(4))\oplus 
(\mathfrak{m}_{g,1}(2)\otimes \mathfrak{m}_{g,1}(3))\\
&\mathfrak{m}_{g,1}(1)\otimes  \wedge^2 \mathfrak{m}_{g,1}(2)\ni u\otimes (v\land w)\longmapsto\\
&(u\otimes [v,w],-v\otimes [u,w]+w\otimes [u,v])\in (\mathfrak{m}_{g,1}(1)\otimes \mathfrak{m}_{g,1}(4))\oplus 
(\mathfrak{m}_{g,1}(2)\otimes \mathfrak{m}_{g,1}(3)).
\end{align*}
We can write 
\[
H_2(\mathfrak{m}_{g,1})_5\cong 
\mathrm{Coker}\, \left(
(\wedge^2\mathfrak{m}_{g,1}(1)\otimes \mathfrak{m}_{g,1}(3))
\oplus (\mathfrak{m}_{g,1}(1)\otimes  \wedge^2 \mathfrak{m}_{g,1}(2))
\overset{\partial}{\rightarrow}
Z_2(5) \right).
\]
As in the preceding section, 
by using Table \ref{tab:6} and applying our techniques described in \cite{mss2},
we have determined the space $Z_2(5)$ of $2$-cycles for the weight $5$ homology group
$H_2(\mathfrak{m}_{g,1})_5$, see Table \ref{tab:z25}.
\begin{table}[h]
\caption{$\text{$\mathrm{Sp}$-irreducible decomposition of $Z_2(5)$}$}
\begin{center}
\begin{tabular}{|c|l|}
\noalign{\hrule height0.8pt}
\hfil $Z_2(5)$ & $2[531]2[521^2][432]2[431^2]2[42^21]3[421^3][41^5]2[3^3]2[3^221][32^3]$    \\
{} & $3[32^21^2][321^4][31^6][2^31^3]\ 3[52][51^2]4[43]10[421]8[41^3]4[3^21]8[32^2]$\\
{} & $12[321^2]8[31^4]6[2^31]3[2^21^3]2[21^5]\ 9[41]12[32]23[31^2]12[2^21]13[21^3]$\\
{} & $11[3]17[21]6[1^3]\ 4[1]$\\
\hline
$\mathfrak{m}_{g,1}(1)\otimes \mathfrak{m}_{g,1}(4)$ &  
$[531][521^2][431^2][42^21]2[421^3][41^5][3^3][3^221][32^3]2[32^21^2][321^4][31^6][2^31^3]$    \\
{} & $2[52][51^2]2[43]6[421]5[41^3]2[3^21]4[32^2]8[321^2]6[31^4]4[2^31]3[2^21^3]2[21^5]$\\
{} & $7[41]8[32]15[31^2]8[2^21]10[21^3]\ 8[3]12[21]5[1^3]\ 3[1]$\\
\hline
$\mathfrak{m}_{g,1}(2)\otimes \mathfrak{m}_{g,1}(3)$ & 
$[531][521^2][432][431^2][42^21][421^3][3^3][3^221][32^21^2]$\\
{} & $[52][51^2]2[43]6[421]3[41^3]3[3^21]4[32^2]5[321^2]2[31^4]2[2^31][2^21^3]$\\
{} & $4[41]6[32]10[31^2]6[2^21]5[21^3]\ 4[3]8[21]3[1^3]\ 2[1]$\\
\hline
$\mathfrak{m}_{g,1}(5)$ &  $[51^2][421][3^21][321^2][2^21^3]\ 2[41]2[32]2[31^2]2[2^21]2[21^3]\ [3]3[21]2[1^3]\ [1]$  \\
\noalign{\hrule height0.8pt}
\end{tabular}
\end{center}
\label{tab:z25}
\end{table}
Moreover, we have computed the boundary operator \eqref{eq:b5} explicitly and
checked that all the $2$-cycles ($35$-types of Young diagrams with multiplicities) 
are boundaries.
This finishes the proof of 
$H_2(\mathfrak{m}_{g,1})_5=0$ and hence $H_2(\mathfrak{m}_{g})_5=0$.

\section{Proof of $H_2(\mathfrak{m}_g)_6=0$}\label{sec:w6}

In this section, we prove the following.
\begin{prop}
$H_2(\mathfrak{m}_g)_6\cong H_2(\mathfrak{m}_{g,1})_6=0.$
\label{prop:w6}
\end{prop}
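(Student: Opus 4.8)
As in the previous two sections, by Proposition \ref{prop:h2g1} it is enough to show that $H_2(\mathfrak{m}_{g,1})_6=0$, and the plan is to repeat the computational strategy of weights $4$ and $5$ in weight $6$. By equality \eqref{eq:hw}, we have
$$
H_2(\mathfrak{m}_{g,1})_6=\frac{Z_2(6)}{B_2(6)},
$$
where $Z_2(6)$ is the kernel of the bracket map
$$
\partial:(\mathfrak{m}_{g,1}(1)\otimes \mathfrak{m}_{g,1}(5))\oplus (\mathfrak{m}_{g,1}(2)\otimes \mathfrak{m}_{g,1}(4))\oplus \wedge^2 \mathfrak{m}_{g,1}(3)\twoheadrightarrow \mathfrak{m}_{g,1}(6)
$$
and $B_2(6)$ is the image of the boundary operator
$$
\partial:(\wedge^2\mathfrak{m}_{g,1}(1)\otimes \mathfrak{m}_{g,1}(4))\oplus (\mathfrak{m}_{g,1}(1)\otimes \mathfrak{m}_{g,1}(2)\otimes \mathfrak{m}_{g,1}(3))\oplus \wedge^3\mathfrak{m}_{g,1}(2)\longrightarrow (\wedge^2\mathfrak{m}_{g,1})_6 .
$$
Thus the goal is to show that every $2$-cycle is a boundary, i.e. $Z_2(6)=B_2(6)$, so that $H_2(\mathfrak{m}_{g,1})_6\cong\mathrm{Coker}\,\partial$ vanishes.

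First I would determine the $\mathrm{Sp}$-irreducible decomposition of $Z_2(6)$. The three summands of $(\wedge^2\mathfrak{m}_{g,1})_6$ and the target $\mathfrak{m}_{g,1}(6)$ are read off from Table \ref{tab:6}, all realized inside $H_\Q^{\otimes 10}$ through the embeddings $\mathfrak{m}_{g,1}(k)\subset \mathfrak{h}_{g,1}(k)\subset H_\Q^{\otimes (k+2)}$. Applying the techniques of \cite{mss2} to the bracket map $\partial$ and computing its kernel produces an explicit list, with multiplicities, of the Young diagrams occurring in $Z_2(6)$, in the same format as Tables \ref{tab:z24} and \ref{tab:z25}.

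Next, for each $\mathrm{Sp}$-irreducible type $[\lambda]$ occurring in $Z_2(6)$ with multiplicity $m_\lambda$, I would verify that all $m_\lambda$ copies lie in the image of $\partial$, following the weight $4$ template verbatim. Concretely, I would construct $m_\lambda$ detectors $D_i:H_\Q^{\otimes 10}\rightarrow [\lambda]$ from multiple contractions $\mu_{(\cdots)}$ and projections $p_{(\cdots)}$ (each preceded, as in Section \ref{sec:w4}, by a projection onto the relevant summand of $(\wedge^2\mathfrak{m}_{g,1})_6$) that separate the $m_\lambda$ copies of $[\lambda]$ in $Z_2(6)$; choose $m_\lambda$ test elements $v_j\in(\wedge^3\mathfrak{m}_{g,1})_6$, each written as a wedge/tensor of Lie spiders $S(u_1,\ldots,u_{k+2})$ in the fixed symplectic basis $\{a_1,b_1,\ldots,a_g,b_g\}$; compute the images $\partial(v_j)$; and check that the matrix $\bigl(D_i(\partial(v_j))\bigr)_{i,j}$ has rank $m_\lambda$. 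A full-rank matrix certifies that the highest weight vectors spanning $m_\lambda[\lambda]$ in $Z_2(6)$ are boundaries, hence that the $[\lambda]$-component of $\mathrm{Coker}\,\partial$ is trivial. Performing this for every $[\lambda]$ yields $H_2(\mathfrak{m}_{g,1})_6=0$, and therefore $H_2(\mathfrak{m}_g)_6=0$ by Proposition \ref{prop:h2g1}.

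The hard part will be the scale of the weight $6$ computation rather than any new conceptual ingredient. The ambient space is now $H_\Q^{\otimes 10}$, the module $\mathfrak{m}_{g,1}(6)$ is by far the largest entry of Table \ref{tab:6}, and the domain $(\wedge^3\mathfrak{m}_{g,1})_6$ splits into the three heterogeneous pieces above, so both the number of distinct Young diagrams in $Z_2(6)$ and their multiplicities will be substantially larger than in weights $4$ and $5$. Components of high multiplicity, in the spirit of the $7[21^2]$ case treated in Section \ref{sec:w4}, will demand correspondingly many independent detectors together with test spiders that guarantee full rank, and one must keep $g$ in the stable range so that Table \ref{tab:6} remains valid and no irreducible degenerates. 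The genuine bottleneck is thus the bookkeeping of detectors, the systematic choice of test elements, and the exact-arithmetic rank computations for each of the many irreducible types; the complete data for this case are recorded in \cite{szkHP}.
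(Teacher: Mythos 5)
Your proposal matches the paper's proof in both structure and detail: reduction to the one-boundary case via Proposition \ref{prop:h2g1}, the identification of $Z_2(6)$ and $B_2(6)$ with the same three-summand decompositions of $(\wedge^2\mathfrak{m}_{g,1})_6$ and $(\wedge^3\mathfrak{m}_{g,1})_6$, and the detector/Lie-spider rank computation carried over from the weight $4$ template, with the data deferred to \cite{szkHP}. The paper's Section \ref{sec:w6} does exactly this (reporting $67$ types of Young diagrams with multiplicities, all checked to be boundaries), so there is nothing to add.
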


By equality \eqref{eq:hw}, 
we have
$$
H_2(\mathfrak{m}_{g,1})_6=\frac{Z_2(6)}{B_2(6)}
$$
where
$$
Z_2(6)=\mathrm{Ker}\left(
(\mathfrak{m}_{g,1}(1)\otimes \mathfrak{m}_{g,1}(5))\oplus 
(\mathfrak{m}_{g,1}(2)\otimes \mathfrak{m}_{g,1}(4))\oplus
\wedge^2\mathfrak{m}_{g,1}(3)
\rightarrow \mathfrak{m}_{g,1}(6)\right)
$$
and
\begin{align*}
B_2(6)=
\mathrm{Im}
&((\wedge^2\mathfrak{m}_{g,1}(1)\otimes \mathfrak{m}_{g,1}(4))
\oplus (\mathfrak{m}_{g,1}(1)\otimes  \mathfrak{m}_{g,1}(2)\otimes\mathfrak{m}_{g,1}(3))
\oplus
\wedge^3 \mathfrak{m}_{g,1}(2))\\
&
\overset{\partial}{\rightarrow}
(\mathfrak{m}_{g,1}(1)\otimes \mathfrak{m}_{g,1}(5))\oplus 
(\mathfrak{m}_{g,1}(2)\otimes \mathfrak{m}_{g,1}(4))\oplus
\wedge^2\mathfrak{m}_{g,1}(3)
).
\end{align*}

Here the boundary operator
\begin{equation}
\begin{split}
\partial :(\wedge^2\mathfrak{m}_{g,1}(1)&\otimes \mathfrak{m}_{g,1}(4))
\oplus (\mathfrak{m}_{g,1}(1)\otimes  \mathfrak{m}_{g,1}(2)\otimes\mathfrak{m}_{g,1}(3))
\oplus
\wedge^3 \mathfrak{m}_{g,1}(2))\\
&\rightarrow 
(\mathfrak{m}_{g,1}(1)\otimes \mathfrak{m}_{g,1}(5))\oplus 
(\mathfrak{m}_{g,1}(2)\otimes \mathfrak{m}_{g,1}(4))\oplus
\wedge^2\mathfrak{m}_{g,1}(3)
\end{split}
\label{eq:b6}
\end{equation}
is given by
\begin{align*}
&\wedge^2\mathfrak{m}_{g,1}(1)\otimes \mathfrak{m}_{g,1}(4)\ni (u\land v)\otimes w\longmapsto\\
&\hspace{3mm}(u\otimes [v,w]-v\otimes [u,w],-[u,v]\otimes w,0)
\in (\mathfrak{m}_{g,1}(1)\otimes \mathfrak{m}_{g,1}(5))\oplus 
(\mathfrak{m}_{g,1}(2)\otimes \mathfrak{m}_{g,1}(4))\oplus
\wedge^2\mathfrak{m}_{g,1}(3)\\
&(\mathfrak{m}_{g,1}(1)\otimes  \mathfrak{m}_{g,1}(2)\otimes\mathfrak{m}_{g,1}(3))
\ni u\otimes v\otimes w\longmapsto\\
&\hspace{3mm}(u\otimes [v,w],-v\otimes [u,w],- [u,v]\land w)
\in (\mathfrak{m}_{g,1}(1)\otimes \mathfrak{m}_{g,1}(5))\oplus 
(\mathfrak{m}_{g,1}(2)\otimes \mathfrak{m}_{g,1}(4))\oplus
\wedge^2\mathfrak{m}_{g,1}(3)\\
&\wedge^3 \mathfrak{m}_{g,1}(2)\ni u\land v\land w\longmapsto\\
&\hspace{3mm}(0, u\otimes [v,w]+v\otimes [w,u]+w\otimes [u,v],0)
\in (\mathfrak{m}_{g,1}(1)\otimes \mathfrak{m}_{g,1}(5))\oplus 
(\mathfrak{m}_{g,1}(2)\otimes \mathfrak{m}_{g,1}(4))\oplus
\wedge^2\mathfrak{m}_{g,1}(3).
\end{align*}

Thus we can write 
\begin{align*}
&H_2(\mathfrak{m}_{g,1})_6\cong \mathrm{Coker}\, \\
& \left(
(\wedge^2\mathfrak{m}_{g,1}(1)\otimes \mathfrak{m}_{g,1}(4))
\oplus (\mathfrak{m}_{g,1}(1)\otimes  \mathfrak{m}_{g,1}(2)\otimes\mathfrak{m}_{g,1}(3))
\oplus
\wedge^3 \mathfrak{m}_{g,1}(2))
\overset{\partial}{\rightarrow}
Z_2(6)\right).
\end{align*}
As in the preceding two sections, 
by using Table \ref{tab:6} and applying our techniques described in \cite{mss2},
we can determine the space $Z_2(6)$ of $2$-cycles for the weight $6$ homology group
$H_2(\mathfrak{m}_{g,1})_6$ as in Table \ref{tab:z26}.
\begin{table}[h]
\caption{$\text{$\mathrm{Sp}$-irreducible decomposition of $Z_2(6)$}$}
\begin{center}
\begin{tabular}{|c|l|}
\noalign{\hrule height0.8pt}
\hfil 
$Z_2(6)$ 
& $[{64}][{631}] 2[{62^2}] 2[{621^2}][{61^4}][{541}]3[{532}]3[{531^2}]4[{52^21}]4[{521^3}]2[{51^5}]3[{4^22}]$\\
{} &  $5[{431^3}]2[{4^21^2}]7[{4321}]3[{42^3}]4[{42^21^2}]3[{421^4}][{3^31}]2[{3^22^2}]5[{3^221^2}]4[{3^21^4}]$\\
{} & $3[{32^31}]3[{32^21^3}] 2[{321^5}]2[{2^5}][{2^41^2}][{2^31^4}][{2^21^6}]\ 2[{62}]3[{61^2}]6[{53}]17[{521}]$\\
{} & $10[{51^3}]2[{4^2}]20[{431}]16[{42^2}]29[{421^2}]12[{41^4}]14[{3^22}]19[{3^21^2}]25[{32^21}]25[{321^3}]$    \\
{} & $8[{31^5}]5[{2^4}]12[{2^31^2}]9[{2^21^4}]3[{21^6}]\ 2[{6}]13[{51}]34[{42}]35[{41^2}]17[{3^2}]63[{321}]42[{31^3}]$ \\
{} & $25[{2^3}]40[{2^21^2}]25[{21^4}]5[{1^6}]\ 13[{4}]52[{31}]33[{2^2}]56[{21^2}]18[{1^4}]\ 26[{2}]21[{1^2}]\ 3[{0}]$\\
\hline
$\mathfrak{m}_{g,1}(1)\otimes \mathfrak{m}_{g,1}(5)$ &  
$[62^2][621^2][61^4][532][531^2]2[52^21]2[521^3][51^5][4^22][4^21^2]3[4321]3[431^3]$    \\
{} & $[42^3]2[42^21^2]2[421^4][3^22^2]3[3^221^2]3[3^21^4]2[32^31]2[32^21^3]2[321^5][2^5]$\\
{} & $[2^41^2][2^31^4][2^21^6]\ [62]2[61^2][53]8[521]6[51^3][4^2]9[431]8[42^2]15[421^2]$ \\
{} & $7[41^4]6[3^22]12[3^21^2]13[32^21]17[321^3]6[31^5]4[2^4]8[2^31^2]8[2^21^4]3[21^6]$\\
{} & $[6]8[51]16[42]19[41^2]10[3^2]36[321]25[31^3]13[2^3]28[2^21^2]$\\
{} & $19[21^4]5[1^6]\ 8[4]28[31]23[2^2]35[21^2]15[1^4]\ 14[2]16[1^2]\ 3[0]$ \\
\hline
$\mathfrak{m}_{g,1}(2)\otimes \mathfrak{m}_{g,1}(4)$ & 
$[64][631][62^2][541][532]2[531^2][52^21][521^3]2[4^22]3[4321][431^3]2[42^3]$\\
{} & $[42^21^2][421^4][3^31]2[3^221^2][32^21^3][2^21^4][2^5]\ [62][61^2]5[53]7[521]2[51^3][4^2]$\\
{} & $9[431]6[42^2]11[421^2]3[41^4]7[3^22]5[3^21^2][32^31]10[32^21]6[321^3]2[31^5][2^4]$\\
{} & $4[2^31^2]\ [6]5[51]18[42]13[41^2]5[3^2]25[321]16[31^3]12[2^3]10[2^21^2]6[21^4]$\\
{} & $5[4]25[31]11[2^2]22[21^2]3[1^4]\ 14[2]6[1^2]$\\
\hline
$\wedge^2\mathfrak{m}_{g,1}(3)$ &  
$[621^2][532][52^21][521^3][51^5][4^21^2][4321][431^3][42^21^2][3^22^2][3^21^4]$    \\
{} & $[62]3[521]3[51^3][4^2]3[431]4[42^2]4[421^2]3[41^4][3^22]4[3^21^2]3[32^21]3[321^3][2^4][2^21^4]$\\
{} & $3[51]3[42]7[41^2]5[3^2]9[321]4[31^3][2^3]7[2^21^2]2[21^4][1^6]$\\
{} & $4[4]5[31]8[2^2]5[21^2]4[1^4]\ [2]5[1^2]\ 2[0]$\\
\hline
$\mathfrak{m}_{g,1}(6)$ &  
$[62][521][51^3][431][4^2]2[42^2][421^2][41^4]2[3^21^2][32^21][321^3][2^4]$\\
{} & $3[51]3[42]4[41^2]3[3^2]7[321]3[31^3][2^3][2^21^4]5[2^21^2]2[21^4][1^6]$\\
{} & $4[4]6[31]9[2^2]6[21^2]4[1^4]\ 3[2]6[1^2]\ 2[0]$  \\
\noalign{\hrule height0.8pt}
\end{tabular}
\end{center}
\label{tab:z26}
\end{table}

We have computed the boundary operator \eqref{eq:b6} explicitly and
checked that all the $2$-cycles ($67$-types of Young diagrams with multiplicities) 
are boundaries.
In this way, we checked that all the $\mathrm{Sp}$-irreducible components of
$Z_2(6)$ are boundaries. This finishes the proof of 
$H_2(\mathfrak{m}_{g,1})_6=0$ and hence $H_2(\mathfrak{m}_{g})_6=0$.

\begin{remark}
The size of our computer computation grows very rapidly with respect to weights
and, in particular, the weight $6$ case is approximately
$1000$ times as large as the weight $4$ case.
\end{remark}

\section{
Proofs of the main results}\label{sec:ft}

\begin{proof}[Proof of Theorem \ref{th:main}] 
By Corollary \ref{cor:h2}, we have $\mathfrak{i}_g(4)\cong H_2(\mathfrak{m}_g)_4$.
On the other hand, we have $H_2(\mathfrak{m}_g)_4=0$ by Proposition \ref{prop:w4}.
Hence, we conclude that $\mathfrak{i}_g(4)=0$, namely 
$\mathfrak{t}_g(4)\cong \mathfrak{m}_g(4)$.
Then, if we combine Proposition \ref{prop:iw} with Proposition \ref{prop:w5} and Proposition \ref{prop:w6},
we conclude that $\mathfrak{i}_g(5)=\mathfrak{i}_g(6)=0$ so that
$\mathfrak{t}_g(5)\cong \mathfrak{m}_g(5)$ and $\mathfrak{t}_g(6)\cong \mathfrak{m}_g(6)$.
This finishes the proof.
\end{proof}

\begin{proof}[Proof of Corollary \ref{cor:fi}] 
Observe first that, for any $k,\ell$ with $1\leq k\leq\ell$, the quotient group $\mathcal{M}_g(k)/\mathcal{I}_g(\ell)$ is a
finitely generated nilpotent group because it is a subgroup of $\mathcal{I}_g/\mathcal{I}_g(\ell)$ which is 
finitely generated by Johnson \cite{jfg} and nilpotent.
Hence we can consider the rational form $(\mathcal{M}_g(k)/\mathcal{I}_g(\ell))\otimes\Q$
of $\mathcal{M}_g(k)/\mathcal{I}_g(\ell)$.

Now the case $k=3$ is a direct consequence of Theorem \ref{th:hain}
combined with a result in \cite{morita89}
because of the following reason. Since $\mathcal{I}_g(2)$ is a finite index subgroup of
$\mathcal{M}_g(2)=\mathcal{K}_g$ by Johnson, 
we have a short exact sequence
$$
0\rightarrow(\mathcal{M}_g(3)/\mathcal{I}_g(3))\otimes\Q
\rightarrow(\mathcal{I}_g(2)/\mathcal{I}_g(3))\otimes \Q\rightarrow 
(\mathcal{M}_g(2)/\mathcal{M}_g(3))\otimes\Q\rightarrow 0.
$$
Here $(\mathcal{I}_g(2)/\mathcal{I}_g(3))\otimes \Q=\mathfrak{t}_g(2)$
and $(\mathcal{M}_g(2)/\mathcal{M}_g(3))\otimes \Q=\mathfrak{m}_g(2)$ by definition.
Hence we conclude $(\mathcal{M}_g(3)/\mathcal{I}_g(3))\otimes\Q\cong \mathfrak{i}_g(2)\cong \Q$.
The result follows from this because we know that the homomorphism 
$d_1:\mathcal{M}_g(3)\rightarrow \Q$ is non-trivial whereas its restriction to $\mathcal{I}_g(3)$ is trivial
as shown in \cite{morita89}.

Next, we consider the cases $k\geq 4$.
We recall here
how the non-triviality of the homomorphism $d_1:\mathcal{M}_g(k)\rightarrow \Q$
for all $k\geq 4$ follows immediately from
Hain's result that the homomorphism 
$\mathfrak{t}_g(k) \to \mathfrak{m}_g(k)$
is surjective for any $k$.
Assume that $d_1:\mathcal{M}_g(k)\rightarrow \Q$ were trivial for some $k\geq 4$
and let $m$ be the smallest such. Then consider the homomorphism
\begin{equation}
\mathfrak{t}_g(m-1)=(\mathcal{I}_g(m-1)/\mathcal{I}_g(m)) \otimes \Q
\rightarrow 
\mathfrak{m}_g(m-1)=(\mathcal{M}_g(m-1)/\mathcal{M}_g(m)) \otimes \Q.
\label{eq:m}
\end{equation}
By the assumption, the non-trivial homomorphism $d_1:\mathcal{M}_g(m-1)\rightarrow\Q$
factors through $d_1:\mathcal{M}_g(m-1)/\mathcal{M}_g(m)\rightarrow\Q$.
On the other hand, we know that the restriction of $d_1$ on $\mathcal{I}_g(k)$ is trivial
for all $k\geq 3$. We now conclude that the above homomorphism \eqref{eq:m}
is {\it not} surjective which is a contradiction.

Now the case $k=4$ follows from the fact $\mathfrak{t}_g(3)\cong \mathfrak{m}_g(3)$
proved in \cite{morita99} as follows.
We have the following two exact sequences.
$$
0\rightarrow\mathfrak{t}_g(3)=(\mathcal{I}_g(3)/\mathcal{I}_g(4))\otimes \Q\rightarrow 
(\mathcal{M}_g(3)/\mathcal{I}_g(4))\otimes\Q
\rightarrow(\mathcal{M}_g(3)/\mathcal{I}_g(3))\otimes\Q
\rightarrow 0,
$$
$$
0\rightarrow(\mathcal{M}_g(4)/\mathcal{I}_g(4))\otimes\Q
\rightarrow(\mathcal{M}_g(3)/\mathcal{I}_g(4))\otimes \Q\rightarrow 
\mathfrak{m}_g(3)=(\mathcal{M}_g(3)/\mathcal{M}_g(4))\otimes\Q\rightarrow 0.
$$
By the case $k=3$ above, we have $(\mathcal{M}_g(3)/\mathcal{I}_g(3))\otimes\Q\cong\Q$.
If we put this to the first exact sequence, we obtain
$$
\mathrm{rank}\, (\mathcal{M}_g(3)/\mathcal{I}_g(4))\otimes \Q=\dim \mathfrak{t}_g(3)+1.
$$
Here $\mathrm{rank}\, (\mathcal{M}_g(3)/\mathcal{I}_g(4))\otimes\Q$ means the rank of the
nilpotent group $(\mathcal{M}_g(3)/\mathcal{I}_g(4))\otimes\Q$ over $\Q$.
On the other hand, from the second exact sequence, we have
$$
\mathrm{rank}\, (\mathcal{M}_g(3)/\mathcal{I}_g(4))\otimes \Q=\dim \mathfrak{m}_g(3)+
\mathrm{rank}\, (\mathcal{M}_g(4)/\mathcal{I}_g(4))\otimes\Q.
$$
Since $\mathfrak{t}_g(3)\cong \mathfrak{m}_g(3)$, we conclude that
$$
\mathrm{rank}\, (\mathcal{M}_g(4)/\mathcal{I}_g(4))\otimes\Q=1
$$
finishing the proof of the case $k=4$. 

The remaining cases $k=5,6,7$ follow from similar arguments as above 
by using Theorem \ref{th:main}.
\end{proof}

Next we prove 
Theorem \ref{th:kab}, Corollary \ref{cor:ki}, Theorem \ref{th:fti} and its refinements. 
For that, we recall a few facts about the relation between
the Torelli group and homology spheres.
Let $S^1\times D^2$ denote a framed solid torus and let
$H_g=\natural^g (S^1\times D^2)$ (boundary connected sum of $g$-copies of $S^1\times D^2$) denote a handlebody of genus $g$. We identify $\partial H_g$ with $\Sigma_g$ equipped with a system of $g$ meridians and longitudes.
Let $\iota_g\in \mathcal{M}_g$ be the mapping class which exchanges each meridian and longitude curves
so that the manifold $H_g\cup_{\iota_g} -H_g$ obtained by
identifying the boundaries of $H_g$ and $-H_g$ by $\iota_g$ is $S^3$.
Now for each element $\varphi\in\mathcal{I}_g$, we consider the manifold
$M_\varphi=H_g\cup_{\iota_g\varphi} -H_g$ which is a homology $3$-sphere.
It was shown in \cite{morita89} that any homology sphere can be expressed as
$M_\varphi$ for some $\varphi\in\mathcal{K}_g=\mathcal{M}_g(2)$ and 
Pitsch \cite{pitsch} further proved that $\varphi$ can be taken in $\mathcal{M}_g(3)$.

Now we recall the relation between the Casson invariant $\lambda$ and the structure of the Torelli
group as revealed in \cite{morita89} briefly (see \cite{morita91} for further results).
We defined $\lambda^*:\mathcal{K}_g\rightarrow\Z$ by setting
$\lambda^*(\varphi)=\lambda(M_{\varphi})\ (\varphi\in\mathcal{K}_g)$ and then proved
that it is a homomorphism. On the other hand, we have the following two abelian quotients of the group
$\mathcal{K}_g$
\begin{align*}
\tau_g(2)&: \mathcal{K}_g\rightarrow\mathfrak{h}_g(2)\\
d_1&: \mathcal{K}_g\rightarrow\Z
\end{align*}
where the first one is the second Johnson homomorphism and the second one is constructed in the 
above cited paper. Then we have the following.
\begin{thm}[\cite{morita89}]
The homomorphism $\lambda^*:\mathcal{K}_g\rightarrow\Z$ is expressed as
$$
\lambda^*=\frac{1}{24} d_1+\bar{\tau}_g(2)
$$
where $\bar{\tau}_g(2)$ denotes a certain quotient of the second Johnson homomorphism.
Furthermore, the restriction of $\lambda^*$ to the subgroup $\mathcal{M}_g(3)\subset \mathcal{K}_g$
is given by
$$
\lambda^*=\frac{1}{24} d_1: \mathcal{M}_g(3)\rightarrow\Z.
$$
\label{th:core}
\end{thm}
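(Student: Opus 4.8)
The plan is to treat the second identity, the restriction to $\mathcal{M}_g(3)$, as the analytic heart of the statement and to deduce the first identity from it by a formal factoring argument. The starting point is the fact, already recorded above, that $\lambda^*:\mathcal{K}_g\rightarrow\Z$ is a group homomorphism; together with $d_1$ and the second Johnson homomorphism $\tau_g(2):\mathcal{K}_g\rightarrow\mathfrak{h}_g(2)$ this gives us three abelian quotients of $\mathcal{K}_g$ to work with. The key structural input is that $\mathcal{M}_g(3)$ is exactly the kernel of $\tau_g(2)$ inside $\mathcal{K}_g=\mathcal{M}_g(2)$, so that $(\mathcal{K}_g/\mathcal{M}_g(3))\otimes\Q\cong\mathrm{Im}\,\tau_g(2)$ and any homomorphism factoring through $\tau_g(2)$ automatically vanishes on $\mathcal{M}_g(3)$.

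Granting the second identity $\lambda^*|_{\mathcal{M}_g(3)}=\tfrac{1}{24}d_1|_{\mathcal{M}_g(3)}$, the first follows at once: the difference $F:=\lambda^*-\tfrac{1}{24}d_1:\mathcal{K}_g\rightarrow\Q$ is a homomorphism vanishing on $\mathcal{M}_g(3)$, hence factors through $(\mathcal{K}_g/\mathcal{M}_g(3))\otimes\Q\cong\mathrm{Im}\,\tau_g(2)$. Thus $F$ is a linear functional on $\mathrm{Im}\,\tau_g(2)$, which is by definition a quotient $\bar{\tau}_g(2)$ of the second Johnson homomorphism, and $\lambda^*=\tfrac{1}{24}d_1+\bar{\tau}_g(2)$. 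So everything reduces to analysing $\lambda^*$ on $\mathcal{M}_g(3)$.

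To establish the restricted identity I would proceed in two steps, proportionality and then the constant. For proportionality, the construction $\varphi\mapsto M_\varphi$ is natural under the handlebody subgroup $\mathcal{A}_g\subset\mathcal{M}_g$, since for $\psi\in\mathcal{A}_g$ the glued manifolds $M_{\psi\varphi\psi^{-1}}$ and $M_\varphi$ are diffeomorphic; hence $\lambda^*$ is $\mathcal{A}_g$-invariant, and one verifies that its restriction to $\mathcal{M}_g(3)$ defines a class in the $\mathcal{M}_g$-invariant part $H^1(\mathcal{K}_g;\Z)^{\mathcal{M}_g}$. Since the latter is $\cong\Z$ with rational generator $d_1$ by \cite{morita91}, and since $d_1|_{\mathcal{M}_g(3)}$ is non-trivial by \cite{morita89}, it follows that $\lambda^*|_{\mathcal{M}_g(3)}$ is a rational multiple of $d_1|_{\mathcal{M}_g(3)}$. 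For the constant I would evaluate both sides on one explicit element of $\mathcal{M}_g(3)$ whose associated homology sphere $M_\varphi$ admits a tractable Dehn surgery description, and compute $\lambda(M_\varphi)$ through Casson's surgery formula, that is, via the second derivative at $1$ of the relevant Alexander polynomial. Matching this value against the normalisation $d_1=h(g-h)$ on a separating twist of type $(h,g-h)$ then pins the proportionality factor to $\tfrac{1}{24}$.

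The main obstacle is precisely this last computation. The factoring argument and the reduction to $\mathcal{M}_g(3)$ are formal, and the $\mathcal{A}_g$-invariance of $\lambda^*$ is geometrically transparent, but extracting the exact rational constant $\tfrac{1}{24}$ demands a genuine surgery computation of the Casson invariant on a concrete family of homology spheres. The delicate point is organising the surgery presentation of $M_\varphi$ for $\varphi\in\mathcal{M}_g(3)$ so that Casson's formula applies and the outcome can be compared cleanly with the values of $d_1$, together with the verification that $\lambda^*|_{\mathcal{M}_g(3)}$ really does promote to a genuinely $\mathcal{M}_g$-invariant cohomology class.
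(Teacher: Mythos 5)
First, be aware that the paper does not prove Theorem \ref{th:core}: it is recalled verbatim from \cite{morita89}, so there is no in-paper argument to compare against. In the original reference the logic runs in the opposite direction to yours: the identity $\lambda^*=\frac{1}{24}d_1+\bar{\tau}_g(2)$ is established on all of $\mathcal{K}_g$ by direct computation (evaluating $\lambda$ on the homology spheres associated to separating twists and their products via Casson's surgery formula, and identifying the resulting cocycle with the explicit cochain defining $d_1$), and the restriction statement is then the easy corollary, since $\bar{\tau}_g(2)$ factors through $\tau_g(2)$ and $\mathcal{M}_g(3)=\mathrm{Ker}\,\tau_g(2)$. Your reduction of the first identity to the second is formally fine --- a homomorphism on $\mathcal{K}_g$ killing $\mathcal{M}_g(3)$ does factor through $\mathrm{Im}\,\tau_g(2)$ --- but it shifts all the weight onto the restricted identity, and that is where your argument has a genuine gap.

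The gap is the proportionality step. The isomorphism $H^1(\mathcal{K}_g;\Z)^{\mathcal{M}_g}\cong\Z$ of \cite{morita91} concerns invariant classes on $\mathcal{K}_g$, whereas $\lambda^*|_{\mathcal{M}_g(3)}$ is a class on $\mathcal{M}_g(3)$; no computation of $H^1(\mathcal{M}_g(3);\Q)^{\mathcal{M}_g}$ is available (determining $H_1(\mathcal{M}_g(k);\Q)$ for $k\geq 3$ is posed as an open problem in this very paper). To invoke the cited isomorphism you would need $\lambda^*$ itself to be $\mathcal{M}_g$-conjugation invariant on $\mathcal{K}_g$, and it is not: it is invariant only under conjugation by the handlebody subgroup $\mathcal{A}_g$ and its $\iota_g$-conjugate, and the very formula you are proving shows it cannot be $\mathcal{M}_g$-invariant, since for a standardly embedded separating curve $c$ of type $(h,g-h)$ one has $\lambda^*(T_c)=0$ while $d_1(T_c)=h(g-h)\neq 0$, so $\lambda^*$ is not a multiple of $d_1$ on $\mathcal{K}_g$. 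A correct route to proportionality exists but is different: $\lambda$ is of finite type $2$, so $\lambda^*$ vanishes on $\mathcal{I}_g(3)$, and $(\mathcal{M}_g(3)/\mathcal{I}_g(3))\otimes\Q\cong\Q$ is detected exactly by $d_1$ (the case $k=3$ of Corollary \ref{cor:fi}), which forces $\lambda^*|_{\mathcal{M}_g(3)}$ to be a rational multiple of $d_1$ --- though this uses Hain's presentation, which postdates \cite{morita89}. Your normalisation step also needs care: separating twists, on which $d_1$ is normalised by $h(g-h)$, do not lie in $\mathcal{M}_g(3)$, so the test element must be a suitable product or commutator of such twists, and both $\lambda$ and $d_1$ must be evaluated on that element rather than on a single twist.
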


Ohtsuki initiated a theory of finite type invariants for homology $3$-spheres in \cite{ohtsuki} and
in \cite{ohtsukip} he constructed a series of such invariants $\lambda_k\ (k=1,2,\ldots)$
the first one being ($6$ times) the Casson invariant. They are now called the Ohtsuki invariants.
Garoufalidis and Levine \cite{gl} studied the relation between the finite type invariants of
homology spheres and the structure of the Torelli group, particularly its lower central series.
This work extended the case of the Casson invariant mentioned above extensively.

Now, let $v$ be a rational invariant of homology spheres of finite type $k$. Then we define
a mapping 
$$
v^*: \mathcal{I}_g\rightarrow \Q
$$
by setting $v^*(\varphi)=v(M_\varphi)$. By a result of Garoufalidis and Levine \cite{gl}, it vanishes
on $\mathcal{I}_g(k+1)$. 
On the other hand, the following result is known.

\begin{prop}[Levine {\cite[Lemma 5.5]{levine}}]
Let $v$ be an invariant of homology spheres of finite type $k$. 
Then for any $\varphi\in\mathcal{I}_g(k_1), \psi\in\mathcal{I}_g(k_2)$
with $k_1+k_2>k$, the equality
$$
v(M_{\varphi\psi})=v(M_{\varphi})+v(M_{\psi})
$$
holds. 
\end{prop}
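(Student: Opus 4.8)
The plan is to reduce the claimed additivity to the defining vanishing property of a type $k$ invariant, using the handlebody gluing description $M_\varphi=H_g\cup_{\iota_g\varphi}(-H_g)$ recalled above. First I would normalise so that $v(S^3)=0$; this is in fact forced by the asserted equality applied to $\varphi=\psi=1$, since $1\in\mathcal{I}_g(k_1)\cap\mathcal{I}_g(k_2)$ gives $v(S^3)=2v(S^3)$. I then introduce the second difference
$$
\delta(\varphi,\psi):=v(M_{\varphi\psi})-v(M_\varphi)-v(M_\psi),
$$
so that the assertion becomes $\delta(\varphi,\psi)=0$ whenever $k_1+k_2>k$. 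The point is that $\delta$ isolates the genuinely mixed dependence of $v$ on the two factors, and the finite type hypothesis should annihilate it once the combined filtration weight passes $k$.

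Next I would invoke the dictionary of Garoufalidis and Levine \cite{gl} between the lower central series of the Torelli group and the Goussarov--Habiro clasper (equivalently, algebraically split unit framed surgery) filtration. Concretely, an element $\varphi\in\mathcal{I}_g(k_j)$ yields a homology sphere $M_\varphi$ obtained from $S^3$ by a clasper surgery $C_\varphi$ of degree at least $k_j$, supported in a collar of the gluing surface $\Sigma_g$. The essential geometric input is that the modification realising $\varphi$ and the one realising $\psi$ can be isotoped into disjoint portions of this collar, so that composition of the gluing maps corresponds to simultaneous surgery and $M_{\varphi\psi}$ is obtained from $S^3$ by surgery on the disjoint union $C_\varphi\sqcup C_\psi$.

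With $M_{\varphi\psi}$ presented as a single disjoint surgery, I would apply the standard splitting (union) formula for finite type invariants. Expanding $v$ over the two claspers, the quantity $\delta(\varphi,\psi)$ equals a sum of contributions, each represented by a \emph{connected} clasper that uses pieces from both $C_\varphi$ and $C_\psi$, and whose degree is therefore at least $k_1+k_2$. Since $v$ is of type $k$ it vanishes on every surgery of degree $>k$, and $k_1+k_2>k$ by hypothesis; hence every surviving term is zero and $\delta(\varphi,\psi)=0$, which is exactly the claim. (Equivalently, one may phrase the whole argument via polynomial maps on groups: $v^{*}=v\circ M_{(-)}$ is polynomial of degree $\le k$ on $\mathcal{I}_g$ and $\delta(\varphi,\psi)=(\partial_\varphi\partial_\psi v^{*})(1)$ is a weighted second difference, which vanishes once the filtration weights $k_1,k_2$ of $\varphi,\psi$ sum past $k$; the single difference version is precisely the Garoufalidis--Levine vanishing of $v^{*}$ on $\mathcal{I}_g(k+1)$ recalled above.)

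The main obstacle is the second step: upgrading the purely algebraic depth $k_j$ in the lower central series to a genuine clasper of degree $k_j$ that can be supported disjointly for the two factors, with framings and algebraic splitness arranged so the type $k$ alternating sum applies verbatim. Making ``composition of gluings $=$ disjoint simultaneous surgery'' precise, and verifying that no lower degree correction terms leak into $\delta$, is the delicate part; once the Garoufalidis--Levine correspondence and the clasper splitting formula are granted, the rest of the deduction is formal.
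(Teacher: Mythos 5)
The paper does not actually prove this statement: it is imported verbatim from Levine \cite[Lemma 5.5]{levine}, so there is no in-text argument to compare yours against. Your strategy --- form the second difference $\delta(\varphi,\psi)$, realize $\varphi$ and $\psi$ by surgeries supported in disjoint collars of $\Sigma_g$ so that composing gluing maps becomes simultaneous disjoint surgery, and kill the mixed terms of the resulting inclusion--exclusion expansion using the type $k$ hypothesis --- is the standard route and is essentially how this lemma is proved. But as written your text is a road map whose decisive step is left open, and you say so yourself: the passage from ``$\varphi$ is a product of $k_1$-fold commutators in $\mathcal{I}_g$'' to ``$M_{\varphi\psi}$ is obtained from $S^3$ by surgery on a disjoint union $C_\varphi\sqcup C_\psi$, with every connected component of $C_\varphi$ of degree at least $k_1$ and every component of $C_\psi$ of degree at least $k_2$'' is precisely the content of the Garoufalidis--Levine/Habiro correspondence, and without a precise statement of it the argument does not get off the ground.

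Two further points need repair. First, the normalization $v(S^3)=0$ cannot be ``forced by the asserted equality'': that is circular, since the equality is what you are trying to prove. It has to be taken as part of the convention for finite type invariants in \cite{levine} (otherwise the correct conclusion is $v(M_{\varphi\psi})=v(M_\varphi)+v(M_\psi)-v(S^3)$). Second, the surviving contributions to $\delta(\varphi,\psi)$ are not ``connected claspers using pieces from both'': they are the alternating-sum brackets $[T]=\sum_{S\subseteq T}(-1)^{|T\setminus S|}v(M_S)$ taken over (generally disconnected) sub-collections $T$ that meet both $C_\varphi$ and $C_\psi$, hence have total degree at least $k_1+k_2>k$. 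What you must invoke is the vanishing of these multilinear brackets in total degree greater than $k$ --- in the degree convention under which type $k$ invariants vanish on $\mathcal{I}_g(k+1)$, which is the convention this paper uses --- and not the vanishing of $v$ itself ``on every surgery of degree $>k$''. With the realization step supplied and these two points made precise, your outline does close up into a proof.
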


As a direct corollary, we obtain the following. 
\begin{cor}
Let $v$ be a rational invariant of homology spheres of finite type $k$. Then
we have a well-defined map
$$
v^*: \mathcal{I}_{g} /\mathcal{I}_{g}(k+1)\rightarrow \Q
$$
and its restriction to $\mathcal{I}_{g} (m)/\mathcal{I}_{g}(k+1)$
$$
v^*: \mathcal{I}_{g}(m)/\mathcal{I}_{g}(k+1)\rightarrow \Q
$$
is a homomorphism if $2m > k$. 
\label{cor:levine}
\end{cor}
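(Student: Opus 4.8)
The plan is to derive both assertions directly from Levine's Proposition, combined with the vanishing of $v^*$ on $\mathcal{I}_g(k+1)$ due to Garoufalidis and Levine \cite{gl}. No further input is needed, so the entire content lies in verifying that the hypothesis $k_1+k_2>k$ of the Proposition is met in each of the two applications, and then reading off what the resulting additivity relation says.

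First, to obtain the induced map on $\mathcal{I}_g/\mathcal{I}_g(k+1)$, I would fix $\varphi\in\mathcal{I}_g=\mathcal{I}_g(1)$ and $\eta\in\mathcal{I}_g(k+1)$ and apply the Proposition with $k_1=1$ and $k_2=k+1$; their sum $k+2$ exceeds $k$, so the hypothesis holds. This yields $v^*(\varphi\eta)=v^*(\varphi)+v^*(\eta)$, and since $v^*(\eta)=0$ by \cite{gl} we conclude $v^*(\varphi\eta)=v^*(\varphi)$. As $\mathcal{I}_g(k+1)$ is a term of the lower central series, hence normal in $\mathcal{I}_g$, this shows that $v^*$ is constant on cosets of $\mathcal{I}_g(k+1)$ and therefore descends to a well-defined map $v^*:\mathcal{I}_g/\mathcal{I}_g(k+1)\to\Q$, establishing the first part.

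Second, to obtain the homomorphism property under the hypothesis $2m>k$, I would take $\varphi,\psi\in\mathcal{I}_g(m)$ and apply the Proposition with $k_1=k_2=m$; here the condition $k_1+k_2=2m>k$ is exactly the stated assumption. The resulting identity $v^*(\varphi\psi)=v^*(\varphi)+v^*(\psi)$ says precisely that the restriction of $v^*$ to $\mathcal{I}_g(m)$ is additive, i.e.\ a group homomorphism into $(\Q,+)$. Since $v^*$ vanishes on $\mathcal{I}_g(k+1)\subset\mathcal{I}_g(m)$ (in the relevant range $m\le k+1$), this homomorphism descends to $\mathcal{I}_g(m)/\mathcal{I}_g(k+1)$, as claimed.

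There is essentially no genuine obstacle here, which is why the statement is billed as a direct corollary; the only thing requiring care is keeping the index ranges of the two cited facts compatible. The vanishing result of \cite{gl} provides the normalization $v^*|_{\mathcal{I}_g(k+1)}=0$, while Levine's additivity supplies both the coset-invariance needed for part one (through the choice $k_1=1$) and the additivity needed for part two (through $k_1=k_2=m$). The single numerical hypothesis $2m>k$ is exactly what upgrades the well-defined set map into a genuine homomorphism.
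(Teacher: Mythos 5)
Your proof is correct and is exactly the argument the paper intends: the paper states this as an immediate consequence of Levine's additivity (applied with $k_1=1,\ k_2=k+1$ for coset-invariance and $k_1=k_2=m$ for additivity) together with the Garoufalidis--Levine vanishing on $\mathcal{I}_g(k+1)$, and offers no further detail. Your verification of the hypothesis $k_1+k_2>k$ in each application and the remark on the range $m\le k+1$ fill in precisely what the paper leaves implicit.
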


Since the Ohtsuki invariant $\lambda_k$ is of finite type $2k$, it induces a homomorphism
$$
\lambda^*_{k}: \mathcal{I}_g(k+1)/\mathcal{I}_g(2k+1)\rightarrow\Q.
$$

\begin{remark}
If we put $k=1$ here, then we obtain that
$$
\lambda^*: \mathcal{I}_g(2)/\mathcal{I}_g(3)\rightarrow\Q
$$
is a homomorphism. However, this follows from a fact already proved in \cite{morita89}
because $\mathcal{I}_g(2)$ is a finite index subgroup of $\mathcal{K}_g$
by Johnson as mentioned above.
\end{remark}

In view of Corollary \ref{cor:levine}, it should be meaningful to consider
abelian quotients of the group $\mathcal{I}_g(k)$. Here we recall known abelian
quotients of a larger group $\mathcal{M}_g(k) \supset \mathcal{I}_g(k)$.
First, we have the $k$-th Johnson homomorphism
$$
\tau_g(k): \mathcal{M}_g(k)\rightarrow \mathfrak{h}_g(k)
$$
and secondly we have its lift
\begin{equation}
\tilde{\tau}_g(k):\mathcal{M}_g(k)\rightarrow \bigoplus_{i=k}^{2k-1} \mathfrak{h}_g(i)
\label{eq:ttau}
\end{equation}
defined as follows. In \cite{morita93}, a homomorphism
$$
\tilde{\tau}_{g,1}(k): \mathcal{M}_{g,1}(k)\rightarrow H_3(N_k(\pi_1\Sigma_g^0))\quad
(\Sigma_g^0=\Sigma_g\setminus\mathrm{Int}\, D^2)
$$
was defined which is a refinement of $\tau_{g,1}(k)$.
Heap \cite{heap} studied this homomorphism by giving a geometric construction of it
and, in particular, proved that $\mathrm{Ker}\,\tilde{\tau}_{g,1}(k)=\mathcal{M}_{g,1}(2k)$. 
Comparing his result with the description of $H_3(N_k (\pi_1 \Sigma_g^0))$ by Igusa-Orr \cite{igusaorr}, we have an embedding  
$$
\tilde{\tau}_{g,1}(k): \mathcal{M}_{g,1}(k)/\mathcal{M}_{g,1}(2k)\hookrightarrow
\bigoplus_{i=k}^{2k-1} \mathfrak{h}_{g,1}(i),
$$
though the direct sum decomposition is not canonical except for the lowest part $i=k$. 
Massuyeau \cite{massuyeau} (see also Habiro-Massuyeau \cite{habiromassuyeau}) 
further studied this homomorphism by an infinitesimal approach. 
The above homomorphism \eqref{eq:ttau} is obtained from this by passing from
$\mathcal{M}_{g,1}, \mathfrak{h}_{g,1}$ to $\mathcal{M}_{g}, \mathfrak{h}_{g}$
along the lines described in \cite{morita99}\cite{mss4}.
Here, if we add the homomorphism $d_1$ and modding out the smaller subgroup
$\mathcal{I}_g(2k)\subset \mathcal{M}_{g}(2k)$, then we obtain a homomorphism 
\begin{equation}
(d_1,\tilde{\tau}_{g}(k)): \mathcal{M}_{g}(k)/\mathcal{I}_{g}(2k)\rightarrow
\Z\oplus \bigoplus_{i=k}^{2k-1} \mathfrak{h}_{g}(i)\quad (k\geq 2)
\label{eq:dttau}
\end{equation}
which is conjecturally an embedding modulo torsion elements (see Remark \ref{rem:aftercor16}). 
As far as the authors understand, this homomorphism $(d_1,\tilde{\tau}_{g}(k))$ gives the known
largest free abelian quotient of the group $\mathcal{M}_{g}(k)$ for $k\geq 2$.
The case $k=2$ gives an abelian quotient
$$
(d_1,\tilde{\tau}_g(2)):\mathcal{M}_g(2)=\mathcal{K}_g\rightarrow \Z\oplus \mathfrak{h}_{g}(2)\oplus\mathfrak{h}_{g}(3)
=\Z\oplus [2^2]\oplus [31^2]
$$
which is rationally surjective. 

Before considering the cases of $k=3,4$, here we prove Theorem \ref{th:kab}
which is equivalent to the statement that the above homomorphism gives the whole rational 
abelianization of $\mathcal{K}_g$.

\begin{proof}[Proof of Theorem \ref{th:kab}] 
Dimca, Hain and Papadima \cite[Theorem C]{dhp} proved that there is an isomorphim
$$
H_1(\mathcal{K}_g;\Q)\cong H_1([\mathrm{Gr}\,\mathfrak{t}_g,\mathrm{Gr}\,\mathfrak{t}_g]).
$$
Since the projection to the degree $1$ part
$
\mathrm{Gr}\,\mathfrak{t}_g\rightarrow \mathfrak{t}_g(1)=[1^3]
$
gives the abelianization of $\mathrm{Gr}\,\mathfrak{t}_g$,
we have
$$
[\mathrm{Gr}\,\mathfrak{t}_g,\mathrm{Gr}\,\mathfrak{t}_g]=\bigoplus_{k=2}^\infty \mathfrak{t}_g(k).
$$
Hence we can write
$$
H_1([\mathrm{Gr}\,\mathfrak{t}_g,\mathrm{Gr}\,\mathfrak{t}_g])=\bigoplus_{k=2}^\infty
H_1([\mathrm{Gr}\,\mathfrak{t}_g,\mathrm{Gr}\,\mathfrak{t}_g])_k.
$$
The results of \cite{hain} and \cite{morita99} imply that
\begin{align*}
H_1([\mathrm{Gr}\,\mathfrak{t}_g,\mathrm{Gr}\,\mathfrak{t}_g])_2&=\mathfrak{t}_g(2)\cong \Q\oplus [2^2]\\
H_1([\mathrm{Gr}\,\mathfrak{t}_g,\mathrm{Gr}\,\mathfrak{t}_g])_3&=\mathfrak{t}_g(3)\cong\mathfrak{m}_g(3)\cong [31^2].
\end{align*}
By the definition of the first homology group of Lie algebras, we can write
$$
H_1([\mathrm{Gr}\,\mathfrak{t}_g,\mathrm{Gr}\,\mathfrak{t}_g])_4
=\mathrm{Coker}\left(\wedge^2 \mathfrak{t}_g(2)\overset{[\ ,\ ]}{\rightarrow} \mathfrak{t}_g(4)\right).
$$
On the other hand, it was proved in \cite{sakasai2} that the homomorphism
$$
[\ ,\ ]: \wedge^2 \mathfrak{m}_g(2)\rightarrow \mathfrak{m}_g(4)
$$ 
is surjective. Here we use the case $k=4$ of Theorem \ref{th:main},
namely the fact that $\mathfrak{t}_g(4)\cong \mathfrak{m}_g(4)$. 
This is the key point of our proof of Theorem \ref{th:kab}. 
Then we conclude that the homomorphism 
$[\ ,\ ]: \wedge^2 \mathfrak{t}_g(2)\rightarrow \mathfrak{t}_g(4)$ is also surjective. It follows that
$$
H_1([\mathrm{Gr}\,\mathfrak{t}_g,\mathrm{Gr}\,\mathfrak{t}_g])_4=0.
$$
To finish the proof, it remains to prove that
$$
H_1([\mathrm{Gr}\,\mathfrak{t}_g,\mathrm{Gr}\,\mathfrak{t}_g])_k=0
$$
for all $k\geq 5$. In other words, we have to prove that the homomorphism
\begin{equation}
\bigoplus_{i+j=k, i, j>1} \mathfrak{t}_g(i)\otimes \mathfrak{t}_g(j)\rightarrow \mathfrak{t}_g(k)
\label{eq:bracket}
\end{equation}
induced by the bracket operation is surjective. Note that the above homomorphism
\eqref{eq:bracket} is surjective for $k=4$ as already mentioned above. 
We use induction on $k\geq 4$. Assuming that \eqref{eq:bracket} is surjective for $k$, 
we prove the surjectivity for $k+1$. Since the Torelli Lie algebra is generated
by its degree $1$ part $\mathfrak{t}_g(1)$, if we delete the condition $i, j >1$ on the 
left hand side of \eqref{eq:bracket}, then this map is surjective. Hence it is enough to show that
any element of the form
$$
[\alpha, \xi]\in \mathfrak{t}_g(k+1)\quad (\alpha \in \mathfrak{t}_g(1), \xi\in \mathfrak{t}_g(k))
$$
is contained in the image of \eqref{eq:bracket}. By the induction assumption, we can write
$$
\xi=\sum_{s}\, [\beta_s,\gamma_s]\quad (\beta_s \in \mathfrak{t}_g(k_s), 
\gamma_s\in \mathfrak{t}_g(k-k_s), 2\leq k_s\leq k-2).
$$
Then, by the Jacobi identity, we have
\begin{align*}
[\alpha, \xi]&=\sum_{s}\, [\alpha, [\beta_s,\gamma_s]]\\
&=-\sum_{s}\, \left([\beta_s, [\gamma_s,\alpha]]+[\gamma_s, [\alpha,\beta_s]]\right).
\end{align*}
This element is clearly contained in the image of  \eqref{eq:bracket} proving that it is 
surjective for $k+1$.
This completes the proof.
\end{proof}

\begin{remark}
Here we mention the relation between our computation and the statement
\begin{align*}
&H_1(\mathcal{K}_g;\Q)\cong \Q\oplus \bigoplus_{k=0}^\infty\mathrm{Coker} \,q_k\\
& q_k:\mathrm{Sym}^{k-1} [1^3]\otimes \wedge^3 [1^3] \rightarrow \mathrm{Sym}^{k}[1^3] \otimes [2^2]
\end{align*}
given by Dimca, Hain and Papadima \cite[Theorem B]{dhp}. The homomorphism $q_k$ is defined
by
$$
q_k(f\otimes (a\wedge b\wedge c))=fa\otimes \pi(b\wedge c)+fb\otimes \pi(c\wedge a)+fc\otimes \pi(a\wedge b)
$$
where $f\in \mathrm{Sym}^{k-1} [1^3], a,b,c\in [1^3]$
and
$\pi:\wedge^2 [1^3]\cong\wedge^2 \mathfrak{h}_g(1)\rightarrow [2^2]\cong\mathfrak{h}_g(2)$ denotes the bracket operation.

Now the factor $\Q$ is detected by $d_1$
and $\mathrm{Coker}\, q_0=[2^2]$ is detected by the second Johnson homomorphism $\tau_g(2)$.
These two summands correspond to $\mathfrak{t}_g(2)=\Q\oplus [2^2]$ determined by Hain \cite{hain}.
The homomorphism $q_1: \wedge^3 [1^3]\rightarrow [1^3]\otimes [2^2]$ appeared already in
\cite{morita99} (Proposition 6.3) and it was proved that $\mathrm{Coker}\, q_1=\mathfrak{t}_g(3)\cong [31^2]$.
Our computation $H_1([\mathrm{Gr}\,\mathfrak{t}_g,\mathrm{Gr}\,\mathfrak{t}_g])_4=0$
corresponds to the fact that the homomorphism $q_2$ is surjective, namely $\mathrm{Coker}\,q_2=0$.
Then, by the definition of the homomorphisms $q_k$ mentioned above, it is easy to see that they are surjective for 
all $k\geq 3$ as well.
\end{remark}

\begin{proof}[Proof of Corollary \ref{cor:ki}] 
$\mathrm{(i)}\ $ The result of Heap mentioned above implies that $\mathrm{Ker}\,\tilde{\tau}_g(2)=\mathcal{M}_g(4)$.
On the other hand, the case $k=4$ of Corollary \ref{cor:fi} shows that $\mathcal{I}_g(4)$
is a finite index subgroup of 
$$
\mathrm{Ker}(d_1: \mathcal{M}_g(4)\rightarrow \Q)=\mathrm{Ker}\, (d_1,\tilde{\tau}_2).
$$
Now Theorem \ref{th:kab} implies that 
$$
\mathrm{Ker}\, (d_1,\tilde{\tau}_2)/[\mathcal{K}_g,\mathcal{K}_g]\cong
\mathrm{Torsion} (H_1(\mathcal{K}_g;\Z)).
$$
According to Ershov-He \cite{eh} and Church-Ershov-Putman \cite{cep},
$\mathcal{K}_g$ is finitely generated. It follows that the torsion subgroup
$\mathrm{Torsion} (H_1(\mathcal{K}_g;\Z))$ of $H_1(\mathcal{K}_g;\Z)$ is a finite group
and hence $[\mathcal{K}_g,\mathcal{K}_g]$ is a finite index subgroup of $\mathrm{Ker}\, (d_1,\tilde{\tau}_2)$.
Thus both the groups $\mathcal{I}_g(4)$ and $[\mathcal{K}_g,\mathcal{K}_g]$ are
finite index subgroups of the same group $\mathrm{Ker}\, (d_1,\tilde{\tau}_2)$.
Therefore they are commensurable.

$\mathrm{(ii)}\ $ According to \cite{cep}, $\mathcal{I}_g(4)$ is finitely generated for $g\geq 7$.
On the other hand, $\mathcal{I}_g(4)$ is a finite index subgroup of $\mathrm{Ker}\, (d_1,\tilde{\tau}_2)$
as shown in $\mathrm{(i)}\ $. It follows that $\mathrm{Ker}\, (d_1,\tilde{\tau}_2)$ is finitely generated.
Since $[\mathcal{K}_g,\mathcal{K}_g]$ is a finite index subgroup of $\mathrm{Ker}\, (d_1,\tilde{\tau}_2)$
as above, we conclude that it is also finitely generated.

This finishes the proof.
\end{proof}

\begin{problem}
For a given $k\geq 3$,
determine whether the rationally surjective homomorphisms
\begin{align*}
(d_1,\tilde{\tau}_{g}(k))&: \mathcal{M}_{g}(k)\rightarrow
\Q\oplus\bigoplus_{i=k}^{2k-1} \mathfrak{m}_{g}(i)\\
\tilde{\tau}_{g}(k)&: \mathcal{I}_{g}(k)\rightarrow
\bigoplus_{i=k}^{2k-1} \mathfrak{m}_{g}(i)
\end{align*}
give the whole of $H_1(\mathcal{M}_{g}(k);\Q)$ and $H_1(\mathcal{I}_{g}(k);\Q)$
or not.
\end{problem}

In view of the result of \cite{cep} that $\mathcal{M}_{g}(k)$ and $\mathcal{I}_{g}(k)$
are all finitely generated for $g\geq 2k-1$, a positive solution 
to the above problem would imply that the subgroups
$[\mathcal{M}_{g}(k),\mathcal{M}_{g}(k)], [\mathcal{I}_{g}(k),\mathcal{I}_{g}(k)],
\mathcal{I}_g(2k)$ of the Torelli group $\mathcal{I}_g$ are commensurable
for $g\geq 4k-1$.
It would then follow that the groups 
$[\mathcal{M}_{g}(k),\mathcal{M}_{g}(k)], [\mathcal{I}_{g}(k),\mathcal{I}_{g}(k)]$ are
finitely generated in the same range.

Now we go back to the homomorphism \eqref{eq:dttau} for the cases $k=3,4$.
\begin{prop}
There are isomorphisms
$$
(d_1,p\circ \tilde{\tau}_g(3)): (\mathcal{M}_g(3)/\mathcal{I}_g(5))\otimes\Q\cong 
\Q\oplus\mathfrak{m}_g(3)\oplus \mathfrak{m}_g(4),
$$
$$
(d_1,q\circ \tilde{\tau}_g(4)):(\mathcal{M}_g(4)/\mathcal{I}_g(7))\otimes\Q\cong 
\Q\oplus\mathfrak{m}_g(4)\oplus \mathfrak{m}_g(5)\oplus \mathfrak{m}_g(6)
$$
where $p, q$ are the projections
$$
p:\mathfrak{m}_g(3)\oplus \mathfrak{m}_g(4)\oplus \mathfrak{m}_g(5)\rightarrow
\mathfrak{m}_g(3)\oplus \mathfrak{m}_g(4),
$$
$$
q:\mathfrak{m}_g(4)\oplus \mathfrak{m}_g(5)\oplus \mathfrak{m}_g(6)\oplus \mathfrak{m}_g(7)\rightarrow
\mathfrak{m}_g(4)\oplus \mathfrak{m}_g(5)\oplus \mathfrak{m}_g(6).
$$
\label{prop:tildet}
\end{prop}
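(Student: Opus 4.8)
The plan is to prove both isomorphisms by the same device: realise each left-hand side as a finite-dimensional filtered $\Q$-vector space whose successive quotients are controlled by the Johnson filtration, and then check that $(d_1,p\circ\tilde{\tau}_g(3))$, respectively $(d_1,q\circ\tilde{\tau}_g(4))$, induces an isomorphism on every associated graded piece. Since a filtered map between finite-dimensional vector spaces that is an isomorphism on the associated graded is itself an isomorphism, this will suffice; the whole argument thus reduces to a graded bookkeeping together with the identification of the top piece.

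First I would treat the first statement. Since $\mathcal{I}_g(5)\subset\mathcal{M}_g(5)\subset\mathcal{M}_g(4)\subset\mathcal{M}_g(3)$, the nilpotent group $\mathcal{M}_g(3)/\mathcal{I}_g(5)$ carries the filtration by the images of $\mathcal{M}_g(4)$ and $\mathcal{M}_g(5)$. Tensoring the resulting short exact sequences with $\Q$ gives
\[
0\to(\mathcal{M}_g(4)/\mathcal{I}_g(5))\otimes\Q\to(\mathcal{M}_g(3)/\mathcal{I}_g(5))\otimes\Q\to\mathfrak{m}_g(3)\to0
\]
and
\[
0\to(\mathcal{M}_g(5)/\mathcal{I}_g(5))\otimes\Q\to(\mathcal{M}_g(4)/\mathcal{I}_g(5))\otimes\Q\to\mathfrak{m}_g(4)\to0.
\]
By the case $k=5$ of Corollary \ref{cor:fi} (which rests on Theorem \ref{th:main}), $\mathcal{I}_g(5)$ is of finite index in $\mathrm{Ker}(d_1:\mathcal{M}_g(5)\to\Z)$, so $(\mathcal{M}_g(5)/\mathcal{I}_g(5))\otimes\Q\cong\Q$ and is detected by $d_1$. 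Hence the three associated graded pieces are $\mathfrak{m}_g(3)$, $\mathfrak{m}_g(4)$ and $\Q$, and the source has the same dimension as the target $\Q\oplus\mathfrak{m}_g(3)\oplus\mathfrak{m}_g(4)$.

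It then remains to identify the map on the graded. Here I would invoke the refinement property of $\tilde{\tau}_g(3)$: its lowest component is the Johnson homomorphism $\tau_g(3)$, and by Heap's result $\mathrm{Ker}\,\tilde{\tau}_g(3)=\mathcal{M}_g(6)$, so on $\mathcal{M}_g(5)$ the map $\tilde{\tau}_g(3)$ lands purely in the top summand $\mathfrak{m}_g(5)$, which is annihilated by $p$. Thus $p\circ\tilde{\tau}_g(3)$ vanishes on $\mathcal{M}_g(5)\supset\mathcal{I}_g(5)$ and descends, and on the two lower graded pieces it induces $\tau_g(3):\mathfrak{m}_g(3)\xrightarrow{\cong}\mathfrak{m}_g(3)$ and $\tau_g(4):\mathfrak{m}_g(4)\xrightarrow{\cong}\mathfrak{m}_g(4)$, while $d_1$ induces an isomorphism $\Q\xrightarrow{\cong}\Q$ on the top piece. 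Therefore $(d_1,p\circ\tilde{\tau}_g(3))$ is an isomorphism. The second statement is proved identically, filtering $\mathcal{M}_g(4)/\mathcal{I}_g(7)$ by the images of $\mathcal{M}_g(5),\mathcal{M}_g(6),\mathcal{M}_g(7)$, using the case $k=7$ of Corollary \ref{cor:fi} for the top $\Q$, and the refinement property of $\tilde{\tau}_g(4)$, whose associated graded recovers $\tau_g(4),\tau_g(5),\tau_g(6)$, on the remaining pieces.

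The main obstacle is the identification of the top graded piece as exactly $\Q$, detected by $d_1$: this is where Theorem \ref{th:main} is indispensable, since it is precisely the vanishing $\mathfrak{i}_g(k)=0$ for $k=4,5,6$ that forces the rational difference between $\mathcal{M}_g(k)$ and $\mathcal{I}_g(k)$ at the relevant top degrees to be one-dimensional and carried by $d_1$. The remaining ingredients, namely that the intermediate graded pieces are the $\mathfrak{m}_g(i)$ and that the associated graded of $\tilde{\tau}$ consists of the Johnson homomorphisms, are formal once the refinement results of Heap and Igusa--Orr are invoked.
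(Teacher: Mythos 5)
Your proof is correct and follows essentially the same route as the paper, whose one-sentence proof combines exactly the two ingredients you use: Corollary \ref{cor:fi} for $k=5,7$ to identify the top graded piece with $\Q$ detected by $d_1$, and the refinement homomorphism \eqref{eq:dttau} whose associated graded recovers the Johnson homomorphisms on the intermediate pieces. The only loose point is your claim that $\tilde{\tau}_g(3)$ restricted to $\mathcal{M}_g(5)$ lands in the top summand ``by Heap's result'': the kernel computation $\mathrm{Ker}\,\tilde{\tau}_g(3)=\mathcal{M}_g(6)$ alone does not give this, rather it is the filtered structure of the Igusa--Orr description (precisely the property encoded in \eqref{eq:dttau}) that does, but this is the same fact the paper invokes.
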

\begin{proof}
This follows by combining the facts that 
$\mathcal{I}_g(5), \mathcal{I}_g(7)$ are
finite index subgroups of the kernel of the homomorphism
$d_1$ on $\mathcal{M}_g(5), \mathcal{M}_g(7)$, respectively
(see Corollary \ref{cor:fi})
and the above homomorphism \eqref{eq:dttau}.
\end{proof}

By using Corollary \ref{cor:levine} and
Proposition \ref{prop:tildet}, we obtain the following.  

\begin{thm}
Let $v$ be a rational invariant of homology spheres of finite type $4$,
including the second Ohtsuki invariant $\lambda_2$, and
let $v^*:\mathcal{I}_g\rightarrow\Q$ be the associated mapping.
Then we have the following.
\par
$\mathrm{(i)}\, $ The restriction $v^*:\mathcal{I}_g(3)\rightarrow\Q$, which is a homomorphism, 
is a quotient of the homomorphism
$$
p \circ\tilde{\tau}_g(3):\mathcal{I}_g(3)\rightarrow \bigoplus_{i=3}^{5} \mathfrak{m}_g(i)
\overset{p}{\rightarrow} \mathfrak{m}_g(3)\oplus \mathfrak{m}_g(4)
$$
where $p$ denotes the natural projection.
\par
$\mathrm{(ii)}\, $ The restriction map $v^*:\mathcal{M}_g(3)\rightarrow\Q$ factors through  
the homomorphism 
$$
(d_1,p\circ \tilde{\tau}_g(3)):\mathcal{M}_g(3)\rightarrow \Q\oplus \mathfrak{m}_g(3)\oplus \mathfrak{m}_g(4).
$$
\par
$\mathrm{(iii)}\, $ The restriction map $v^*:\mathcal{M}_g(5)\rightarrow\Q$ factors through 
the homomorphism 
$$
d_1:\mathcal{M}_g(5)\rightarrow \Q.
$$
\label{th:type4}
\end{thm}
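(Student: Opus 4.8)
The plan is to study $v^*$ successively on the three groups $\mathcal{I}_g(3)$, $\mathcal{M}_g(3)$ and $\mathcal{M}_g(5)$, feeding in two inputs from finite type theory together with the structural Proposition \ref{prop:tildet}. The finite type inputs are: first, since $v$ has finite type $4$, the Garoufalidis--Levine vanishing shows $v^*$ is trivial on $\mathcal{I}_g(5)$, so that $v^*$ descends to $\mathcal{I}_g/\mathcal{I}_g(5)$ and, on any subgroup containing $\mathcal{I}_g(5)$, to a function on its quotient by $\mathcal{I}_g(5)$; second, Corollary \ref{cor:levine} with $k=4$ and $m=3$ (so $2m>k$) guarantees that the restriction $v^*\colon \mathcal{I}_g(3)/\mathcal{I}_g(5)\to\Q$ is a homomorphism. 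The decisive structural input is Proposition \ref{prop:tildet}, whose validity rests on Theorem \ref{th:main}: it asserts that $(d_1,p\circ\tilde\tau_g(3))$ is a rational isomorphism of $\mathcal{M}_g(3)/\mathcal{I}_g(5)$ onto $\Q\oplus\mathfrak{m}_g(3)\oplus\mathfrak{m}_g(4)$.

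For (i) I would first observe that $\mathcal{I}_g(3)/\mathcal{I}_g(5)$ is abelian, because $[\mathcal{I}_g(3),\mathcal{I}_g(3)]\subseteq\mathcal{I}_g(6)\subseteq\mathcal{I}_g(5)$, so rationally it is the vector space $\mathfrak{t}_g(3)\oplus\mathfrak{t}_g(4)$. Since $d_1$ vanishes on $\mathcal{I}_g(3)$, the isomorphism of Proposition \ref{prop:tildet} restricts to an isomorphism $p\circ\tilde\tau_g(3)\colon(\mathcal{I}_g(3)/\mathcal{I}_g(5))\otimes\Q\xrightarrow{\cong}\mathfrak{m}_g(3)\oplus\mathfrak{m}_g(4)$. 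As $v^*$ is a $\Q$-valued homomorphism on this abelian group, it is a linear functional, hence of the form $\ell\circ p\circ\tilde\tau_g(3)$ for some $\ell\in(\mathfrak{m}_g(3)\oplus\mathfrak{m}_g(4))^\ast$; this is precisely the statement that $v^*$ is a quotient of $p\circ\tilde\tau_g(3)$.

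For (ii), the restriction $v^*\colon\mathcal{M}_g(3)\to\Q$ likewise descends to $\mathcal{M}_g(3)/\mathcal{I}_g(5)$ (using $\mathcal{I}_g(5)\subseteq\mathcal{M}_g(5)\subseteq\mathcal{M}_g(3)$ and the vanishing on $\mathcal{I}_g(5)$), and Proposition \ref{prop:tildet} makes $(d_1,p\circ\tilde\tau_g(3))$ injective on this group modulo torsion. Hence $v^*$ is determined by the values of $d_1$ and $p\circ\tilde\tau_g(3)$, i.e. factors through $(d_1,p\circ\tilde\tau_g(3))$. Part (iii) then follows by restriction: on $\mathcal{M}_g(5)$ the lowest component $\tau_g(3)$ of $\tilde\tau_g(3)$ already vanishes (it is trivial on $\mathcal{M}_g(4)\supseteq\mathcal{M}_g(5)$), while $\tilde\tau_g(3)$ sends $\mathcal{M}_g(5)=\mathcal{M}_g(2\cdot3-1)$ into the top graded piece $\mathfrak{m}_g(5)$, which is discarded by $p$; thus $p\circ\tilde\tau_g(3)\equiv 0$ on $\mathcal{M}_g(5)$, and the factorization from (ii) collapses to dependence on $d_1$ alone.

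The main obstacle is in (ii): in contrast to $\mathcal{I}_g(3)$, the map $v^*$ is not a homomorphism on $\mathcal{M}_g(3)$, since Levine's additivity requires the two factors to satisfy $k_1+k_2>4$ whereas a general element of $\mathcal{M}_g(3)$ lies only in $\mathcal{I}_g(1)$. One therefore cannot argue by linearity; the content is that the single extra direction transverse to $\mathcal{I}_g(3)$ is faithfully detected by $d_1$, so that $v^*$ --- possibly a genuinely nonlinear function of the $d_1$-coordinate --- is nonetheless a function of $(d_1,p\circ\tilde\tau_g(3))$. Making this rigorous amounts to verifying that $v^*$ is constant on the finite torsion fibers of the map in Proposition \ref{prop:tildet}, which I would address by working modulo torsion throughout, consistent with the rational nature of the statement.
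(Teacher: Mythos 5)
Your argument is correct and follows the same route as the paper, whose entire proof of this theorem is the single sentence that it follows from Corollary \ref{cor:levine} and Proposition \ref{prop:tildet}; you have simply made the two inputs (Garoufalidis--Levine vanishing on $\mathcal{I}_g(5)$ plus Levine additivity, and the rational isomorphism of Proposition \ref{prop:tildet}) and their interaction explicit. The torsion subtlety you flag in parts (ii) and (iii) is likewise left implicit in the paper, so your treatment is, if anything, the more careful one.
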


Note that the restriction maps $v^*:\mathcal{M}_g(3)\rightarrow\Q$ and $v^*:\mathcal{M}_g(5)\rightarrow\Q$ 
might not be homomorphisms. 

\begin{thm}
Let $v$ be a rational invariant of homology spheres of finite type $6$,
including the third Ohtsuki invariant $\lambda_3$, and
let $v^*:\mathcal{I}_g\rightarrow\Q$ be the associated mapping.
Then we have the following.
\par
$\mathrm{(i)}\, $ The restriction $v^*:\mathcal{I}_g(4)\rightarrow\Q$, which is a homomorphism, 
is a quotient of the homomorphism 
$$
q \circ\tilde{\tau}_g(4):\mathcal{I}_g(4)\rightarrow \bigoplus_{i=4}^{7} \mathfrak{m}_g(i)
\overset{q}{\rightarrow} \mathfrak{m}_g(4)\oplus \mathfrak{m}_g(5)\oplus \mathfrak{m}_g(6)
$$
where $q$ denotes the natural projection.
\par
$\mathrm{(ii)}\, $ The restriction map $v^*:\mathcal{M}_g(4)\rightarrow\Q$ factors through 
the homomorphism 
$$
(d_1,q\circ \tilde{\tau}_g(4)):\mathcal{M}_g(4)\rightarrow \Q\oplus \mathfrak{m}_g(4)\oplus \mathfrak{m}_g(5)
\oplus \mathfrak{m}_g(6).
$$
\par
$\mathrm{(iii)}\, $ The restriction map $v^*:\mathcal{M}_g(7)\rightarrow\Q$ factors through 
the homomorphism 
$$
d_1:\mathcal{M}_g(7)\rightarrow \Q.
$$
\label{th:type6}
\end{thm}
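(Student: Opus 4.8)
The plan is to follow the template of Theorem \ref{th:type4}, with the role of finite type $4$ and the $k=3$ case of Proposition \ref{prop:tildet} replaced by finite type $6$ and the $k=4$ case of Proposition \ref{prop:tildet}. The starting point is that, since $v$ has finite type $6$, the result of Garoufalidis--Levine guarantees that $v^*$ vanishes on $\mathcal{I}_g(7)$, so by Corollary \ref{cor:levine} it descends to a mapping $v^*:\mathcal{I}_g/\mathcal{I}_g(7)\to\Q$ whose restriction to $\mathcal{I}_g(m)/\mathcal{I}_g(7)$ is a genuine homomorphism whenever $2m>6$, that is, for $m\geq 4$.

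For part (i), I would use that $m=4$ satisfies $2m>6$, so $v^*:\mathcal{I}_g(4)/\mathcal{I}_g(7)\to\Q$ is a homomorphism. Since $d_1$ vanishes on $\mathcal{I}_g(k)$ for all $k\geq 3$, the $d_1$-component of the isomorphism in Proposition \ref{prop:tildet} is trivial on $\mathcal{I}_g(4)$, so $q\circ\tilde{\tau}_g(4)$ already gives a rational isomorphism from $(\mathcal{I}_g(4)/\mathcal{I}_g(7))\otimes\Q$ onto $\mathfrak{m}_g(4)\oplus\mathfrak{m}_g(5)\oplus\mathfrak{m}_g(6)$ (the ranks match by Theorem \ref{th:main}). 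A homomorphism to the torsion-free group $\Q$ automatically kills torsion, so $v^*$ factors as a linear functional composed with $q\circ\tilde{\tau}_g(4)$; this is precisely the statement that $v^*$ is a quotient of $q\circ\tilde{\tau}_g(4)$.

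For part (ii), I would observe that $v^*$, being induced on $\mathcal{I}_g/\mathcal{I}_g(7)$ and $\mathcal{M}_g(4)\subset\mathcal{I}_g$, factors through the finitely generated nilpotent group $\mathcal{M}_g(4)/\mathcal{I}_g(7)$; by the $k=4$ case of Proposition \ref{prop:tildet} the homomorphism $(d_1,q\circ\tilde{\tau}_g(4))$ induces a rational isomorphism of this group onto $\Q\oplus\mathfrak{m}_g(4)\oplus\mathfrak{m}_g(5)\oplus\mathfrak{m}_g(6)$, so the value of $v^*$ is determined by the values of $d_1$ and $q\circ\tilde{\tau}_g(4)$. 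Part (iii) I would then deduce from (ii): on the subgroup $\mathcal{M}_g(7)\subset\mathcal{M}_g(4)$ the lift $\tilde{\tau}_g(4)$ takes values only in the top piece $\mathfrak{m}_g(7)$, since its degree $4,5,6$ Johnson components vanish on $\mathcal{M}_g(7)\subset\mathcal{M}_g(5)$ and $\ker\tilde{\tau}_g(4)=\mathcal{M}_g(8)$. As the projection $q$ annihilates $\mathfrak{m}_g(7)$, we get $q\circ\tilde{\tau}_g(4)|_{\mathcal{M}_g(7)}=0$, so the factorization of (ii) collapses to a factorization through $d_1$ alone.

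The main obstacle I anticipate is the precise meaning of ``factors through'' in (ii) and (iii), because $v^*$ on $\mathcal{M}_g(4)$ and $\mathcal{M}_g(7)$ is only a mapping, not a homomorphism (as the caveat following Theorem \ref{th:type4} stresses). Since $(d_1,q\circ\tilde{\tau}_g(4))$ is injective only modulo torsion on $\mathcal{M}_g(4)/\mathcal{I}_g(7)$, one must check that $v^*$ is constant on the finite torsion fibres, i.e. that it cannot distinguish two elements differing by a torsion class. I expect to handle this by combining Levine's additivity (the proposition preceding Corollary \ref{cor:levine}) with the finite-index statement of Corollary \ref{cor:fi}, which forces such a torsion class to have a power lying in $\mathcal{I}_g(4)$, where $v^*$ is a homomorphism and hence insensitive to torsion; controlling the interaction between the $d_1$-direction and the Johnson directions is the delicate point.
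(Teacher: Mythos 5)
Your argument follows the paper's route exactly: the paper's entire proof of Theorem \ref{th:type6} consists of invoking Corollary \ref{cor:levine} and the $k=4$ case of Proposition \ref{prop:tildet}, and your treatment of (i) and your reduction of (iii) to (ii) (via the vanishing of $q\circ\tilde{\tau}_g(4)$ on $\mathcal{M}_g(7)$) supply the intended details correctly.

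The one place where you go beyond the paper --- the torsion issue in (ii) --- is indeed the delicate step, and the paper is silent about it; but the patch you sketch does not close it. Knowing that a torsion class $\eta$ of $\mathcal{M}_g(4)/\mathcal{I}_g(7)$ has a power lying in $\mathcal{I}_g(4)$ (indeed in $\mathcal{I}_g(7)$, which is what torsion means here) does not place $\eta$ itself in any deep term of the lower central series; and even if one knew $\eta\in\mathcal{I}_g(4)$, Levine's additivity $v(M_{\varphi\eta})=v(M_{\varphi})+v(M_{\eta})$ requires $\varphi\in\mathcal{I}_g(k_1)$ with $k_1+4>6$, which fails for a general $\varphi\in\mathcal{M}_g(4)$, a priori only in $\mathcal{I}_g(1)$. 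Two ways to finish: (a) read ``factors through'' rationally, i.e.\ through $(\mathcal{M}_g(4)/\mathcal{I}_g(7))\otimes\Q$, which is literally what Proposition \ref{prop:tildet} provides and is evidently what the paper intends; or (b) show $v^*$ genuinely is constant on torsion cosets by observing that for $\eta^N\in\mathcal{I}_g(7)$ the function $n\mapsto v^*(\varphi\eta^n)$ is $N$-periodic (Levine's lemma with $k_1=1$, $k_2=7$, together with the vanishing of $v^*$ on $\mathcal{I}_g(7)$), while the Garoufalidis--Levine alternating-sum characterization of type $6$ forces its $7$-th finite difference to vanish, so it is a polynomial in $n$; a periodic polynomial is constant. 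Either repair is short, but as written your final paragraph does not yet constitute a proof of (ii).
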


Note also that the restriction maps 
$v^*:\mathcal{M}_g(4)\rightarrow\Q$ and $v^*:\mathcal{M}_g(7)\rightarrow\Q$ 
might not be homomorphisms. 

\begin{problem}
Determine the precise formulae for the expressions of $\lambda^*_2, \lambda^*_3$
in terms of $d_1$ on the subgroups $\mathcal{M}_g(5), \mathcal{M}_g(7)$, respectively.
Recall here that $\lambda^*=\frac{1}{24} d_1$ on $\mathcal{M}_g(3)$
(Theorem \ref{th:core}) 
and based on this, we call $d_1$ the core of the Casson invariant. It seems reasonable to
imagine that $\lambda^*_2, \lambda^*_3$ are constant times $d^2_1, d^3_1$, respectively,
including the cases where the constant vanishes.
\end{problem}

%

\vspace{3mm}
\noindent
{\bf Acknowledgements}
\hspace{7mm}
The authors would like to express their hearty thanks to \\
Richard Hain, Gw\'ena\"el Massuyeau, Masatoshi Sato and Shunsuke Tsuji 
for enlightening discussions. 
Also, they would like to thank Joan Birman, Robion Kirby and Sheila Newbery for
their great assistance and help on the notes \cite{jnote}.
Finally, they appreciate the referee's careful reading of the previous version and helpful comments. 

\bibliographystyle{amsplain}

\end{document}